\numberwithin{figure}{section}
\numberwithin{equation}{section}
\newtheorem{theorem}{Theorem}[section]
\newtheorem{lemma}[theorem]{Lemma}
\newtheorem{proposition}[theorem]{Proposition}
\newcommand{\Rm}[1]{
  \textup{\uppercase\expandafter{\romannumeral#1}}
}
\let\olddefinition\definition
\renewcommand{\definition}{\olddefinition\normalfont}
\let\oldremark\remark
\renewcommand{\remark}{\oldremark\normalfont}
\newcommand{\C}{\mathbb{C}}
\newcommand{\F}{\mathcal{F}}
\newcommand{\M}{\mathcal{M}}
\newcommand{\med}{\mathrm{med}}
\newcommand{\n}{\mathbf{n}}
\newcommand{\N}{\mathbb{N}}
\newcommand{\Nc}{\mathcal{N}}
\renewcommand{\O}{\mathcal{O}}
\newcommand{\R}{\mathbb{R}}
\newcommand{\Rc}{\mathcal{R}}
\newcommand{\Tb}{\mathbf{T}}
\newcommand{\ub}{\mathbf{u}}
\newcommand{\x}{\mathbf{x}}
\newcommand{\s}{\mathbf{s}}
\renewcommand{\S}{\mathcal{S}}
\newcommand{\Z}{\mathbb{Z}}
\newcommand{\etab}{\boldsymbol{\eta}}
\renewcommand{\Mc}{\mathcal{M}}
\newcommand{\cutoff}{\mathfrak{b}}
\newcommand{\cutoffxi}{\mathfrak{d}}
\def\vp{\varphi}
\def\ve{\varepsilon}
\def\px{\partial_x}
\newcommand{\diff}{\,\mathrm{d}}
\DeclareMathOperator{\sgn}{\mathrm{sgn}\,}
\def\bel{\begin{equation}\label}
\def\beq{\begin{equation}}
\def\eeq{\end{equation}}
\def\bega{\begin{array}}
\def\enda{\end{array}}
\renewcommand{\vec}[1]{\mathbf{#1}}
\newcommand\reallywidehat[1]{%
\savestack{\tmpbox}{\stretchto{%
  \scaleto{%
    \scalerel*[\widthof{\ensuremath{#1}}]{\kern-.6pt\bigwedge\kern-.6pt}%
    {\rule[-\textheight/2]{1ex}{\textheight}}
  }{\textheight}%
}{0.5ex}}%
\stackon[1pt]{#1}{\tmpbox}%
}
\author{John K. Hunter}
\address{Department of Mathematics, University of California at Davis}
\email{jkhunter@ucdavis.edu}
\thanks{JKH was supported by the NSF under grant number DMS-1616988 and DMS-1908947}
\author{Jingyang Shu}
\address{Department of Mathematics, University of California at Davis}
\email{jyshu@ucdavis.edu}
\author{Qingtian Zhang}
\address{Department of Mathematics, West Virginia University}
\email{qingtian.zhang@mail.wvu.edu}
\title[GSQG front equations]{
Global solutions for a family of GSQG front equations}
\date{\today}
\begin{document}

\begin{abstract}
We prove the global existence of solutions with small and smooth initial data of a nonlinear dispersive equation for the motion of generalized surface quasi-geostrophic (GSQG) fronts in a parameter regime $1<\alpha<2$, where $\alpha=1$ corresponds to the SQG equation and $\alpha=2$ corresponds to the incompressible Euler equations. This result completes previous global well-posedness results for $0<\alpha \le 1$. We also use contour dynamics to derive the GSQG front equations for $1<\alpha<2$.
\end{abstract}

\maketitle

\tableofcontents

\section{Introduction}

The inviscid generalized surface quasi-geostrophic (GSQG) equation is a two-dimensional transport equation for an active scalar
\begin{align}
& \theta_t(\x, t) + \ub(\x, t) \cdot \nabla \theta(\x, t) = 0,\label{gsqg1}\\
& (- \Delta)^{\alpha / 2}\ub(\x, t) = \nabla^\perp \theta(\x, t),\label{gsqg2}
\end{align}
where $0 < \alpha \leq 2$ is a parameter. Here, the scalar field $\theta \colon \R^2 \times \R_+ \to \R$ is transported by the divergence-free  velocity $\ub \colon \R^2 \times \R_+ \to \R^2$, which is determined nonlocally from $\theta$ by \eqref{gsqg2}, $\x = (x, y)$ is the spatial variable, $\nabla^\perp = (- \partial_y, \partial_x)$, and $(-\Delta)^{\alpha / 2}$ is a
fractional Laplacian.
When $\alpha = 2$, equations \eqref{gsqg1}--\eqref{gsqg2} correspond to the streamfunction-vorticity formulation of the two-dimensional incompressible Euler equations \cite{MB02}.
When $\alpha = 1$, these equations are the surface quasi-geostrophic (SQG) equation, which arises from oceanic and atmospheric science \cite{Lap17, Ped87} and has mathematical similarities with the three-dimensional incompressible Euler equations \cite{CMT94a, CMT94b}.

The transport equation \eqref{gsqg1} is compatible with piecewise-constant solutions for $\theta$, and the simplest class of such solutions is
obtained when $\theta$ takes on two distinct values. As in \cite{HSZ19pa}, we distinguish between different geometries. We refer to patch solutions when one of the values is $0$ and the support of $\theta$ is a simply connected, bounded  set whose boundary is a simple closed curve; we refer to front solutions when the two different values of $\theta$ are taken on in half-spaces whose common boundary is a curve, or front, with infinite length.
In this paper we consider front solutions. The advantage of these solutions over patch solutions is that their boundary geometry is simpler,
especially when the front can be represented as a graph, although the lack of compact support in $\theta$ introduces additional complications in the formulation of front equations.

Zabusky et.~al.~\cite{Zab} introduced contour dynamics  for Euler patches, which leads to a closed equation for the evolution of the patch boundary, and smooth patch solutions of the Euler equations exist globally in time \cite{BeCo93, Che93, Che98}. Similar methods lead to contour dynamics equations for the boundary of SQG and GSQG patches.
Local-in-time patch solutions of the SQG and GSQG equations are analyzed in \cite{CCGSMZ14, CCCGW12, CCG18, Gan08, GP18p, KR20pa, KR20pb, KYZ17, MB02},
but a proof of whether the patch boundaries form singularities in finite time or stay globally smooth is an open question. There is, however, numerical evidence suggesting finite-time singularity formation in SQG and GSQG patches \cite{CFMR05, SD14, SD19} and a proof of finite-time singularity formation for GSQG patches in the presence of a boundary for $\alpha$ strictly less than $2$ and sufficiently close to $2$ \cite{GP18p, KRYZ16}. In addition, some particular global smooth solutions for rotating patches are constructed and studied in \cite{CCGS16a, CCGS16b, dlHHH16, GS18, GSPSY19p}.

Smooth and analytic solutions for spatially periodic SQG fronts are proved to be locally well-posed in \cite{FR11, Ro05}, and almost-sharp SQG fronts are studied in \cite{CFR04, FLR12, FR12, FR15}. Contour dynamics equations for GSQG fronts with $0 < \alpha < 1$ are straightforward to derive because the standard potential representation for $\vec{u}$ converges even though $\theta$ does not have compact support,
and Cordoba et.~al. \cite{CGI19} prove that flat planar fronts are asymptotically stable in that case. However, the same derivation does not work when $1 \leq \alpha \le 2$ because the Riesz potential \cite{Ste70, Ste93} for $(-\Delta)^{-\alpha / 2}$ decays too slowly at infinity for the straightforward potential representation of $\vec{u}$ to converge.

A derivation of front contour dynamics equations for $1\le \alpha \le 2$ by a regularization procedure is given in \cite{HS18}, and the same equations are derived in \cite{HSZ19pb} for SQG fronts with $\alpha = 1$ by decomposing the velocity field into a planar shear flow and a perturbation due to the front motion with an absolutely convergent potential representation. We provide a similar derivation of GSQG front equations with $1<\alpha < 2$ in the appendix of the present paper,  and a derivation for Euler fronts with $\alpha = 2$ is given in \cite{HMSZ}. The local well-posedness of  a cubically nonlinear approximation for SQG fronts is proved in \cite{HSZ18}, and flat planar SQG fronts are shown to be globally asymptotically stable in \cite{HSZ18p}. A related two-front GSQG problem is studied in \cite{HSZ19pa}.

This paper is concerned with the regime $1 < \alpha < 2$. We assume that the front is a graph $y=\vp(x,t)$ and study piecewise-constant distributional solutions of the GSQG equations \eqref{gsqg1}--\eqref{gsqg2} with
\begin{align}
\label{frontsol}
\theta(\x, t) = \left\{\begin{array}{ll}\theta_+ & \quad \text{if}\ y > \vp(x, t),\\ \theta_- & \quad \text{if}\ y < \vp(x, t).\end{array}\right.
\end{align}
The graph assumption greatly simplifies the evolution equation for the front, but it does not describe wave-breaking
or filamentation of the front. However, for the small-slope fronts we study in this paper, we will show that wave-breaking never occurs.

In Appendix~\ref{app:contour}, we show that for $1<\alpha < 2$ the front location satisfies the evolution equation
\begin{align}
\label{GSQGfront0}
\begin{split}
&\vp_t(x, t) + \Theta A |\px|^{1 - \alpha} \vp_x(x, t)
\\
&\qquad + \Theta \int_\R [\vp_x(x, t) - \vp_x(x + \zeta, t)] \bigg\{\frac{1}{|\zeta|^{2 - \alpha}} - \frac{1}{(\zeta^2 + [\vp(x, t) - \vp(x + \zeta, t)]^2)^{(2 - \alpha) / 2}} \bigg\} \diff{\zeta} = 0,
\end{split}
\end{align}
where
\begin{equation}
\Theta = g_\alpha (\theta_+ - \theta_-),\qquad
A=2 \sin\left(\frac{\pi \alpha}{2}\right) \Gamma(\alpha - 1),
\qquad g_\alpha = \frac{\Gamma(1 - \alpha / 2)}{2^\alpha \pi \Gamma(\alpha / 2)}.
\label{defA}
\end{equation}
Our main result, stated in Theorem~\ref{main}, is that the Cauchy problem on $\R$ for the GSQG front equation \eqref{GSQGfront0} with sufficiently small and smooth initial data has smooth solutions globally in time. Together with \cite{CGI19} for $0<\alpha <1$ and \cite{HSZ18p}
for $\alpha =1$, this completes the proof of asymptotic stability of planar GSQG fronts in the entire range $0 < \alpha < 2$. Our proof follows the ones in \cite{CGI19,HSZ18p}.

We remark that the Euler front equations with $\alpha =2$ are nondispersive \cite{BH10, HS18}, and --- in the absence of dispersive decay --- one cannot expect to get global smooth solutions. In that case, numerical solutions of the full contour dynamics equations \cite{BHnum} indicate that the graphical description of the front may fail in finite time, after which the front breaks and forms extremely thin filaments, similar to the ones that are observed in patches \cite{Dr1, Dr2}.

The rest of the paper is organized as follows. In Section \ref{sec:prelim}, we review results from para-differential calculus and state some estimates for multilinear Fourier multipliers. In Section \ref{sec:para}, we carry out a multilinear expansion of the nonlinearity in \eqref{GSQGfront0} and para-linearize the equation, which enables us to derive improved energy estimates in Section \ref{sec:local}. In Section \ref{sec:global}, we state the main global theorem and outline the steps in the proof of the theorem. Finally, in Sections \ref{sec:sharp}--\ref{sec:nonlindisp} we provide the proofs of each step.

\section{Preliminaries}
\label{sec:prelim}

In this section, we summarize some notation and lemmas that we will use below.

\subsection{Para-differential calculus}
We denote the Fourier transform of $f \colon \R\to \C$ by $\hat f \colon \R\to \C$, where $\hat f= \F f$ is given by
\[
f(x)=\int_{\R} \hat f(\xi) e^{i\xi x} \diff\xi,  \qquad \hat f(\xi)=\frac1{2\pi} \int_{\R}f(x) e^{-i\xi x}\diff{x}.
\]
For $s\in \R$, we denote by $H^s(\R)$ the space of Schwartz distributions $f$ with $\|f\|_{H^s} < \infty$, where
\[
\|f\|_{H^s} = \left[\int_\R (1+\xi^2)^s |\hat{f}(\xi)|^2\, \diff \xi\right]^{1/2}.
\]

Throughout this paper, we use $A\lesssim B$ to mean there is a constant $C$ such that $A\leq C B$, and $A\gtrsim B$ to mean there is a constant $C$ such that $A\geq C B$. We remark that the constant $C$ may depend on $\alpha$. We use $A\approx B$ to mean that $A\lesssim B$ and $B\lesssim A$.
The notation $\O(f)$ denotes a term satisfying
\[
\|\O(f)\|_{{H}^s}\lesssim \|f\|_{{H}^s}
\]
whenever there exists  $s\in \R$ such that $f\in {H}^s$. We also use $O(f)$ to denote a term satisfying $|O(f)|\lesssim|f|$ pointwise.

Let $\chi \colon \R \to \R$ be a smooth function supported in the interval $\{\xi\in \R \mid |\xi|\leq 1/10\}$
and equal to $1$  on $\{\xi\in \R \mid |\xi|\leq 3/40\}$.
If $f$ is a Schwartz distribution on $\R$ and $a \colon \R \times \R \to \C$ is a symbol, then
we define the Weyl para-product $T_a f$ by
\begin{equation}
\label{weyldef}
\F \left[T_a f\right](\xi)= \int_{\R} \chi\left(\frac{|\xi-\eta|}{|\xi+\eta|}\right) \tilde{a}\Big(\xi-\eta, \frac{\xi + \eta}{2}\Big)\hat f(\eta)\diff\eta,
\end{equation}
where $\tilde{a}(\xi,\eta)$ denotes the partial Fourier transform of $a(x, \eta) $ with respect to $x$.

For $r_1, r_2 \in \N_0$, we define a normed symbol space by
\begin{align*}
\Mc_{(r_1, r_2)} &= \{a \colon \R \times \R \to \C : \|a\|_{\Mc_{(r_1, r_2)}} < \infty\},
\\
\|a\|_{\Mc_{(r_1, r_2)}} &= \sup_{(x,\eta) \in \R^2} \left\{\sum_{\alpha=0}^ {r_1} \sum_{\beta=0}^{r_2}|\eta|^{\beta} \big|\partial_\eta^\beta \px^\alpha a(x, \eta)\big|\right\}.
\end{align*}
If $a \in \Mc_{(0, 0)}$ and $f \in {L}^p$, with $1\le p \le \infty$, then $T_af\in L^p$ and
\[
\|T_a f\|_{L^p} \lesssim \|a\|_{\Mc_{(0,0)}} \|f\|_{L^p}.
\]
In particular, if $a\in\Mc_{(0, 0)}$ is real-valued, then $T_a$ is a self-adjoint, bounded linear operator on $L^2$.

\subsection{Fourier multipliers}

Let $\psi \colon\R\to [0,1]$ be a smooth function supported in $[-8/5, 8/5]$ and equal to $1$ in $[-5/4, 5/4]$.
For any $k\in \mathbb Z$, we define
\begin{align}
\label{defpsik}
\begin{split}
\psi_k(\xi)&=\psi(\xi/2^k)-\psi(\xi/2^{k-1}), \qquad \psi_{\leq k}(\xi)=\psi(\xi/2^k),\qquad \psi_{\geq k}(\xi)=1-\psi(\xi/2^{k-1}),\\
\tilde\psi_k(\xi)&=\psi_{k-1}(\xi)+\psi_k(\xi)+\psi_{k+1}(\xi),
\end{split}
\end{align}
and denote by $P_k$, $P_{\leq k}$, $P_{\geq k}$, and $\tilde{P}_k$  the Fourier multiplier operators with symbols $\psi_k, \psi_{\leq k}, \psi_{\geq k}$, and $\tilde{\psi}_k$, respectively. Notice that $\psi_k(\xi)=\psi_0(\xi/2^k)$, $\tilde\psi_k(\xi)=\tilde\psi_0(\xi/2^k)$,
and
\begin{equation}
\label{psi-L2}
 \|\psi_k\|_{L^2}\approx  2^{k/2}, \qquad  \|\psi_k'\|_{L^2}\approx 2^{-k/2}.
\end{equation}
The proof of the following interpolation lemma can be found in \cite{IPu16}.
\begin{lemma}\label{interpolation}
For any $k\in\mathbb Z$ and $f\in L^2(\R)$, we have
\[
\|\widehat{P_kf}\|_{L^\infty}^2\lesssim \|P_k f\|_{L^1}^2\lesssim 2^{-k}\|\hat f\|_{L^2_\xi}\left[2^k\|\partial_\xi\hat f\|_{L^2_\xi}+\|\hat f\|_{L^2_\xi}\right].
\]
\end{lemma}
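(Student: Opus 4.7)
The estimate factors into two independent pieces, and neither is particularly deep. The first inequality $\|\widehat{P_k f}\|_{L^\infty}^2 \lesssim \|P_k f\|_{L^1}^2$ is just the $L^1$--$L^\infty$ endpoint of Hausdorff--Young: with the convention $\hat g(\xi)=(2\pi)^{-1}\int g(x)e^{-i\xi x}\,dx$ adopted in the paper, $\|\hat g\|_{L^\infty}\le(2\pi)^{-1}\|g\|_{L^1}$, and I would apply this to $g=P_k f$.

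For the second inequality, I would set $g:=P_k f$, so $\hat g=\psi_k\hat f$, and perform a weighted Cauchy--Schwarz with an adjustable length scale $L>0$:
\[
\|g\|_{L^1}^2 \le \Big(\int_\R \frac{dx}{1+(x/L)^2}\Big)\Big(\int_\R (1+(x/L)^2)|g(x)|^2\,dx\Big) \lesssim L\|g\|_{L^2}^2 + L^{-1}\|xg\|_{L^2}^2.
\]
Plancherel turns these into Fourier-side quantities, $\|g\|_{L^2}\approx\|\hat g\|_{L^2}$ and $\|xg\|_{L^2}\approx\|\partial_\xi\hat g\|_{L^2}$. Since $\psi_k(\xi)=\psi_0(\xi/2^k)$ satisfies $\|\psi_k\|_{L^\infty}\le 1$ and $\|\psi_k'\|_{L^\infty}\lesssim 2^{-k}$, the product rule then yields
\[
\|\hat g\|_{L^2}\lesssim\|\hat f\|_{L^2_\xi}, \qquad \|\partial_\xi\hat g\|_{L^2}\lesssim 2^{-k}\|\hat f\|_{L^2_\xi}+\|\partial_\xi\hat f\|_{L^2_\xi}.
\]

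I would conclude by minimizing over $L$: the optimum $L\approx(2^{-k}\|\hat f\|_{L^2_\xi}+\|\partial_\xi\hat f\|_{L^2_\xi})/\|\hat f\|_{L^2_\xi}$ produces
\[
\|g\|_{L^1}^2 \lesssim \|\hat f\|_{L^2_\xi}\bigl(2^{-k}\|\hat f\|_{L^2_\xi}+\|\partial_\xi\hat f\|_{L^2_\xi}\bigr) = 2^{-k}\|\hat f\|_{L^2_\xi}\bigl[\|\hat f\|_{L^2_\xi}+2^k\|\partial_\xi\hat f\|_{L^2_\xi}\bigr],
\]
which matches the stated bound. No step is delicate; the only item needing attention is bookkeeping the factor $2^{-k}$ that arises when $\partial_\xi$ falls on $\psi_k$, since this factor is precisely what produces the asymmetry between the two summands on the right-hand side.
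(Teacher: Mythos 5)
Your proof is correct, and it is essentially the standard argument: the paper itself gives no proof but defers to \cite{IPu16}, where the same bound is obtained by exactly this kind of weighted Cauchy--Schwarz (uncertainty-principle) computation combined with Plancherel and the bounds $\|\psi_k\|_{L^\infty}\lesssim 1$, $\|\psi_k'\|_{L^\infty}\lesssim 2^{-k}$. The one point worth stating explicitly when you write it up is that you optimize $L$ against the \emph{upper bounds} $\|\hat g\|_{L^2}\lesssim\|\hat f\|_{L^2_\xi}$ and $\|\partial_\xi\hat g\|_{L^2}\lesssim 2^{-k}\|\hat f\|_{L^2_\xi}+\|\partial_\xi\hat f\|_{L^2_\xi}$ rather than the quantities themselves (which is harmless, since $LA^2+L^{-1}B^2$ is monotone in $A$ and $B$), and that the inequality is vacuous unless $\partial_\xi\hat f\in L^2$.
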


Next, we state an estimate for multilinear Fourier multipliers proved in \cite{IP15}.  Define the class of symbols
\begin{align}
\label{Sinf}
S^\infty:=\{\kappa \colon \R^d\to \C,\quad \kappa \text{ continuous and } \|\kappa\|_{S^\infty}:=\|\F^{-1}(\kappa)\|_{L^1}<\infty\},
\end{align}
and given $\kappa \in S^\infty$, define a multilinear operator $M_\kappa$ acting on Schwartz functions  $f_1,\dotsc, f_m \in \mathcal{S}(\R)$ by
\begin{align*}
M_{\kappa}(f_1,\dotsc,f_m)(x)=\int_{\R^{m}} e^{ix(\xi_1+\dotsb+\xi_m)}\kappa(\xi_1, \dotsc, \xi_m)\hat f_1(\xi_1) \dotsm \hat f_m(\xi_m)\diff{\xi_1} \dotsm \diff{\xi_m}.
\end{align*}

\begin{lemma}\label{multilinear}
(i)~If $\kappa_1, \kappa_2\in S^\infty$, then $\kappa_1\kappa_2\in S^\infty$.

(ii)~Suppose that $1 \le p_1, \dotsc, p_m\leq \infty$, $1 \le p \le \infty$, satisfy
\[
\frac1{p_1}+\frac1{p_2}+ \dotsb +\frac1{p_m}=\frac1p.
\]
 If $\kappa \in S^\infty$, then
\[
\|M_{\kappa}\|_{L^{p_1}\times \dotsb \times L^{p_m}\to L^p}\lesssim \|\kappa\|_{S^\infty}.
\]

(iii)~ Assume $p, q, r\in [1, \infty]$ satisfy $1/p+1/q + 1/r=1$, and $m \in S^\infty_{\eta_1, \eta_2} L^\infty_\xi$. Then, for any $f \in L^p(\R)$, $g \in L^q(\R)$, and $h \in L^r(\R)$,
\[
\left\|\int_{\R^2} m(\eta_1,\eta_2, \xi)\hat f(\eta_1)\hat g(\eta_2)\hat h(\xi-\eta_1-\eta_2) \diff\eta_1\diff\eta_2 \right\|_{L^\infty_\xi}\lesssim \|m\|_{S^\infty_{\eta_1,\eta_2}L^\infty_\xi}\|f\|_{L^p}\|g\|_{L^q}\|h\|_{L^r}.
\]
\end{lemma}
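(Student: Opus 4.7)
The plan is to reduce all three parts to Young's convolution inequality via the defining identity $\|\kappa\|_{S^\infty} = \|\F^{-1}\kappa\|_{L^1}$.

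For (i), since the inverse Fourier transform intertwines pointwise multiplication with convolution, one has $\F^{-1}(\kappa_1\kappa_2) = c\,\F^{-1}\kappa_1 \ast \F^{-1}\kappa_2$ with $c = 1/(2\pi)$ under the paper's Fourier convention. Young's inequality $L^1 \ast L^1 \hookrightarrow L^1$ then gives $\|\kappa_1\kappa_2\|_{S^\infty} \lesssim \|\kappa_1\|_{S^\infty}\|\kappa_2\|_{S^\infty}$ at once.

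For (ii), the main step is a physical-side kernel representation. Writing $K := \F^{-1}\kappa \in L^1(\R^m)$, substitute the Fourier inversion formula for $\kappa$ into the definition of $M_\kappa$, interchange the order of integration, and collapse each inner $\xi_j$-integral into the inverse Fourier transform of $\hat f_j$ at $x - y_j$. This yields
\begin{equation*}
M_\kappa(f_1,\dotsc,f_m)(x) = c\int_{\R^m} K(y_1,\dotsc,y_m)\, f_1(x-y_1)\dotsm f_m(x-y_m)\,\diff y_1\dotsm \diff y_m.
\end{equation*}
Minkowski's integral inequality in $x$, together with Hölder's inequality for $1/p = \sum_j 1/p_j$ and translation invariance of each $L^{p_j}$ norm, delivers $\|M_\kappa(f_1,\dotsc,f_m)\|_{L^p} \lesssim \|K\|_{L^1}\prod_j \|f_j\|_{L^{p_j}}$, which is the desired bound.

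Part (iii) applies the same recipe with $\xi$ frozen as a parameter. Set $K_\xi := \F^{-1}_{\eta_1,\eta_2}[m(\cdot,\cdot,\xi)]$, so that $\|K_\xi\|_{L^1_{y_1,y_2}} = \|m(\cdot,\cdot,\xi)\|_{S^\infty_{\eta_1,\eta_2}}$. Substituting the Fourier representation of $m(\cdot,\cdot,\xi)$ into the integrand and recognizing
\begin{equation*}
\int_{\R^2}\hat f(\eta_1)\hat g(\eta_2)\hat h(\xi - \eta_1 - \eta_2)\, e^{-iy_1\eta_1 - iy_2\eta_2}\,\diff\eta_1\diff\eta_2 = \widehat{[f(\cdot - y_1)\, g(\cdot - y_2)\, h]}(\xi),
\end{equation*}
one bounds this quantity pointwise in $\xi$ by $\|f\|_{L^p}\|g\|_{L^q}\|h\|_{L^r}$ through Hölder, using the hypothesis $1/p + 1/q + 1/r = 1$. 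Multiplying by $|K_\xi(y_1,y_2)|$, integrating in $y$, and taking $\sup_\xi$ completes the proof, under the natural reading $\|m\|_{S^\infty_{\eta_1,\eta_2}L^\infty_\xi} = \sup_\xi \|m(\cdot,\cdot,\xi)\|_{S^\infty_{\eta_1,\eta_2}}$.

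No step is genuinely hard---the lemma is essentially the statement that the Wiener algebra $\F L^1$ is a commutative subalgebra of $L^\infty$ whose elements act boundedly on tensor products of Lebesgue spaces. The only care needed is with tracking the $2\pi$ normalizations and, in (iii), ensuring that the partial inverse Fourier transform defining $K_\xi$ is taken in the $\eta$-variables only while $\xi$ acts as an external parameter.
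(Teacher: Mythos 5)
Your argument is correct, and it is the standard proof of this lemma: the paper itself gives no proof but defers to the reference [IP15], where the result is established by exactly the kernel representation $M_\kappa(f_1,\dotsc,f_m)(x)=c\int K(y)\prod_j f_j(x-y_j)\,\diff y$ with $K=\F^{-1}\kappa$, followed by Minkowski and H\"older, just as you do. The only caveat worth recording is that when some exponent equals $\infty$ the formulas should first be verified on Schwartz functions and then extended by density/duality, but this is routine and does not affect the argument.
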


By interpolation, we can estimate the $S^{\infty}$-norm of a symbol $m(\eta_1,\eta_2)\in C_c^\infty$ by
\begin{equation}\label{SymEst}
\begin{aligned}
\|m\|_{S^\infty}\lesssim  \|m\|_{L^1}^{1/4} \|\partial_{\eta_2}^2m\|_{L^1}^{1/2}\|\partial_{\eta_1}^2\partial_{\eta_2}^2m\|_{L^1}^{1/4},\\
\|m\|_{S^\infty}\lesssim  \|m\|_{L^1}^{1/4}\|\partial_{\eta_1}^2m\|_{L^1}^{1/2}\|\partial_{\eta_1}^2\partial_{\eta_2}^2m\|_{L^1}^{1/4}.
\end{aligned}
\end{equation}

\section{Para-linearization of the equation}
\label{sec:para}
In this section, we para-linearize the front equation \eqref{GSQGfront0}; the final result is given in \eqref{Para-eq}.
Without loss of generality, we fix $\Theta = -1$ in the following.

Assuming that $|\vp_x| \ll 1$, we first carry out a multilinear expansion of the nonlinear term,
in a similar way to \cite{CGI19, HSZ18p}. Omitting the details, we find that  \eqref{GSQGfront0} can be written as
\begin{equation}\label{GSQGfront}
\begin{aligned}
& \varphi_t(x, t)  - A |\px|^{1 - \alpha} \varphi_x(x, t) \\
& \quad + \sum_{n = 1}^\infty \frac{c_n}{2n + 1} \px \int_{\R^{2n + 1}} \Tb_n(\etab_n) \hat{\varphi}(\eta_1, t) \hat{\varphi}(\eta_2, t) \dotsm \hat{\varphi}(\eta_{2n + 1}, t) e^{i (\eta_1 + \eta_2 + \dotsb + \eta_{2n + 1}) x} \diff{\etab_n}
= 0,
\end{aligned}
\end{equation}
where ${\etab}_n = (\eta_1, \eta_2, \dotsc, \eta_{2n})$, and the symbol $\Tb_n$ is given by
\begin{align}
\label{defTn}
\Tb_n(\etab_n) = \int_{\R} \frac{\prod_{j = 1}^{2n + 1} \left(1 - e^{i \eta_j \zeta}\right)}{\zeta^{2n + 1}} |\zeta|^{\alpha - 1} \sgn{\zeta} \diff{\zeta}, \qquad c_n = \frac{\Gamma\left(\frac{\alpha}{2}\right)}{\Gamma(n + 1) \Gamma\left(\frac{\alpha}{2} - n\right)}.
\end{align}
We then adapt Proposition 5.2 of \cite{HSZ19pa}, with improved remainder estimates, to the case $1 < \alpha < 2$.
Fixing $\alpha$, we use $C(n,s)$ to denote a generic constant depending on $n,s\in \N$.

\begin{proposition}
\label{varphinonlindecomp01}
Let $1 < \alpha < 2$. Suppose that $\varphi(\cdot, t) \in H^s(\R)$ where $s \geq 4$ is an integer, and $\|\varphi\|_{W^{3, \infty}} $ is sufficiently small. Then
\begin{align*}
& \sum_{n = 1}^\infty \frac{c_n}{2n + 1} \px \int_{\R^{2n + 1}} \Tb_n(\etab_n) \hat{\varphi}(\eta_1, t) \hat{\varphi}(\eta_2, t) \dotsm \hat{\varphi}(\eta_{2n + 1}, t) e^{i (\eta_1 + \eta_2 + \dotsb + \eta_{2n + 1}) x} \diff{\etab_n}\\
&\qquad = \px T_{B^0[\varphi]} \varphi(x, t)  + \px |\px|^{1 - \alpha} T_{B^{1 - \alpha}[\varphi]} \varphi(x, t) + \Rc,
\end{align*}
where the symbols $B^0[\varphi]$ and $B^{1 - \alpha}[\varphi]$are defined by
\begin{align*}
\begin{split}
B^0[\varphi](\cdot, \xi) &=  - \sum_{n = 1}^\infty B^0_n[\varphi](\cdot, \xi),\qquad
B^{1 - \alpha}[\varphi](\cdot, \xi) =  - \sum_{n = 1}^\infty B^{1 - \alpha}_n[\varphi](\cdot, \xi),\\
B^0_n[\varphi](\cdot, \xi) &= A c_n \F_{\zeta}^{-1}\Bigg[\int_{\R^{2n}} \delta\bigg(\zeta - \sum_{j = 1}^{2n} \eta_j\bigg) \prod_{j = 1}^{2n} \bigg(i \eta_j \hat{\varphi}(\eta_j) \chi\Big(\frac{(2n + 1) \eta_j}{\xi}\Big)\bigg) \int_{[0, 1]^{2n}} \bigg|\sum_{j = 1}^{2n} \eta_j s_j\bigg|^{1 - \alpha} \diff{\hat{\s}_n} \diff{\hat{\etab}_n}\Bigg],
\\
B^{1 - \alpha}_n[\varphi](\cdot, \xi) &= - A c_n \F_{\zeta}^{-1}\Bigg[\int_{\R^{2n}} \delta\bigg(\zeta - \sum_{j = 1}^{2n} \eta_j\bigg) \prod_{j = 1}^{2n} \bigg(i \eta_j \hat{\varphi}(\eta_j) \chi\Big(\frac{(2n + 1) \eta_j}{\xi}\Big)\bigg) \diff{\hat{\etab}_n}\Bigg],
\end{split}
\end{align*}
where $A$ is defined in \eqref{defA}, and and the remainder term $\Rc$ satisfies
\begin{align}
\label{R1est01}
\|\Rc\|_{H^s} \lesssim \|\vp\|_{H^s} \sum_{n = 1}^\infty C(n, s) |c_n| \|\vp_x\|_{W^{1, \infty}}^{2n}.
\end{align}
Here, $\chi$ is the cutoff function in \eqref{weyldef}, $\hat{\etab}_n = (\eta_1, \eta_2, \dotsc, \eta_{2n})$, and $\hat{\s}_n = (s_1, s_2, \dotsc, s_{2n})$. The operators $T_{B^{1 - \alpha}[\varphi]}$ and $T_{B^0[\varphi]}$ are self-adjoint and they satisfy the estimates
\begin{align}
\label{B-est01}
\begin{split}
\|B^{1 - \alpha}[\vp]\|_{\M_{(1, 1)}} \lesssim \sum_{n = 1}^\infty C(n, s) |c_n| \|\vp_x\|_{W^{1, \infty}}^{2n},\\
\|B^0[\vp]\|_{\M_{(1, 1)}} \lesssim \sum_{n = 1}^\infty C(n, s) |c_n| \|\vp_x\|_{W^{1, \infty}}^{2n},
\end{split}
\end{align}
\end{proposition}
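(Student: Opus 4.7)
The plan is to analyze each $n$-th multilinear term in the sum in \eqref{GSQGfront} separately, extract its para-product content, and then sum the resulting estimates in $n$ using the smallness of $\|\vp_x\|_{W^{1,\infty}}$. Since $\Tb_n$ is symmetric in $\eta_1,\dotsc,\eta_{2n+1}$, one can use this symmetry (which is what cancels the $1/(2n+1)$ prefactor after summing $2n+1$ symmetric copies) to single out one variable, say $\eta_{2n+1}$, as the \emph{high} frequency serving as the function argument of the Weyl para-product, while the remaining $2n$ frequencies are restricted by the cutoffs $\chi((2n+1)\eta_j/\xi)$ to the para-product regime $|\eta_j|\ll|\xi|$, where $\xi=\sum_j\eta_j$. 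The complementary region, in which no single frequency dominates, is a high-high interaction whose contribution is absorbed into $\Rc$ and estimated via Lemma~\ref{multilinear} together with the interpolation bounds \eqref{SymEst}.

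Inside the para-product regime, I would expand $\Tb_n$ using the exact identity
\[
1 - e^{i\eta_j\zeta} = -i\eta_j\zeta\int_0^1 e^{is_j\eta_j\zeta}\,ds_j,\qquad j=1,\dotsc,2n,
\]
for the low-frequency factors. The $\zeta^{2n}$ produced this way cancels $2n$ of the $2n+1$ powers of $\zeta$ in the denominator of $\Tb_n$, and the remaining oscillatory $\zeta$-integral is evaluated using the Fourier transform of $|\zeta|^{\alpha-2}$:
\[
\int_\R \bigl(e^{i\Sigma\zeta}-e^{i(\eta_{2n+1}+\Sigma)\zeta}\bigr)|\zeta|^{\alpha-2}\,d\zeta = A\bigl(|\Sigma|^{1-\alpha}-|\eta_{2n+1}+\Sigma|^{1-\alpha}\bigr),
\]
with $\Sigma=\sum_{j=1}^{2n}s_j\eta_j$ and $A$ as in \eqref{defA}. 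The $|\Sigma|^{1-\alpha}$ piece depends only on the low frequencies; together with the factors $\prod_{j=1}^{2n}(-i\eta_j)\hat\vp(\eta_j)$, the constraint $\delta(\zeta-\sum\eta_j)$, and the inverse Fourier transform $\F_\zeta^{-1}$ of the para-product setup, it reassembles into precisely the symbol $B^0_n$ contributing $\partial_x T_{B^0_n}\vp$. The $|\eta_{2n+1}+\Sigma|^{1-\alpha}$ piece is then replaced by $|\xi|^{1-\alpha}$, which is extracted as the operator $|\partial_x|^{1-\alpha}$; what remains has the simpler form of the symbol $B^{1-\alpha}_n$ (no $|\cdot|^{1-\alpha}$ weight and no $s$-integral). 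The error of this replacement, which gains a factor of order $|\Sigma|/|\xi|$, is put into $\Rc$.

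It remains to verify the symbol bounds \eqref{B-est01} and the remainder bound \eqref{R1est01}. For the symbols, each factor $i\eta_j\hat\vp(\eta_j)$ is $\widehat{\vp_x}(\eta_j)$ in the variable dual to $\zeta$, so real-space analysis gives a pointwise bound of size $\|\vp_x\|_{L^\infty}^{2n}$, while $\partial_x$ and $|\eta|\partial_\eta$ derivatives produce at most one extra $\|\vp_x\|_{W^{1,\infty}}$ factor from differentiating the cutoffs $\chi((2n+1)\eta_j/\xi)$. Reality of $B^0_n$ and $B^{1-\alpha}_n$, and hence self-adjointness of $T_{B^0}$ and $T_{B^{1-\alpha}}$, follows from the $\eta_j\to-\eta_j$ symmetry of the integrand together with $\hat\vp(-\eta)=\overline{\hat\vp(\eta)}$. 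The remainder $\Rc$ collects three contributions: (i) the high-high region outside the para-product regime, bounded by Lemma~\ref{multilinear} and \eqref{SymEst}; (ii) the error from replacing $|\eta_{2n+1}+\Sigma|^{1-\alpha}$ by $|\xi|^{1-\alpha}$, which is one order smoother in the high frequency; and (iii) sub-principal corrections from the Weyl mid-point convention. The hard part is not any of these estimates individually but controlling the $n$-dependence so that each bound at level $n$ takes the form $C(n,s)|c_n|\|\vp_x\|_{W^{1,\infty}}^{2n}(\cdots)$ with $C(n,s)$ growing at most polynomially in $n$; this is what allows the series in $n$ to converge under the smallness hypothesis, and it forces careful bookkeeping of the combinatorial factors from symmetrization over $2n+1$ arguments, the conditionally convergent $\zeta$-integrals at infinity, and the accumulated $|\eta|$-weights from the $2n$ para-product cutoffs.
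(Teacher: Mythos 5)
Your proposal is correct and follows essentially the same route as the paper: symmetrize so that $\eta_{2n+1}$ is the dominant frequency, split via the $\chi$ cutoffs into the para-product regime and a high--high remainder, use $1-e^{i\eta_j\zeta}=-i\eta_j\zeta\int_0^1 e^{is_j\eta_j\zeta}\,\diff s_j$ to reduce the $\zeta$-integral to $A\bigl(|\Sigma|^{1-\alpha}-|\eta_{2n+1}+\Sigma|^{1-\alpha}\bigr)$, and read off $B^0_n$ and $B^{1-\alpha}_n$ with the leftover pieces absorbed into $\Rc$. The only (cosmetic) difference is that you replace $|\eta_{2n+1}+\Sigma|^{1-\alpha}$ directly by $|\xi|^{1-\alpha}$ to place $|\px|^{1-\alpha}$ outside the para-product, whereas the paper first replaces it by $|\eta_{2n+1}|^{1-\alpha}$, producing $\px T_{B^{1-\alpha}_n[\vp]}|\px|^{1-\alpha}\vp$, and then commutes $|\px|^{1-\alpha}$ past $T_{B^{1-\alpha}[\vp]}$ via the stated commutator estimate.
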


\begin{proof}
Let
\[
f_n(x)= \int_{\R^{2n + 1}} \Tb_n(\etab_n) \hat{\varphi}(\eta_1, t) \hat{\varphi}(\eta_2, t) \dotsm \hat{\varphi}(\eta_{2n + 1}, t) e^{i (\eta_1 + \eta_2 + \dotsb + \eta_{2n + 1}) x} \diff{\etab_n}.
\]
In view of the commutator estimate
\[
\left\|\px \left[T_{B^{1-\alpha}[\vp]},|\px|^{1-\alpha}\right]\vp\right\|_{H^s}\lesssim \|B^{1-\alpha}[\vp]\|_{\M_{(1,1)}}\|\vp\|_{H^s},
\]
we only need to prove that
\[
 \sum_{n = 1}^\infty \frac{c_n}{2n + 1} \px f_n(x)=\partial_x T_{B^0[\vp]}\vp+\px\left(T_{B^{1-\alpha}[\vp]}|\px|^{1-\alpha}\vp\right)+\Rc,
\]
where $\Rc$ satisfies \eqref{R1est01}, and to do this it suffices to prove for each $n \in \N$ that
\begin{align*}
 &\frac{c_n}{2n + 1} \px f_n(x)= \partial_x T_{B^0_n[\vp]}\vp + \px \left(T_{B^{1-\alpha}_n[\vp]} |\px|^{1 - \alpha} \vp\right)+\Rc_n,
\\
&\|\Rc_n\|_{H^s}\lesssim C(n,s)|c_n| \|\vp_x\|_{W^{1, \infty}}^{2n - 2} \|\vp\|_{H^s}.
\end{align*}

By symmetry, we can assume that $|\eta_{2n+1}|$ is the largest frequency in the expression of $f_n$. Then
\begin{align}
\label{intprod}
\begin{split}
 &\frac{c_n}{2n + 1} \px f_n(x)=c_n\px \int\limits_{\substack{|\eta_{2n+1}|\geq |\eta_j|\\ \text{ for all } j=1, \dotsc, 2n}} \Tb_n(\etab_n) \hat\varphi(\eta_1) \hat\varphi(\eta_2) \dotsm \hat\varphi(\eta_{2n+1}) e^{i (\eta_1 + \eta_2 + \dotsb + \eta_{2n + 1}) x}\diff{\etab_n}\\
 &\qquad=c_n\px \int_{\R}\int\limits_{\substack{ |\eta_j|\leq |\eta_{2n+1}|\\ \text{ for all } j=1, \dotsc, 2n}} \Tb_n(\etab_n) \hat\varphi(\eta_1) \hat\varphi(\eta_2) \dotsm \hat\varphi(\eta_{2n}) e^{i (\eta_1 + \eta_2 + \dotsb + \eta_{2n }) x}\diff{\hat\etab_n}
 \hat\varphi(\eta_{2n+1})e^{ix\eta_{2n+1 } } \diff{\eta_{2n+1}}\\
&\qquad=c_n\px \int_{\R}\int\limits_{\substack{ |\eta_j|\leq |\eta_{2n+1}|\\ \text{ for all } j=1, \dotsc, 2n}} \Tb_n(\etab_n) \prod\limits_{j=1}^{2n}\left[\chi\left(\frac{(2n+1)\eta_j}{\eta_{2n+1}}\right)+1-\chi\left(\frac{(2n+1)\eta_j}{\eta_{2n+1}}\right)\right]
\hat\varphi(\eta_j)
\\&\hskip2.5in
\cdot e^{i (\eta_1 + \eta_2 + \dotsb + \eta_{2n }) x}\diff{\hat\etab_n}
\hat\varphi(\eta_{2n+1})e^{ix\eta_{2n+1 } } \diff{\eta_{2n+1}}.
\end{split}
\end{align}
Next, we expand the product in the above integral, and consider two cases depending on whether a term in the expansion
contains only factors of $\chi$ or contains at least one factor $1-\chi$.

{\bf Case I.} When we take only factors of $\chi$ in the expansion of the product, we get the integral
\begin{align}\label{expandfn1}
c_n\px \int_{\R}\int\limits_{\substack{ |\eta_j|\leq |\eta_{2n+1}|\\ \text{ for all } j=1, \dotsc, 2n}} \Tb_n(\etab_n) \prod\limits_{j=1}^{2n}\chi\left(\frac{(2n+1)\eta_j}{\eta_{2n+1}}\right)\hat\varphi(\eta_j) e^{i (\eta_1 + \eta_2 + \dotsb + \eta_{2n }) x}\diff{\hat\etab_n}\hat\varphi(\eta_{2n+1})e^{ix\eta_{2n+1 } } \diff{\eta_{2n+1}}.
\end{align}

From \eqref{defTn}, we can write $\Tb_n$ as an integral with respect to
$\s_n = (s_1, s_2, \dotsc, s_{2n + 1})$ by first adding a convergent factor $e^{-\epsilon \zeta^2}$ and then take limits $\epsilon \to 0+$,
\begin{align*}
\Tb_n(\etab_n) &= - \int_\R |\zeta|^{\alpha - 1} \sgn{\zeta} \int_{[0, 1]^{2n + 1}} \prod_{j = 1}^{2n + 1} i \eta_j e^{i \eta_j s_j \zeta} \diff{\s_n} \diff{\zeta}\\
&= - i A (1 - \alpha) \prod_{j = 1}^{2n + 1} (i \eta_j) \int_{[0, 1]^{2n + 1}} \bigg|\sum_{j = 1}^{2n + 1} \eta_j s_j\bigg|^{- \alpha} \sgn\bigg(\sum_{j = 1}^{2n + 1} \eta_j s_j\bigg) \diff{\s_n}\\
&= - A |\eta_{2n + 1}|^{1 - \alpha} \prod_{j = 1}^{2n} (i \eta_j) \int_{[0, 1]^{2n}} \bigg|1 + \sum_{j = 1}^{2n} \frac{\eta_j}{\eta_{2n + 1}} s_j\bigg|^{1 - \alpha} - \bigg|\sum_{j = 1}^{2n} \frac{\eta_j}{\eta_{2n + 1}} s_j\bigg|^{1 - \alpha} \diff{\hat{\s}_n}.
\end{align*}
Substitution of this expression into \eqref{expandfn1} gives the following three terms
\begin{align}
\label{T0}
&c_n\px \int_{\R}\int\limits_{\substack{ |\eta_j|\leq |\eta_{2n+1}|\\ \text{ for all } j=1, \dotsc, 2n}} \Tb_n^{0}(\etab_n) \prod\limits_{j=1}^{2n}\chi\left(\frac{(2n+1)\eta_j}{\eta_{2n+1}}\right)\hat\varphi(\eta_j) e^{i (\eta_1 + \eta_2 + \dotsb + \eta_{2n }) x}\diff{\hat\etab_n}\hat\varphi(\eta_{2n+1})e^{ix\eta_{2n+1 } } \diff{\eta_{2n+1}},
\\
\label{T2-alpha}
&c_n\px \int_{\R}\int\limits_{\substack{ |\eta_j|\leq |\eta_{2n+1}|\\ \text{ for all } j=1, \dotsc, 2n}} \Tb_n^{1-\alpha}(\etab_n) \prod\limits_{j=1}^{2n}\chi\left(\frac{(2n+1)\eta_j}{\eta_{2n+1}}\right)\hat\varphi(\eta_j) e^{i (\eta_1 + \eta_2 + \dotsb + \eta_{2n }) x}\diff{\hat\etab_n}\hat\varphi(\eta_{2n+1})e^{ix\eta_{2n+1 } } \diff{\eta_{2n+1}},
\\
\label{remainder1}
&c_n\px \int_{\R}\int\limits_{\substack{ |\eta_j|\leq |\eta_{2n+1}|\\ \text{ for all } j=1, \dotsc, 2n}} \Tb_n^{\leq -1}(\etab_n)(\etab_n) \prod\limits_{j=1}^{2n}\chi\left(\frac{(2n+1)\eta_j}{\eta_{2n+1}}\right)\hat\varphi(\eta_j) e^{i (\eta_1 + \eta_2 + \dotsb + \eta_{2n }) x}\diff{\hat\etab_n}\hat\varphi(\eta_{2n+1})e^{ix\eta_{2n+1 } } \diff{\eta_{2n+1}},
\end{align}
where
\begin{align*}
\Tb_n^{0}(\etab_n) &= A\prod_{j = 1}^{2n} (i \eta_j) \int_{[0, 1]^{2n}} \bigg|\sum_{j = 1}^{2n} \eta_j s_j\bigg|^{1 - \alpha} \diff{\hat{\s}_n},\\
\Tb_n^{1 - \alpha}(\etab_n) &= -A |\eta_{2n + 1}|^{1 - \alpha} \prod_{j = 1}^{2n} (i \eta_j),\\
\Tb_n^{\leq -1}(\etab_n) &= - A |\eta_{2n + 1}|^{1 - \alpha} \prod_{j = 1}^{2n} (i \eta_j) \cdot \int_{[0, 1]^{2n}} \bigg\{\bigg|1 + \sum_{j = 1}^{2n} \frac{\eta_j}{\eta_{2n + 1}} s_j\bigg|^{1 - \alpha} - 1\bigg\} \diff{\hat{\s}_n}.
\end{align*}

We claim that \eqref{T0}, \eqref{T2-alpha}, and \eqref{remainder1} can be written as
\begin{align}
\label{TBdecomp01}
  \px T_{B^0[\varphi]} \varphi + \Rc_{1, 2},\quad \px T_{B^{1 - \alpha}[\varphi]} |\px|^{1 - \alpha} \varphi + \Rc_{1, 1}, \quad \text{and} \quad  \Rc_{1, 3},
\end{align}
where $\Rc_{1, 1}$, $\Rc_{1, 2}$, and $\Rc_{1, 3}$ satisfy the estimate \eqref{R1est01}. Indeed,
\begin{align*}
\F\left[\px T_{B^{1 - \alpha}_n[\varphi]} \big(|\px|^{1 - \alpha} \varphi\big)\right](\xi) &= - \frac{c_n}{\pi} \Gamma(\alpha - 1) \sin\bigg(\frac{\pi \alpha}{2}\bigg) i \xi \int_\R \chi\bigg(\frac{|\xi - \eta|}{|\xi + \eta|}\bigg) |\eta|^{1 - \alpha}\\
& \qquad \int_{\R^{2n}} \delta\bigg(\xi - \eta - \sum_{j = 1}^{2n} \eta_j\bigg) \prod_{j = 1}^{2n} \bigg(i \eta_j \hat{\varphi}(\eta_j) \chi\Big(\frac{2 (2n + 1) \eta_j}{\xi + \eta}\Big)\bigg) \diff{\hat{\etab}_n} \hat{\varphi}(\eta) \diff{\eta},
\end{align*}
while the Fourier transform of \eqref{T2-alpha} is
\begin{align*}
& -\frac{c_n}{\pi} \Gamma(\alpha - 1) \sin\bigg(\frac{\pi \alpha}{2}\bigg) i \xi \int_\R \int\limits_{\substack{|\eta_j| \leq |\eta_{2n + 1}|\\ \text{for all}\ j = 1, 2, \dotsc, 2n}} \delta\bigg(\xi - \sum_{j = 1}^{2n + 1} \eta_j\bigg) |\eta_{2n + 1}|^{1 - \alpha}\\
& \hspace{1in} \cdot \prod_{j = 1}^{2n} \bigg(\chi\Big(\frac{(2n + 1) \eta_j}{\eta_{2n + 1}}\Big) (i \eta_j) \hat{\varphi}(\eta_j)\bigg) \diff{\hat{\etab}_n} \hat{\varphi}(\eta_{2n + 1}) \diff{\eta_{2n + 1}}.
\end{align*}
The difference of the above two integrals is
\begin{align}
\label{error01}
\begin{split}
- \frac{c_n}{\pi} & \Gamma(\alpha - 1) \sin\bigg(\frac{\pi \alpha}{2}\bigg) i \xi \int_{\R^{2n + 1}} \delta\bigg(\xi - \sum_{j = 1}^{2n + 1} \eta_j\bigg) |\eta_{2n + 1}|^{1 - \alpha} \\
& \cdot \Bigg[\chi\bigg(\frac{|\xi - \eta_{2n + 1}|}{|\xi + \eta_{2n + 1}|}\bigg) \prod_{j = 1}^{2n} \bigg(i \eta_j \hat{\varphi}(\eta_j) \chi\Big(\frac{2 (2n + 1) \eta_j}{\xi + \eta_{2n + 1}}\Big)\bigg)\\
& \quad - \mathbb{I}_n \prod_{j = 1}^{2n} \bigg(i \eta_j \hat{\varphi}(\eta_j) \chi\Big(\frac{(2n + 1) \eta_j}{\eta_{2n + 1}}\Big)\bigg)\Bigg] \diff{\hat{\etab}_n} \hat{\varphi}(\eta_{2n + 1}) \diff{\eta_{2n + 1}},
\end{split}
\end{align}
where $\mathbb{I}_n$ is the function which is equal to $1$ on $\{|\eta_j| \leq |\eta_{2n + 1}|,\ \text{for all}\ 1, \dotsc, 2n\}$ and equal to zero otherwise.

When $\etab_n$ satisfies
\begin{align}
\label{etancon01}
|\eta_j| \leq \frac{1}{40} \frac{1}{2 n + 1} |\eta_{2 n + 1}| \qquad \text{for all}\ j = 1, 2, \dotsc, 2n,
\end{align}
we have $\mathbb{I}_n = 1$ and $\chi \left(\frac{(2n + 1) \eta_j}{\eta_{2n + 1}}\right) = 1$. In addition, since $\xi = \sum_{j = 1}^{2n + 1} \eta_j$, we have
\begin{align*}
& \frac{|\xi - \eta_{2n + 1}|}{|\xi + \eta_{2n + 1}|} = \frac{\left|\sum_{j = 1}^{2n} \eta_j\right|}{\left|\sum_{j = 1}^{2n} \eta_j + 2 \eta_{2n + 1}\right|} \leq \frac{\frac{1}{40} |\eta_{2n + 1}|}{(2 - \frac{1}{40}) |\eta_{2n + 1}|} = \frac{1}{79} < \frac{3}{40},\\
& \frac{2 (2n + 1) |\eta_j|}{|\xi + \eta_{2n + 1}|} \leq \frac{\frac{1}{20} |\eta_{2n + 1}|}{(2 - \frac{1}{40}) |\eta_{2n + 1}|} = \frac{2}{79} < \frac{3}{40}.
\end{align*}

Therefore, the integrand of \eqref{error01} is supported outside of the set \eqref{etancon01}, and there exists $j_1 \in \{1, \dotsc, 2n\}$, such that $|\eta_{2n + 1}| \geq |\eta_{j_ 1}| > \frac{1}{40} \frac{1}{2n + 1} |\eta_{2n + 1}|$. It follows from this comparability of $|\eta_{j_1}|$ and $|\eta_{2n + 1}|$ that the $H^s$-norm of \eqref{error01} can be bounded by
\[
C(n, s) |c_n| \|\vp\|_{H^s} \|\vp_x\|_{W^{1, \infty}}^{2n}.
\]
It follows that \eqref{T2-alpha} can be written as in \eqref{TBdecomp01}. Similar calculations apply to \eqref{T0}.

The symbols $B^{1 - \alpha}_n[\varphi]$ and $B^0_n[\varphi]$ are real, so $T_{B^{1 - \alpha}_n[\varphi]}$ and $T_{B^0_n[\varphi]}$ are self-adjoint. Again, without loss of generality, we assume that $|\eta_{2n}| = \max_{1 \leq j \leq 2n} |\eta_j|$ and observe that
\begin{align*}
\int_{[0, 1]^{2n}} \bigg|\sum_{j = 1}^{2n} \eta_j s_j\bigg|^{1 - \alpha} \diff{\hat{\s}_n} = |\eta_{2n}|^{1 - \alpha} + O(1).
\end{align*}
Thus, using Young's inequality, we obtain the symbol estimates \eqref{B-est01}.

To estimate \eqref{remainder1}, we observe that on the support of the functions $\chi\left(\frac{(2n + 1) \eta_j}{\eta_{2n + 1}}\right)$, we have
\[
\frac{|\eta_j|}{|\eta_{2n + 1}|} \leq \frac{1}{10 (2n + 1)}.
\]
Since $s_j \in [0, 1]$, a Taylor expansion gives
\[
\left|\Tb_n^{\leq -1}(\etab_n)\right| \lesssim \frac{\left[\prod_{j = 1}^{2n} |\eta_j|\right] \left[\sum_{j = 1}^{2n} |\eta_j|\right]}{|\eta_{2n + 1}|^\alpha}.
\]
Therefore, the $H^s$-norm of \eqref{remainder1} is bounded by $C(n, s) |c_n| \|\vp\|_{H^s} \|\vp_x\|_{W^{1,\infty}}^{2n}$.

\textbf{Case II.} When there is at least one factor of the form $1 - \chi$ in the expansion of the product in the integral \eqref{intprod}, we get a term like
\begin{align}
\label{error'01}
\begin{split}
& c_n \px \int_\R \int\limits_{\substack{|\eta_j| \leq |\eta_{2n + 1}|\\ \text{for all}\ j = 1, 2, \dotsc, 2n}} \Tb_n(\etab_n) \prod_{k = 1}^\ell \chi\bigg(\frac{(2n + 1) \eta_{j_k}}{\eta_{2n + 1}}\bigg) \cdot \prod_{k = \ell + 1}^{2n} \bigg[1 - \chi\bigg(\frac{(2n + 1) \eta_{j_k}}{\eta_{2n + 1}}\bigg)\bigg]\\
& \hspace{2.5in} \cdot \prod_{j = 1}^{2n} \hat{\varphi}(\eta_j) e^{i (\eta_1 + \eta_2 + \dotsb + \eta_{2n}) x} \diff{\hat{\etab}_n} \hat{\varphi}(\eta_{2n + 1}) e^{i \eta_{2n + 1} x} \diff{\eta_{2n + 1}},
\end{split}
\end{align}
where $0 \leq \ell \leq 2n - 1$ is an integer, and $\{j_k : k = 1, \dotsc, 2n\}$ is a permutation of $\{1, \dotsc, 2n\}$.

$1 - \chi\left(\frac{(2n + 1) \eta_{j_{2n}}}{\eta_{2n + 1}}\right)$ is compactly supported on
\[
\frac{|\eta_{j_{2n}}|}{|\eta_{2n + 1}|}\geq \frac{3}{40(2n + 1)}.
\]
By assumption, $\eta_{2n + 1}$ has the largest absolute value, so
\[
\frac{3}{40(2n + 1)} |\eta_{2n + 1}| \leq |\eta_{j_{2n}}|\leq |\eta_{2n + 1}|,
\]
meaning that the frequencies $|\eta_{j_{2n}}|$ and $|\eta_{2n + 1}|$ are comparable.

Without loss of generality, we assume that $|\eta_{j_1}| \leq |\eta_{j_2}| \leq \dotsb \leq |\eta_{j_{2n}}| \leq |\eta_{2n + 1}|$, define $\eta_{j_{2n + 1}} = \eta_{2n + 1}$, and split the integral of $\Tb_n$ \eqref{defTn} into three parts.
\begin{align}
\Tb_n(\etab_n) & = \Tb_n^{low}(\etab_n) + \sum_{k = 1}^{2n - 1} \Tb_n^{med, (k)}(\etab_n) + \Tb_n^{high}(\etab_n),\notag\\
\Tb_n^{low}(\etab_n) & = \int_{|\eta_{j_{2n}} \zeta| < 2} \frac{\prod_{j = 1}^{2n + 1} \left(1 - e^{i \eta_j \zeta}\right)}{\zeta^{2 n + 1}} |\zeta|^{\alpha - 1} \sgn{\zeta} \diff{\zeta}, \label{Tn-low}\\
\Tb_n^{med, (k)}(\etab_n) & = \int_{\frac{2}{\left|\eta_{j_{k + 1}}\right|} \le |\zeta| \le \frac{2}{\left|\eta_{j_k}\right|}} \frac{\prod_{j = 1}^{2n + 1} \left(1 - e^{i \eta_j \zeta}\right)}{\zeta^{2 n + 1}} |\zeta|^{\alpha - 1} \sgn{\zeta} \diff{\zeta}, \label{Tn-med}\\
\Tb_n^{high}(\etab_n) & = \int_{|\eta_{j_1} \zeta| > 2} \frac{\prod_{j = 1}^{2n + 1} \left(1 - e^{i \eta_j \zeta}\right)}{\zeta^{2 n + 1}} |\zeta|^{\alpha - 1} \sgn{\zeta} \diff{\zeta}. \label{Tn-high}
\end{align}

To estimate \eqref{Tn-low}, we observe that
\begin{align*}
|\Tb_n^{low}(\etab_n)| & \le \prod_{k = 1}^{2n} |\eta_{j_k}| \cdot \int_{|\eta_{j_{2n}} \zeta| < 2} \bigg(\prod_{k = 1}^{2n} \frac{|1 - e^{i \eta_{j_k} \zeta}|}{|\eta_{j_k} \zeta|}\bigg) |1 - e^{i \eta_{2n + 1} \zeta}| \cdot |\zeta|^{\alpha - 2} \diff{\zeta}\\
& \leq \prod_{k = 1}^{2n} |\eta_{j_k}| \cdot \int_{|\eta_{j_{2n}} \zeta| < 2} \frac{2} {|\zeta|^{2 - \alpha}} \diff{\zeta}\\
& \leq \frac{2^{\alpha + 1}}{\alpha - 1} \bigg(\prod_{k = 1}^{2n} |\eta_{j_k}|\bigg) |\eta_{j_{2n}}|^{1 - \alpha}.
\end{align*}

For each $1 \le k \le 2n - 1$, we estimate \eqref{Tn-med} as
\begin{align*}
|\Tb_n^{med, (k)}(\etab_n)| & \leq \prod_{\ell = 1}^{k} |\eta_{j_\ell}| \cdot \int_{\frac{2}{\left|\eta_{j_{k + 1}}\right|} \le |\zeta| \le \frac{2}{\left|\eta_{j_k}\right|}} \bigg(\prod_{\ell = 1}^k \frac{|1 - e^{i \eta_{j_\ell} \zeta}|}{|\eta_{j_\ell} \zeta|}\bigg) \cdot \frac{\prod_{\ell = k + 1}^{2n + 1} |1 - e^{i \eta_{j_\ell} \zeta}|}{|\zeta|^{2n + 1 - k}} |\zeta|^{\alpha - 1} \diff{\zeta}\\
& \leq 2^{2n + 1 - k} \prod_{\ell = 1}^{k} |\eta_{j_\ell}| \cdot \int_{\frac{2}{\left|\eta_{j_{k + 1}}\right|} \le |\zeta| \le \frac{2}{\left|\eta_{j_k}\right|}} |\zeta|^{-2n - 2 + k + \alpha} \diff{\zeta}\\
& \leq \frac{2^\alpha}{2n +1 - k - \alpha} \left(|\eta_{j_k}|^{2n + 1 - k - \alpha} + |\eta_{j_{k + 1}}|^{2n + 1 - k - \alpha}\right) \prod_{\ell = 1}^{k} |\eta_{j_\ell}|\\
& \leq \frac{2^\alpha}{2 - \alpha} \bigg(\prod_{k = 1}^{2n} |\eta_{j_k}|\bigg) |\eta_{j_{2n}}|^{1 - \alpha}.
\end{align*}

As for \eqref{Tn-high}, we have
\begin{align*}
|\Tb_n^{high}(\etab_n)| & \leq |\eta_{j_1}| \int_{|\eta_{j_1} \zeta| > 2} \bigg(\prod_{k = 2}^{2n + 1} \frac{|1 - e^{i \eta_{j_k} \zeta}|}{|\zeta|} \bigg) \cdot \frac{|1 - e^{i \eta_{j_1} \zeta}|}{|\eta_{j_1} \zeta|} |\zeta|^{\alpha - 1} \diff{\zeta}\\
& \le 2^{2n} |\eta_{j_1}| \int_{|\eta_{j_1} \zeta| > 2} \frac{\diff{\zeta}}{|\zeta|^{2n + 1 - \alpha}}\\
& \le \frac{2^{\alpha + 1}}{2n - \alpha} \bigg(\prod_{k = 1}^{2n} |\eta_{j_k}|\bigg) |\eta_{j_{2n}}|^{1 - \alpha}.
\end{align*}
Collecting these estimates, we get that
\[
\Tb_n(\etab_n) \leq C(n, s) \biggl(\prod_{k = 1}^{2n} |\eta_{j_k}|\biggr) |\eta_{j_{2n}}|^{1 - \alpha},
\]
and we can use this inequality to bound the $H^s$-norm of the expression in \eqref{error'01} by
\[
\|\vp\|_{H^s} \sum_{n = 1}^\infty  C(n, s) |c_n| \|\varphi_x\|_{W^{1, \infty}}^{2n}.
\]

The proposition then follows by combining the previous estimates.
\end{proof}

Using the results of Proposition~\ref{varphinonlindecomp01} in \eqref{GSQGfront}, we find that the front equation \eqref{GSQGfront0} with $\Theta = -1$ can be written as
\begin{equation}\label{Para-eq}
\vp_t-A\partial_x|\partial_x|^{1-\alpha}\vp+\partial_x T_{B^{0}[\vp]}\vp + \partial_x|\partial_x|^{1-\alpha} T_{B^{1-\alpha}[\vp]}\vp+\Rc=0,
\end{equation}
where the remainder term $\Rc$ satisfies \eqref{R1est01}.

\section{Improved energy estimates}
\label{sec:local}

When $1< \alpha \le 2$, the GSQG front equation has standard short-time energy estimates for unweighted $H^s$-norms without any smallness assumption on $\vp_x$ \cite{HS18,HSZ19pa}. (By contrast, when $0<\alpha \le 1$, one has to use appropriately weighted $H^s$-norms \cite{CGI19, HSZ18, HSZ18p}.) In this section, at the expense of a smallness assumption on $\vp_x$, we use the para-linearized equation to derive more precise \textit{a priori} energy estimates that involve the $W^{1, \infty}$-norm of $\vp_x$. These estimates will allow us to analyze the decay of solutions in the global existence proof.

We first observe that the $L^2$-norm of the solution is conserved, so we only need to estimate the higher homogeneous Sobolev norms. Taking $k\in \N$ derivatives of equation \eqref{Para-eq}, we get that
\begin{equation}\label{ktheq}
\px^k\vp_t-A\partial_x^{k+1}|\partial_x|^{1-\alpha}\vp+ \partial_x^{k+1}|\partial_x|^{1-\alpha} T_{B^{1-\alpha}[\vp]}\vp+\partial_x ^{k+1}T_{B^{0}[\vp]}\vp+\px^k\Rc=0,
\end{equation}
where
\begin{align*}
\partial_x^{k+1}|\partial_x|^{1-\alpha} T_{B^{1-\alpha}[\vp]}\vp &= \px|\partial_x|^{1-\alpha} T_{B^{1-\alpha}[\vp]}\partial_x^{k}\vp-\px|\partial_x|^{1-\alpha}\left[T_{B^{1-\alpha}[\vp]},\partial_x^{k}\right]\vp,\\
\partial_x ^{k+1}T_{B^{0}[\vp]}\vp &= \px\left[\px^{k}, T_{B^{0}[\vp]}\right] \vp+\px T_{B^{0}[\vp]}\px^{k}\vp.
\end{align*}
We multiply \eqref{ktheq} by $\px^k\vp$ and integrate over $\R$ to get
\begin{align}
\label{energy_eq}
\begin{split}
&\frac12\frac{\diff}{\diff t} \|\px^k\vp\|_{L^2}^2 -\int_{\R} A\partial_x^{k+1}|\partial_x|^{1-\alpha}\vp\cdot \px^k\vp\diff x+\int_{\R} \px|\partial_x|^{1-\alpha} T_{B^{1-\alpha}[\vp]}\partial_x^{k}\vp \cdot \px^k\vp
\\
&\qquad -\px|\partial_x|^{1-\alpha} \left[T_{B^{1-\alpha}[\vp]},\partial_x^{k}\right]\vp\cdot \px^k\vp \diff x
+\int_{\R} \px\left[\px^{k}, T_{B^{0}[\vp]}\right] \vp  \cdot\px^k\vp
\\
&\qquad+\px T_{B^{0}[\vp]}\px^{k}\vp\cdot \px^k\vp  +\px^k\Rc\cdot \px^k\vp  \diff x=0.
\end{split}
\end{align}
Since $\partial_x|\partial_x|^{1-\alpha}$ is skew-symmetric and $\px|\partial_x|^{1-\alpha} T_{B^{1-\alpha}[\vp]}$, $\px T_{B^{0}[\vp]}$
are skew-symmetric up to commutators, we can write
\begin{align*}
&-\int_{\R} A\partial_x^{k+1}|\partial_x|^{1-\alpha}\vp \cdot\px^k\vp\diff x+\int_{\R} \px|\partial_x|^{1-\alpha} T_{B^{1-\alpha}[\vp]}\partial_x^{k}\vp\cdot  \px^k\vp +\px T_{B^{0}[\vp]}\px^{k}\vp \cdot\px^k\vp \diff{x}
\\
&\qquad\qquad = \frac{1}{2} \int_{\R}  \left[\px|\partial_x|^{1-\alpha}, T_{B^{1-\alpha}[\vp]}\right]\partial_x^{k}\vp\cdot  \px^k\vp
+\left[\px, T_{B^{0}[\vp]}\right]\px^{k}\vp \cdot\px^k\vp \diff{x}.
\end{align*}

Using the commutator estimates
\begin{align*}
\left\|\px|\partial_x|^{1-\alpha} \left[T_{B^{1-\alpha}[\vp]},\partial_x^{k}\right]\vp\right\|_{L^2}
+ \left\|\left[\px|\partial_x|^{1-\alpha}, T_{B^{1-\alpha}[\vp]}\right]\partial_x^{k}\vp\right\|_{L^2}
&\lesssim \|\vp\|_{H^{k+1-\alpha}} \bigg(\sum_{n = 1}^\infty  C(n, s) |c_n| \|\varphi_x\|_{W^{1, \infty}}^{2n}\bigg),\\
\left\|\px\left[\px^{k}, T_{B^{0}[\vp]}\right] \vp\right\|_{L^2}
+ \left\|\left[\px, T_{B^{0}[\vp]}\right]\px^{k}\vp\right\|_{L^2}
&\lesssim \|\vp\|_{H^{k}} \bigg(\sum_{n = 1}^\infty  C(n, s) |c_n| \|\varphi_x\|_{W^{1, \infty}}^{2n}\bigg),
\end{align*}
and the estimate \eqref{R1est01} of the remainder term $\Rc$ in \eqref{energy_eq}, we get that
\[
\frac{\diff}{\diff t} \|\partial_x^k\vp\|_{L^2}^2\lesssim \|\vp\|_{H^{k}}^2 \bigg(\sum_{n = 1}^\infty  C(n, s) |c_n| \|\varphi_x\|_{W^{1, \infty}}^{2n}\bigg).
\]
Summing this estimate over $1\le k \le s$, and including the $L^2$-norm, we obtain that
\[
\frac{\diff}{\diff t} \|\vp\|_{H^s}^2\lesssim \|\vp\|_{H^s}^2 \bigg(\sum_{n = 1}^\infty  C(n, s) |c_n| \|\varphi_x\|_{W^{1, \infty}}^{2n}\bigg),
\]
and Gr\"{o}nwall's inequality implies that
\begin{align}
\label{exp-ineq}
\|\vp\|_{H^s}^2 \lesssim \|\vp_0\|^2 \exp\left[\int_0^t \|\vp_x(\tau)\|_{W^{1, \infty}}^2 F(\|\vp_x(\tau)\|_{W^{1, \infty}}) \diff{\tau}\right],
\end{align}
where $F \colon \R \to \R_+$ is a continuous, increasing function.

Using this energy estimate, we can obtain local solutions $\vp\in C([0,T]; H^s)$ for $s \ge 3$ by standard methods.

\section{Global solution for small initial data}
\label{sec:global}

From now on, we fix the following parameter values
\begin{equation}
s = 1200, \qquad r = 8, \qquad p_0 = 10^{-4}.
\label{defsrp}
\end{equation}
We denote by
\begin{align}
\label{def-S}
\S = (2 - \alpha) t \partial_t + x \partial_x
\end{align}
the scaling vector field that commutes with the linearization of \eqref{Para-eq},
$\vp_t = A\partial_x|\partial_x|^{1-\alpha}\vp$.
We also introduce the notation
\begin{align}
\label{def-h}
h(x, t) = e^{- A t \px |\px|^{1 - \alpha}} \vp(x, t), \qquad \hat{h}(\xi, t) = e^{- i A t \xi |\xi|^{1 - \alpha}} \hat{\vp}(\xi, t)
\end{align}
for the function $h$ obtained by removing the action of the linearized evolution group on $\vp$.

\begin{theorem}[Main theorem]\label{main}
Let $s$, $r$, $p$ be defined as in \eqref{defsrp}. There exists $\epsilon > 0$ such that if $0 < \ve_0 \le \ve$
and $\vp_0\in H^s(\R)$ satisfies
\[
\|\vp_0\|_{H^s} + \|x \px \vp_0\|_{H^r} \le \ve_0,
\]
then there exists a unique global solution $\vp\in C([0,\infty); H^s(\R))$ of \eqref{GSQGfront0} with initial condition
$\left.\vp\right|_{t=0} = \vp_0$. Moreover, this solution satisfies
\[
\|\vp(t)\|_{H^s} + \|\S \vp(t)\|_{H^r} \lesssim \ve_0 (t + 1)^{p_0},
\]
where $\S$ is the vector field in \eqref{def-S}.
\end{theorem}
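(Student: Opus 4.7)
The plan is to establish Theorem~\ref{main} by a bootstrap/continuity argument in the spirit of the vector field method, using the improved energy bound \eqref{exp-ineq} together with a dispersive/weighted estimate for the profile that captures the scaling vector field $\S$. Local existence in $C([0,T];H^s)$ for some $T>0$ already follows from the calculations in Section~\ref{sec:local}, so on the maximal existence interval I would make the a priori bootstrap assumption
\[
\|\vp(t)\|_{H^s} + \|\S\vp(t)\|_{H^r} \le 2C_1\ve_0(1+t)^{p_0},
\]
and then work to improve the constant $2C_1$ to $C_1$ for $\ve_0$ sufficiently small. A standard continuity argument would then extend the solution globally and yield the claimed polynomial growth.

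The first main step is to convert the bootstrap assumption into pointwise decay for $\vp_x$. Passing to the profile $h$ defined in \eqref{def-h}, one sees that $\widehat{\S\vp}$ equals $-\xi\partial_\xi\hat h$ plus multiplicative lower-order terms from the time-derivative part of $\S$, so the bound on $\|\S\vp\|_{H^r}$ gives a weighted $L^2_\xi$ bound on $\partial_\xi\hat h$ up to a factor $(1+t)^{p_0}$, while the $H^s$-bound controls $\hat h$ itself. Feeding these into Lemma~\ref{interpolation} gives frequency-localized $L^\infty_\xi$ estimates for $\widehat{P_k h}$, and a stationary-phase analysis of the linear propagator $e^{iAt\xi|\xi|^{1-\alpha}}$, whose symbol has nondegenerate second derivative
\[
\partial_\xi^2\bigl[A\xi|\xi|^{1-\alpha}\bigr] = A(2-\alpha)(1-\alpha)\sgn(\xi)|\xi|^{-\alpha}
\]
on every dyadic scale for $1<\alpha<2$, yields the sharp decay $\|\vp_x(t)\|_{W^{1,\infty}}\lesssim \ve_0(1+t)^{-1/2+Cp_0}$. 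Plugging this into \eqref{exp-ineq} makes the exponent $\int_0^t\|\vp_x\|_{W^{1,\infty}}^2 F\,d\tau$ at most $\O(\ve_0^2(1+t)^{2Cp_0})$, so the $H^s$ part of the bootstrap closes immediately: $\|\vp(t)\|_{H^s}\le \ve_0\exp(C'\ve_0^2)\ll \ve_0(1+t)^{p_0}$ for $\ve_0$ small.

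The remaining and harder task is to propagate the weighted bound on $v := \S\vp$. Using $[\S, A\px|\px|^{1-\alpha}] = -(2-\alpha)A\px|\px|^{1-\alpha}$ together with the para-linearized equation \eqref{Para-eq}, I would derive
\[
v_t - A\px|\px|^{1-\alpha}v = (2-\alpha)\Nc[\vp] + \S\Nc[\vp],
\]
where $\Nc[\vp]$ stands for the nonlinear terms $\px T_{B^0[\vp]}\vp+\px|\px|^{1-\alpha}T_{B^{1-\alpha}[\vp]}\vp+\Rc$ from \eqref{Para-eq}. An $H^r$ energy estimate on $v$, mirroring the paradifferential manipulation of Section~\ref{sec:local}, then reduces matters to bounding the right-hand side in $H^r$. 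The terms in which $\S$ falls on a low-frequency factor contribute the harmless product $\|\vp_x\|_{W^{k,\infty}}^2\cdot\|\S\vp\|_{H^r}$; the problematic contributions come from the highest frequency factor and from $\S$ acting on $\vp$ itself at top frequency.

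The main obstacle is precisely this last point: at cubic order the nonlinearity is not fully non-resonant, and a direct estimate of $\|\S\Nc[\vp]\|_{H^r}$ against $\|\vp_x\|_{W^{k,\infty}}^2$ with the decay rate $(1+t)^{-1+Cp_0}$ is only borderline integrable and will not by itself fit inside an $(1+t)^{p_0}$ growth budget. The remedy is a space-time-resonance analysis of the cubic terms: identify the time-resonant set $\{(\eta_1,\eta_2): \omega(\xi)-\sum\pm\omega(\eta_j)=0\}$ for $\omega(\xi)=A\xi|\xi|^{1-\alpha}$, perform a bounded normal-form (or Duhamel integration by parts in time) on Duhamel's formula for $\hat h$ to remove the non-resonant cubic contribution at the cost of quintic remainders, and control the small resonant piece via weighted $L^\infty_\xi$ estimates using Lemma~\ref{multilinear} together with the symbol bounds \eqref{SymEst}. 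This is exactly the structure of Sections~\ref{sec:sharp}--\ref{sec:nonlindisp}; granting those bounds yields $\|\S\vp(t)\|_{H^r}\le \tfrac{C_1}{2}\ve_0(1+t)^{p_0}$, closing the bootstrap and establishing the theorem.
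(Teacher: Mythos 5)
There is a genuine gap in the structure of your bootstrap. You propagate only $\|\vp(t)\|_{H^s}+\|\S\vp(t)\|_{H^r}\le 2C_1\ve_0(1+t)^{p_0}$, and you concede that this yields pointwise decay only at the lossy rate $\|\vp_x(t)\|_{W^{1,\infty}}\lesssim \ve_0(1+t)^{-1/2+Cp_0}$ (indeed, Lemma~\ref{interpolation} applied to the profile gives $\|\widehat{P_kh}\|_{L^\infty_\xi}$ in terms of $\|\hat h\|_{L^2}$ and $\|\partial_\xi\hat h\|_{L^2}$, both of which grow like $(1+t)^{p_0}$ under your assumptions). With that rate the energy argument does \emph{not} close: $\int_0^t\|\vp_x\|_{W^{1,\infty}}^2\,d\tau\lesssim \ve_0^2(1+t)^{2Cp_0}/(2Cp_0)$, so \eqref{exp-ineq} gives $\|\vp(t)\|_{H^s}^2\lesssim\ve_0^2\exp\bigl[C'\ve_0^2(1+t)^{2Cp_0}\bigr]$, which is super-polynomial in $t$, not $O\bigl((1+t)^{2p_0}\bigr)$; your claim that the $H^s$ part ``closes immediately'' with a bound $\ve_0\exp(C'\ve_0^2)$ silently treats $(1+\tau)^{-1+2Cp_0}$ as integrable, which it is not. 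The same defect kills the $\S\vp$ estimate. This is exactly why the paper's bootstrap (Proposition~\ref{bootstrap}) carries the additional, \emph{time-uniform} norm $\|\vp(t)\|_Z=\|(|\xi|+|\xi|^{r+3})\hat\vp\|_{L^\infty_\xi}$: the first term of the linear dispersive estimate \eqref{LocDis} is paid for by $\|\widehat{P_kh}\|_{L^\infty_\xi}\lesssim\|h\|_Z\lesssim\ve_1$ with no growth, producing the sharp rate $\ve_1(t+1)^{-1/2}$ of Lemma~\ref{sharp}; the borderline integral then contributes only $\log(1+t)$, and Gr\"onwall gives $(1+t)^{C\ve_1^2}\le(1+t)^{p_0}$. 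Without a $Z$-type norm in the bootstrap your scheme cannot recover the lost $p_0$.

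A secondary, related misattribution: in the paper the space--time resonance analysis and modified scattering (Section~\ref{sec:nonlindisp}) are needed to propagate the $Z$-norm, not to estimate $\|\S\Nc[\vp]\|_{H^r}$; the scaling vector field bound (Lemma~\ref{weightedE}) closes by a direct commutator/energy argument once the sharp decay is available. Also, the paper does not perform a normal form producing quintic remainders; it uses integration by parts in frequency (nonresonant blocks), integration by parts in time (space resonances), and a logarithmic phase correction $\Theta(\xi,t)$ absorbing the space--time resonances. Your outline would need to be reorganized around a bootstrap quantity of $Z$-type before the remaining steps can be ``granted.''
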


This theorem is a consequence of local existence and the following bootstrap result involving the $Z$-norm of the solution,
which we define for a function $f$ by
\begin{align}
\label{def-Z}
\|f\|_Z = \left\|(|\xi| + |\xi|^{r + 3}) \hat{f}(\xi)\right\|_{L_\xi^\infty}.
\end{align}

\begin{proposition}[Bootstrap]
\label{bootstrap}
Let $T > 1$ and suppose that $\vp \in C([0, T]; H^s(\R))$ is a solution of \eqref{GSQGfront0}, where the initial data satisfies
\[
\|\vp_0\|_{H^s} + \|x \px \vp_0\|_{H^r} \le \ve_0
\]
for some $0 < \ve_0 \ll 1$. If there exists $\ve_0 \ll \ve_1 \lesssim \ve_0^{1 / 3}$ such that the solution satisfies
\[
(t + 1)^{- p_0} \left(\|\vp(t)\|_{H^s} + \|\S \vp(t)\|_{H^r}\right) + \|\vp(t)\|_Z \le \ve_1
\]
for every $t \in [0, T]$, then the solution satisfies an improved bound
\[
(t + 1)^{- p_0} \left(\|\vp(t)\|_{H^s} + \|\S \vp(t)\|_{H^r}\right) + \|\vp(t)\|_Z \le \ve_0.
\]
\end{proposition}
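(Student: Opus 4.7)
\emph{Plan of proof.} The bootstrap closes by proving three independent improved estimates under the a priori hypothesis: on $\|\vp(t)\|_{H^s}$, on $\|\S\vp(t)\|_{H^r}$, and on $\|\vp(t)\|_Z$. The key intermediate step is to convert the hypothesis into pointwise dispersive decay
\[
\|\vp_x(t)\|_{W^{1,\infty}} \lesssim \ve_1 (1+t)^{-1/2},
\]
via a linear $L^2 \to L^\infty$ estimate for the propagator $e^{At\px|\px|^{1-\alpha}}$. The dispersion relation $\omega(\xi) = A\xi|\xi|^{1-\alpha}$ has $\omega''(\xi) = A(2-\alpha)(1-\alpha)|\xi|^{-\alpha}\sgn{\xi}$, non-degenerate away from $\xi=0$, producing the expected $t^{-1/2}$ decay after a Littlewood--Paley frequency localization and stationary phase; interpolating the $Z$-norm (which controls $\hat\vp$ pointwise in $\xi$ with weights near $\xi=0,\infty$) against $\|\S\vp\|_{H^r}$ (which, modulo the linear flow, is comparable to $\|x\hat h\|_{L^2_\xi}$) yields the pointwise decay above.

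\emph{Sobolev and weighted estimates.} Substituting the decay into the Gr\"onwall bound \eqref{exp-ineq} of Section~\ref{sec:local} gives
\[
\|\vp(t)\|_{H^s} \lesssim \ve_0 \exp\!\biggl(C\ve_1^2 \int_0^t \frac{\diff\tau}{1+\tau}\biggr) \lesssim \ve_0 (1+t)^{C\ve_1^2},
\]
and since $\ve_1 \lesssim \ve_0^{1/3}$ the exponent $C\ve_1^2 \ll p_0$ while the prefactor is $\ve_0$, strictly improving the a priori bound. For the weighted estimate one uses that $\S$ commutes with the linear operator $\pt - A\px|\px|^{1-\alpha}$, so applying $\S$ to the para-linearized equation \eqref{Para-eq} yields an equation for $\S\vp$ of the same form; the commutators $[\S, T_{B^{0}[\vp]}]$ and $[\S, T_{B^{1-\alpha}[\vp]}]$ produce symbols with $\Mc_{(1,1)}$-bounds controlled by $\|\vp_x\|_{W^{1,\infty}}$, so the energy argument of Section~\ref{sec:local} runs verbatim and gives $\|\S\vp(t)\|_{H^r} \lesssim \ve_0 (1+t)^{C\ve_1^2}$.

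\emph{$Z$-norm estimate and main obstacle.} The heart of the proof is the $Z$-norm bound. Passing to the profile $\hat h(\xi,t) = e^{-iAt\xi|\xi|^{1-\alpha}}\hat\vp(\xi,t)$ and substituting into the multilinear expansion \eqref{GSQGfront} yields
\[
\pt \hat h(\xi,t) = -\sum_{n\ge 1}\frac{c_n\, i\xi}{2n+1}\int_{\eta_1+\dotsb+\eta_{2n+1}=\xi} e^{it\Phi_n(\xi,\etab_n)}\,\Tb_n(\etab_n)\prod_{j=1}^{2n+1}\hat h(\eta_j,t)\diff\etab_n,
\]
with phase $\Phi_n(\xi,\etab_n) = A\sum_{j=1}^{2n+1}\eta_j|\eta_j|^{1-\alpha} - A\xi|\xi|^{1-\alpha}$. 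To close the $Z$-bound one decomposes frequency space by paraproduct and by the sizes of $|\Phi_n|$ and $|\nabla_{\etab_n}\Phi_n|$: time-non-resonant regions ($|\Phi_n|$ large) are handled by integration by parts in $t$, producing a normal-form quartic term with an extra $t^{-1}$ factor; space-non-resonant regions ($|\nabla_{\etab_n}\Phi_n|$ large) are handled by integration by parts in $\etab_n$, converting derivatives $\px\hat h$ into $\S$-weights and again gaining a $t^{-1}$ factor; the residual space-time resonant set must be shown to be small enough, with $\Tb_n$ vanishing sufficiently on it, that direct multilinear bounds via Lemma~\ref{multilinear} with the symbol estimates \eqref{SymEst} yield time-integrable decay of $\pt\hat h$ in the $Z$-seminorm. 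The principal obstacle is the resonance analysis for the cubic ($n=1$) phase $\Phi_1$: ruling out true space-time resonances away from $\xi=0$, quantifying the measure of near-resonant sets, and tracking the vanishing of $\Tb_1$ there; this is the content of the sharp dispersive estimates and nonlinear dispersive analysis in Sections~\ref{sec:sharp}--\ref{sec:nonlindisp}. Higher-order interactions $n\ge 2$ are genuinely smaller by the smallness of $\|\vp_x\|_{W^{1,\infty}}$ in the sums \eqref{B-est01} and pose no additional difficulty.
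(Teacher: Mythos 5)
Your overall architecture matches the paper's: (i) a linear dispersive estimate plus interpolation gives $\|\vp_x(t)\|_{W^{1,\infty}}\lesssim \ve_1(t+1)^{-1/2}$ (Lemma~\ref{sharp}); (ii) this feeds into the Gr\"onwall bound \eqref{exp-ineq} to improve the $H^s$ bound, and an analogous energy argument for $\S\vp$ after commuting $\S$ through \eqref{Para-eq} improves the weighted bound (Lemma~\ref{weightedE}); (iii) the $Z$-norm is closed by a space--time resonance analysis of the cubic term, with the quintic and higher terms absorbed by the decay. Steps (i) and (ii) are essentially the paper's proof.

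There is, however, a genuine gap in your step (iii). You assert that on the residual space--time resonant set the contribution to $\pt\hat h$ is time-integrable because the set is small and ``$\Tb_n$ vanishes sufficiently on it.'' It does not: the cubic symbol at the space--time resonance $(\eta_1,\eta_2,\eta_3)=(\xi,\xi,-\xi)$ equals
\[
\Tb_1'(\xi,\xi,-\xi)=\bigl(4-2^{3-\alpha}\bigr)|\xi|^{3-\alpha}\neq 0 \qquad (1<\alpha<2),
\]
and since $\Phi$ and $\nabla_{(\eta_1,\eta_2)}\Phi$ both vanish there, stationary phase gives a contribution to $\pt\hat h(\xi,t)$ of size $c\,t^{-1}\,\xi\,\Tb_1'(\xi,\xi,-\xi)\,|\hat h(\xi,t)|^2\hat h(\xi,t)$, which is \emph{not} integrable in time; no normal form removes it because the time phase also vanishes. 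This is precisely why the paper introduces the modified-scattering phase correction $\Theta(\xi,t)$ (the $\beta_1,\beta_2,\beta_3$ terms in Section~\ref{8.1}), works with $\hat v=e^{i\Theta}\hat\vp$, and exploits that the correction is purely imaginary so $\|\vp\|_Z=\|v\|_Z$; the space--time resonant pieces of $U_1$ are then cancelled against the $\beta_j$ terms in Section~\ref{sec:spacetime}, and only the \emph{space} resonances (region $A_1$, where $\Phi\neq 0$) are handled by integration by parts in time. Without this phase correction your $Z$-norm estimate cannot close, and the bootstrap fails at the cubic order. A secondary, smaller point: the decay you derive is for $W^{1,\infty}$, but the remainder and symbol bounds \eqref{R1est01}, \eqref{B-est01}, \eqref{estimate_N} and the $Z$-norm analysis require decay of $\|\vp_x\|_{W^{r,\infty}}$ (the paper proves $W^{r+1,\infty}$ decay), so you should run the dispersive estimate on $\px^j h$ for $j\le r+1$ rather than only low derivatives.
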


We call the assumptions in Proposition \ref{bootstrap} the \emph{bootstrap assumptions}. To prove Proposition \ref{bootstrap}, we need the following lemmas, most of whose proofs are deferred to the next sections.

\begin{lemma}[Sharp pointwise decay]\label{sharp}
Under the bootstrap assumptions,
\[
\|\vp_x(t)\|_{W^{r, \infty}}\lesssim \ve_1(t+1)^{-1/2} \qquad \text{for $0\le t \le T$}.
\]
\end{lemma}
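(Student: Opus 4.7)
The plan is to combine Littlewood--Paley decomposition with a stationary-phase analysis of the linear propagator $U(t) = e^{At \px |\px|^{1-\alpha}}$. Writing $\vp = U(t) h$, each dyadic piece admits the Fourier representation
\begin{equation*}
P_k \px^{j+1}\vp(x,t) = \int_{\R} e^{i[x\xi + At\xi|\xi|^{1-\alpha}]} (i\xi)^{j+1} \psi_k(\xi) \hat h(\xi,t) \diff\xi, \qquad 0 \le j \le r,
\end{equation*}
whose phase $\Phi(\xi;x,t) = x\xi + At\xi|\xi|^{1-\alpha}$ satisfies $|\Phi_{\xi\xi}(\xi)| \sim t|\xi|^{-\alpha}$ on $\supp \psi_k$. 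This curvature is the source of the $t^{-1/2}$ decay, and the argument reduces to summing contributions from the two natural frequency regimes.

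I would treat high and low frequencies separately. For very high frequencies $2^k \ge (1+t)^{\beta}$ with a small $\beta > 0$ chosen so that $\beta(s - r - 3/2) \ge 1$, Bernstein together with the bootstrap bound $\|\vp\|_{H^s} \le \ve_1(1+t)^{p_0}$ gives
\begin{equation*}
\|P_k \px^{j+1}\vp\|_{L^\infty} \lesssim 2^{k(j+3/2)}\|P_k\vp\|_{L^2} \lesssim 2^{-k(s-r-3/2)}\|\vp\|_{H^s},
\end{equation*}
which sums to far better than $\ve_1(1+t)^{-1/2}$ because $s=1200$. For the remaining frequencies $2^k \le (1+t)^{\beta}$, a van der Corput--type dispersive estimate applies: when $\Phi_\xi$ has no zero on $\supp\psi_k$, integration by parts yields arbitrary decay, while a nondegenerate stationary point $\xi_0\sim 2^k$ contributes a principal term bounded by
\begin{equation*}
t^{-1/2}\cdot 2^{k(j+1+\alpha/2)}|\hat h(\xi_0,t)| \lesssim \ve_1 t^{-1/2}\cdot 2^{k(j+\alpha/2)}\min\bigl(1, 2^{-k(r+2)}\bigr),
\end{equation*}
using the pointwise bound $|\hat h(\xi)| \le \ve_1/(|\xi| + |\xi|^{r+3})$ from the $Z$-norm. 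Summing over $k \in \Z$ for each $0 \le j \le r$ produces the required $\ve_1(1+t)^{-1/2}$ bound since $\alpha/2 \in (1/2,1)$ and $r+2 > \alpha/2$.

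The main technical obstacle will be controlling the stationary-phase error, which depends on $\partial_\xi \hat h$. Differentiating the profile identity $\hat h = e^{-iAt\xi|\xi|^{1-\alpha}}\hat\vp$ and substituting the paralinearized equation \eqref{Para-eq} for $\partial_t \hat\vp$, one obtains
\begin{equation*}
\xi \partial_\xi \hat h = -e^{-iAt\xi|\xi|^{1-\alpha}}\F[\S\vp] - \hat h + (2-\alpha)t\, e^{-iAt\xi|\xi|^{1-\alpha}}\F[\Nc(\vp)],
\end{equation*}
where $\Nc(\vp)$ collects the at-least-cubic terms in \eqref{Para-eq}. The first two summands are controlled immediately by the bootstrap bounds on $\|\S\vp\|_{H^r}$ and $\|\vp\|_{L^2}$, each of size $\ve_1(1+t)^{p_0}$; the time-weighted nonlinear term, though multiplied by $t$, is cubic in $\vp$ and bounded by $\lesssim t\,\ve_1^3 (1+t)^{O(p_0)}$, which is absorbed using $\ve_1 \lesssim \ve_0^{1/3}$ together with the smallness of $p_0$. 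Because stationary-phase errors carry an extra factor of $t^{-1/2}$ relative to the principal contribution, the resulting losses decay strictly faster than $t^{-1/2}$ and sum over $k$ without difficulty. The subtlest point lies in the range $2^k \ll 1$, where $|\Phi_{\xi\xi}|$ blows up near $\xi = 0$ and the factor $\xi^{-1}$ in the identity above amplifies the error, but the weight $|\xi|$ built into the $Z$-norm is precisely what is needed to render those contributions summable.
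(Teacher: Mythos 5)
Your proposal follows essentially the same route as the paper: the paper packages your stationary-phase analysis into the quoted linear dispersive estimate (Lemma \ref{disp}), applies it to $\px^j h$ on each dyadic block, bounds the leading term by the $Z$-norm, controls the $x\px h$ error term through the identity \eqref{xipartialh} together with the bootstrap bounds on $\S\vp$ and the nonlinearity $\Nc$, and handles high frequencies with the $H^s$ norm exactly as you do. The argument is correct at the same level of detail as the paper's own proof.
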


\begin{lemma}[Scaling vector field estimate]
\label{weightedE}
Under the bootstrap assumptions,
\[
(t + 1)^{- p_0} \|\S \vp(t)\|_{H^r} \lesssim \ve_0\qquad \text{for $0\le t \le T$}.
\]
\end{lemma}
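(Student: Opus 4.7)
The plan is to transfer the energy estimate \eqref{exp-ineq} of Section~\ref{sec:local} to the commuted variable $\S\vp$ in $H^r$, and then close the resulting Gr\"onwall inequality using the sharp $(t+1)^{-1/2}$ decay supplied by Lemma~\ref{sharp}. First I would derive a closed equation for $\S\vp$. From the commutators $[\S,\partial_t]=-(2-\alpha)\partial_t$ and $[\S,\partial_x|\partial_x|^{1-\alpha}]=(\alpha-2)\partial_x|\partial_x|^{1-\alpha}$, applying $\S$ to \eqref{GSQGfront0} (with $\Theta=-1$) and using the equation to eliminate $\vp_t$ yields
\[
(\S\vp)_t - A\partial_x|\partial_x|^{1-\alpha}\S\vp = -(\S+(2-\alpha))\Nc[\vp],
\]
where $\Nc[\vp]$ denotes the nonlinear part. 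Since the multilinear symbol $\Tb_n$ in \eqref{defTn} is homogeneous of degree $2n+1-\alpha$ (equivalently, since \eqref{GSQGfront0} is invariant under the scaling $(x,t)\mapsto(\lambda x,\lambda^{2-\alpha}t)$), a short computation based on integration by parts in the frequency variables shows that $\S+(2-\alpha)$ acts as a derivation on each multilinear term of the expansion \eqref{GSQGfront}, producing a sum in which exactly one factor $\hat\vp(\eta_j)$ is replaced by $\widehat{\S\vp}(\eta_j)$.

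Consequently, para-linearizing the equation for $\S\vp$ in the same way as Proposition~\ref{varphinonlindecomp01}, with $\S\vp$ playing the role of the high-frequency factor, yields
\[
(\S\vp)_t - A\partial_x|\partial_x|^{1-\alpha}\S\vp + \partial_x T_{B^0[\vp]}\S\vp + \partial_x|\partial_x|^{1-\alpha}T_{B^{1-\alpha}[\vp]}\S\vp + \widetilde\Rc = 0,
\]
where $B^0[\vp]$ and $B^{1-\alpha}[\vp]$ are the same symbols from Proposition~\ref{varphinonlindecomp01} (so the estimates \eqref{B-est01} still apply) and the new remainder satisfies
\[
\|\widetilde\Rc\|_{H^r}\lesssim \|\S\vp\|_{H^r}\sum_{n\geq 1}C(n,r)|c_n|\|\vp_x\|_{W^{1,\infty}}^{2n}.
\]
The self-adjoint/commutator energy manipulations of Section~\ref{sec:local} then transfer verbatim (with $r$ in place of $s$ and $\S\vp$ in place of $\vp$), producing
\[
\frac{d}{dt}\|\S\vp(t)\|_{H^r}^2 \lesssim \|\S\vp(t)\|_{H^r}^2\sum_{n\geq 1}C(n,r)|c_n|\|\vp_x(t)\|_{W^{1,\infty}}^{2n}.
\]

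To close the estimate I invoke Lemma~\ref{sharp}: since $\|\vp_x\|_{W^{1,\infty}}\leq\|\vp_x\|_{W^{r,\infty}}\lesssim \ve_1(t+1)^{-1/2}$, the $n=1$ contribution dominates and bounds the right-hand side by $C\ve_1^2(t+1)^{-1}\|\S\vp\|_{H^r}^2$, the higher-$n$ terms decaying strictly faster. Integrating, and using $\S\vp|_{t=0}=x\partial_x\vp_0$ with $\|x\partial_x\vp_0\|_{H^r}\leq\ve_0$, Gr\"onwall's inequality yields $\|\S\vp(t)\|_{H^r}\lesssim \ve_0(1+t)^{C\ve_1^2/2}$. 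The bootstrap hypothesis $\ve_1\lesssim\ve_0^{1/3}$ gives $C\ve_1^2/2\lesssim\ve_0^{2/3}$, which is $\leq p_0$ provided $\ve_0$ is chosen sufficiently small relative to $p_0=10^{-4}$. The main obstacle is the bookkeeping in the first step: showing rigorously that $\S+(2-\alpha)$ reproduces the paralinearized structure of Proposition~\ref{varphinonlindecomp01} with $\S\vp$ in the high-frequency slot, and that the cross-terms arising when $\S$ hits the symbols $B^j[\vp]$ (effectively placing $\S\vp$ in a low-frequency slot and $\vp$ in the high-frequency slot) can be absorbed into $\widetilde\Rc$. These cross-terms are controlled using $\|\vp\|_{H^s}\lesssim \ve_1(t+1)^{p_0}$ together with the integrability in time of the small factor $\|\vp_x\|_{W^{1,\infty}}^{2n-1}$ supplied by Lemma~\ref{sharp}.
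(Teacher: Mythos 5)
Your overall strategy coincides with the paper's: commute $\S$ through the (para-linearized) equation, run the Section~\ref{sec:local} energy estimate on $\S\vp$ in $H^r$, and close with Gr\"onwall using Lemma~\ref{sharp}. One small inaccuracy first: $\S+(2-\alpha)$ is not exactly a derivation on the multilinear terms. Since the full symbol $i(\eta_1+\dots+\eta_{2n+1})\Tb_n$ is homogeneous of degree $2n+2-\alpha$, integration by parts in the frequencies gives $(\S+(2-\alpha))\Nc_n=\sum_j\Nc_n(\vp,\dots,\S\vp,\dots,\vp)-2n\,\Nc_n$; the extra multiple of $\Nc_n$ is harmless (it is bounded exactly like the remainder via \eqref{estimate_N}), but it does appear and should be accounted for.

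The genuine gap is in your treatment of the cross-terms in which $\S\vp$ sits in a low-frequency (symbol) slot. You propose to bound these by $\|\vp\|_{H^s}$ times $\|\vp_x\|_{W^{1,\infty}}^{2n-1}$ and appeal to the ``integrability in time'' of the latter. For the dominant case $n=1$ that factor is $\|\vp_x\|_{W^{1,\infty}}\lesssim\ve_1(t+1)^{-1/2}$, which is \emph{not} integrable; moreover, putting $\S\vp$ into the symbol forces a factor $\|\px\S\vp\|_{L^\infty}\lesssim\|\S\vp\|_{H^r}$, so your bound for the cross-term is of size $\ve_1^2(t+1)^{-1/2+p_0}\|\S\vp\|_{H^r}$. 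Fed into Gr\"onwall this produces growth like $\exp\bigl(C\ve_1^2(t+1)^{1/2+p_0}\bigr)$, which destroys the conclusion $\|\S\vp(t)\|_{H^r}\lesssim\ve_0(t+1)^{p_0}$. This is exactly why the paper's commutator bounds, e.g.\ $\|\px[\S,T_{B^{0}[\vp]}]\vp\|_{H^k}\lesssim F(\cdot)\,\|\vp_x\|_{W^{2,\infty}}\|\vp_x\|_{W^{r,\infty}}\|\S\vp\|_{H^{r}}$, carry \emph{two} decaying sup-norm factors of $\vp_x$ together with $\|\S\vp\|_{H^r}$: in these cross-terms the output frequency is comparable to that of the high-frequency factor $\vp$, so for $k\le r$ one must place that factor in $W^{k+1,\infty}\subset W^{r+1,\infty}$ (which decays like $(t+1)^{-1/2}$ by Lemma~\ref{sharp}) and put $\S\vp$ into $L^2$-based norms, rather than placing $\vp$ in $H^s$ and $\S\vp$ in $L^\infty$. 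With that redistribution the Gr\"onwall coefficient becomes $\ve_1^2(t+1)^{-1}$ and one recovers the admissible growth $(t+1)^{C\ve_1^2}\le(t+1)^{p_0}$. Your argument as written does not close.
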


\begin{lemma}
Under the bootstrap assumptions,
\[
(t + 1)^{- p_0} \left(\|\vp(t)\|_{H^s} + \|x \px h(t)\|_{H^r} \right) \lesssim \ve_0\qquad \text{for $0\le t \le T$}.
\]
\end{lemma}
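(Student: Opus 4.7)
The proof splits into the $H^s$-bound on $\vp$ and the $H^r$-bound on $x\px h$; I would handle the $H^s$-bound first since it feeds into the second. For the $H^s$-bound, my plan is to insert the sharp decay from Lemma~\ref{sharp} into the Gr\"onwall inequality \eqref{exp-ineq}: since $r\geq 1$, Lemma~\ref{sharp} gives $\|\vp_x(\tau)\|_{W^{1,\infty}}^{2}\lesssim \ve_1^{2}(\tau+1)^{-1}$, so the exponent in \eqref{exp-ineq} is bounded by $C\ve_1^{2}\log(t+1)$ once $\ve_1$ is small enough to keep $F$ bounded. This gives $\|\vp(t)\|_{H^s}^{2}\lesssim \ve_0^{2}(t+1)^{C\ve_1^{2}}$, and the bootstrap hypothesis $\ve_1\lesssim\ve_0^{1/3}\ll 1$ allows me to force $C\ve_1^{2}\le 2p_0$, yielding $(t+1)^{-p_0}\|\vp(t)\|_{H^s}\lesssim\ve_0$.

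For the weighted bound on $x\px h$, the key is to first derive an algebraic identity relating it to $\S\vp$ and the nonlinearity of \eqref{Para-eq}. Writing $U(t)=e^{At\px|\px|^{1-\alpha}}$ so that $\vp=U(t)h$, the Fourier-side derivative $\partial_\xi e^{iAt\xi|\xi|^{1-\alpha}}=iA(2-\alpha)t|\xi|^{1-\alpha}e^{iAt\xi|\xi|^{1-\alpha}}$ produces the commutator identity $U(-t)\,x\,U(t)=x-A(2-\alpha)t|\px|^{1-\alpha}$. Combining this with $\pt h=-U(-t)\Nc(\vp)$, where
\[
\Nc(\vp)=\px T_{B^{0}[\vp]}\vp+\px|\px|^{1-\alpha}T_{B^{1-\alpha}[\vp]}\vp+\Rc
\]
is the right-hand side of \eqref{Para-eq}, and applying $U(-t)$ to $\S\vp=(2-\alpha)t\pt\vp+x\px\vp$, I obtain
\[
x\px h(t)=U(-t)\S\vp(t)+(2-\alpha)t\,U(-t)\Nc(\vp)(t).
\]
Since $U(-t)$ is unitary on every Sobolev space, this reduces the task to bounding $\|\S\vp(t)\|_{H^r}$ (which is Lemma~\ref{weightedE}) and $t\,\|\Nc(\vp)(t)\|_{H^r}$. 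The symbol bounds \eqref{B-est01}, $L^2$-boundedness of Weyl para-products, and the remainder estimate \eqref{R1est01} applied at Sobolev index $r\geq 4$ together yield
\[
\|\Nc(\vp)(t)\|_{H^r}\lesssim \|\vp_x\|_{W^{1,\infty}}^{2}\,\|\vp\|_{H^{r+1}}
\]
once $\|\vp_x\|_{W^{1,\infty}}$ is small enough that the $n=1$ term dominates the geometric-type series in \eqref{B-est01} and \eqref{R1est01}. Inserting $\|\vp_x\|_{W^{1,\infty}}^{2}\lesssim\ve_1^{2}(t+1)^{-1}$ from Lemma~\ref{sharp} and $\|\vp\|_{H^{r+1}}\leq\|\vp\|_{H^s}\lesssim\ve_0(t+1)^{p_0}$ from the first step, I get $t\,\|\Nc(\vp)(t)\|_{H^r}\lesssim\ve_0\ve_1^{2}(t+1)^{p_0}$, and adding the $\S\vp$ contribution gives $(t+1)^{-p_0}\|x\px h(t)\|_{H^r}\lesssim \ve_0+\ve_0\ve_1^{2}\lesssim\ve_0$.

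The one genuinely delicate point is the factor of $t$ multiplying the nonlinear term in the identity for $x\px h$: absorbing it requires exactly two factors of $\vp_x$ in $L^\infty$, so that the quadratic $(t+1)^{-1}$ decay supplied by Lemma~\ref{sharp} just cancels the $t$ while leaving enough room for the slow $(t+1)^{p_0}$ growth of $\|\vp\|_{H^s}$. This is precisely the role of the quadratic-in-$\vp_x$ structure of the para-linearized symbols $B^0$ and $B^{1-\alpha}$ guaranteed by \eqref{B-est01}, and it is why the sharp pointwise decay of Lemma~\ref{sharp} (rather than the weaker $L^2$-conservation of $\vp$) is the essential input at this stage.
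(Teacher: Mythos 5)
Your proposal is correct and follows essentially the same route as the paper: the $H^s$-bound comes from feeding the sharp decay of Lemma~\ref{sharp} into the Gr\"onwall inequality \eqref{exp-ineq} and using $\ve_1^2\ll p_0$, and the weighted bound comes from the same Fourier-side identity expressing $x\px h$ in terms of $\S\vp$ and $t$ times the nonlinearity (the paper's \eqref{xipartialh}), with the factor of $t$ absorbed by the quadratic $L^\infty$ decay of $\vp_x$ exactly as you describe. The only cosmetic discrepancy is the Sobolev index on the nonlinear term (the paper's \eqref{estimate_N} uses $\|\vp\|_{H^{j+2}}$ rather than $H^{r+1}$), which is immaterial since $r+2\le s$.
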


\begin{proof}
Recalling the energy estimate \eqref{exp-ineq},
we have from Lemma \ref{sharp} that $F(\|\vp_x\|_{W^{1, \infty}}) \lesssim 1$,
which implies that
\[
\|\vp(t)\|_{H^s}^2 \lesssim \ve_0^2 (t + 1)^{C \ve_1^2},
\]
so once $\ve_1^2 \ll p_0$, we have
\[
(t + 1)^{- p_0} \|\vp(t)\|_{H^s} \lesssim \ve_0.
\]

It follows from \eqref{def-h}, the definition of $\S$ in \eqref{def-S}, and \eqref{GSQGfront} that
\begin{align}
\label{xipartialh}
\begin{split}
\F[x \px h](\xi, t) & = - \hat{h}(\xi, t) - \xi \partial_\xi \hat{h}(\xi, t),\\
\xi \partial_\xi \hat{h}(\xi, t) & = \xi e^{- i A t \xi |\xi|^{1 - \alpha}} \left(- i (2 - \alpha) A t |\xi|^{1 - \alpha} \hat{\vp}(\xi, t) + \partial_\xi \hat{\vp}(\xi, t)\right)\\
& = e^{- i A t \xi |\xi|^{1 - \alpha}} \left(- (2 - \alpha) t \hat{\vp}_t(\xi, t) - (2 - \alpha) t \hat{\Nc}(xi, t) + \xi \partial_\xi \hat{\vp}(\xi, t)\right)\\
& = e^{- i A t \xi |\xi|^{1 - \alpha}} \left(- \widehat{\S \vp}(\xi, t) - \hat{\vp}(\xi, t) - (2 - \alpha) t \hat{\Nc}(\xi, t)\right),
\end{split}
\end{align}
where
\begin{align*}
\Nc(x, t) & = \sum_{n = 1}^\infty \frac{c_n}{2n + 1} \px \int_{\R^{2n + 1}} \Tb_n(\etab_n) \hat{\vp}(\eta_1, t) \hat{\vp}(\eta_2, t) \dotsm \hat{\vp}(\eta_{2n + 1}, t) e^{i (\eta_1 + \eta_2 + \dotsb + \eta_{2n + 1}) x} \diff{\etab_n}
\end{align*}
denotes the nonlinear term of \eqref{GSQGfront}. It follows from \eqref{xipartialh} that
\begin{align*}
\F_x[x \px h](\xi, t) & = - e^{- i A t \xi |\xi|^{1 - \alpha}} \left(\widehat{\S \vp}(\xi, t) + (2 - \alpha) t \Nc(\hat{\vp}(\xi, t))\right).
\end{align*}
We observe that in view of Proposition \ref{varphinonlindecomp01}, the nonlinear term $\Nc$ satisfies
\begin{align}
\label{estimate_N}
\left\||\px|^j \Nc\right\|_{L^2} \lesssim \sum_{n = 1}^\infty C(n,s) \|\vp_x\|_{W^{2, \infty}}^{2n} \|\vp\|_{H^{j + 2}} \qquad \text{for  $j = 0, \dots, r$}.
\end{align}
By the bootstrap assumptions and Lemmas \ref{sharp}--\ref{weightedE}, we find that
$(t + 1)^{- p_0} \|x \px h(t)\|_{H^r} \lesssim \ve_0$.
\end{proof}

\begin{lemma}[Nonlinear dispersive estimate]
\label{nonlindisp}
Under the bootstrap assumptions, the solutions of \eqref{GSQGfront} satisfies
\[
\|\vp(t)\|_Z \lesssim \ve_0\qquad \text{for $0\le t \le T$}.
\]
\end{lemma}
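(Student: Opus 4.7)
The plan is to control the profile $h$ defined in \eqref{def-h}, since $|\hat{h}(\xi,t)| = |\hat{\vp}(\xi,t)|$ and $h$ satisfies the Duhamel identity
\[
\hat{h}(\xi,t) = \hat{\vp}_0(\xi) + \int_0^t e^{-iAs\xi|\xi|^{1-\alpha}} \widehat{\Nc}(\xi,s)\,ds,
\]
where $\Nc$ is the nonlinearity from \eqref{GSQGfront}. The initial contribution is controlled by $\|\vp_0\|_Z \lesssim \ve_0$ directly from the assumption on $\vp_0$, so the task reduces to showing that $(|\xi|+|\xi|^{r+3})$ times the time integral is bounded by $C\ve_1^3$, which is $\lesssim \ve_0$ since $\ve_1 \lesssim \ve_0^{1/3}$.

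I would use the multilinear expansion \eqref{GSQGfront} to write $\Nc = \sum_{n\ge 1}\Nc_n$ where $\Nc_n$ is $(2n+1)$-linear in $\vp$; the $n\ge 2$ terms carry extra pointwise smallness from Lemma \ref{sharp} which, combined with the convergence of $\sum|c_n|\|\vp_x\|_{W^{1,\infty}}^{2n}$, reduces matters to the cubic piece $n=1$. Substituting $\hat{\vp} = e^{iAs\xi|\xi|^{1-\alpha}}\hat{h}$, the cubic integral takes the form
\[
\int_0^t \int_{\R^2} e^{iAs\Phi(\xi,\eta_1,\eta_2)} m(\xi,\eta_1,\eta_2)\, \hat{h}(\eta_1,s)\hat{h}(\eta_2,s)\hat{h}(\xi-\eta_1-\eta_2,s)\,d\eta_1\,d\eta_2\,ds,
\]
with phase $\Phi = -\xi|\xi|^{1-\alpha} + \eta_1|\eta_1|^{1-\alpha} + \eta_2|\eta_2|^{1-\alpha} + (\xi-\eta_1-\eta_2)|\xi-\eta_1-\eta_2|^{1-\alpha}$ and symbol $m$ derived from $\Tb_1$ in \eqref{defTn}.

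The next step is to perform a Littlewood--Paley decomposition $\hat{h} = \sum_k \psi_k \hat{h}$ on each factor and split into regions. In the non-resonant region, where $|\Phi|$ is bounded below by a suitable power of the largest frequency, I would integrate by parts in $s$ via $\partial_s(e^{iAs\Phi}) = iA\Phi e^{iAs\Phi}$, picking up a boundary term at $s=t$ (handled with Lemma \ref{sharp} and Lemma \ref{multilinear}) and an interior term involving $\partial_s\hat{h}$, which is itself of cubic order and thus gains an additional $\ve_1^2$. In the resonant region, where integration by parts in time fails, I would integrate by parts in one of the frequency variables $\eta_j$; the resulting factor $\partial_{\eta_j}\hat{h}$ is controlled in $L^2$ by $\|x\px h\|_{H^r}$, which by equation \eqref{xipartialh} and Lemmas \ref{weightedE}--5.5 is bounded by $\ve_0(s+1)^{p_0}$ modulo nonlinear errors. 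The explicit $s$-factor produced this way is compensated by the $s^{-1/2}$ dispersive decay from Lemma \ref{sharp} together with the interpolation Lemma \ref{interpolation} applied to extract an $L^\infty_\xi$ bound from $L^2$ data. Summing the pieces and running the $n\ge 2$ terms through the same analysis with their extra smallness closes the estimate uniformly in $\xi$.

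The main obstacle is the cubic resonant regime: the phase $\Phi$ has a non-trivial stationary set in the frequency space (this is typical for dispersion relations $\xi|\xi|^{1-\alpha}$ with $1<\alpha<2$), and on this set both integration-by-parts strategies degenerate simultaneously. The delicate point is to balance the $s$-growth coming from $x\px h$ against the stationary-phase decay $s^{-1/2}$, and to verify that the output $|\xi|^{r+3}$ weight in the $Z$-norm can be absorbed by a combination of $\|\vp\|_{H^s}$ (for high output frequencies, using that $s \gg r+3$) and $\|\vp\|_Z$ itself (for low output frequencies, via the factor $|\xi|$). Keeping $r=8$ and $p_0 = 10^{-4}$ as in \eqref{defsrp} ensures all exponents add up favorably and the time integral is absolutely convergent with total bound $\lesssim \ve_1^3 \lesssim \ve_0$.
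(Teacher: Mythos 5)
Your overall architecture (reduce to the cubic term, Littlewood--Paley decompose, integrate by parts in time where $\Phi$ is bounded below and in frequency where $\nabla_\eta\Phi$ is bounded below, control $\partial_\eta \hat h$ by $\|x\px h\|_{H^r}$) matches the skeleton of the paper's argument, and your treatment of the very low/high output frequencies and of the quintic-and-higher terms is consistent with what the paper does. But there is a genuine gap at the space--time resonances, and it is not a matter of ``balancing exponents.'' At $(\eta_1,\eta_2)=(\xi,\xi)$ (so $\xi-\eta_1-\eta_2=-\xi$), and at its permutations, one has simultaneously $\Phi(\xi,\eta_1,\eta_2)=0$ and $\partial_{\eta_1}\Phi=\partial_{\eta_2}\Phi=0$, so both integration-by-parts devices degenerate at the same point. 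The contribution of a neighborhood of radius $\varrho(s)\sim (s+1)^{-1/2+}$ of this point to the Duhamel integral is of size $\ve_1^3\, |\xi|\,|\Tb_1'(\xi,\xi,-\xi)|\, |\hat\vp(\xi,s)|^2|\hat\vp(\xi,s)|\cdot \varrho(s)^2\cdot s \sim \ve_1^3 (s+1)^{-1}$ per unit time, which is \emph{not} integrable: working directly with $\hat h(\xi,t)=\hat\vp_0(\xi)+\int_0^t e^{-iAs\xi|\xi|^{1-\alpha}}\widehat{\Nc}(\xi,s)\,\diff s$ as you propose, the right-hand side grows like $\ve_1^3\log(t+1)$ and the uniform bound $\|\vp(t)\|_Z\lesssim\ve_0$ cannot close.

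The missing idea is modified scattering. The paper introduces a real phase
\[
\Theta(\xi,t)=-At\xi|\xi|^{1-\alpha}+\xi\int_0^t\left[\beta_1\Tb_1'(\xi,\xi,-\xi)+\beta_2\Tb_1'(\xi,-\xi,\xi)+\beta_3\Tb_1'(-\xi,\xi,\xi)\right]|\hat\vp(\xi,\tau)|^2\diff\tau
\]
and estimates $\hat v=e^{i\Theta}\hat\vp$ instead of $\hat h$; the $\beta_i(t)$ are chosen to be exactly the measure of the resonant neighborhoods $A_2,A_3,A_4$, so that the non-integrable resonant piece of the cubic term is cancelled against $i\Theta_t\hat\vp$, leaving only a Taylor-remainder error of size $O(\varrho(t)^{5/2})$ which \emph{is} integrable. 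Since $\Theta$ is real, $|\hat v|=|\hat\vp|$ and the $Z$-norm is unaffected by the phase, which is why this renormalization is legitimate. Without this step (or an equivalent normal-form device), your resonant-regime analysis cannot succeed, because the obstruction is an honest $1/s$ tail, not a loss that finer exponent bookkeeping can absorb. A secondary, smaller point: at the pure space resonance $(\xi/3,\xi/3,\xi/3)$ one has $\nabla_\eta\Phi=0$ but $\Phi=(3^{\alpha-1}-1)\xi|\xi|^{1-\alpha}\neq0$, so there the paper integrates by parts in \emph{time} (your proposal assigns frequency integration by parts to ``the resonant region'' wholesale, which would fail there as well).
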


Proposition \ref{bootstrap} then follows by combining Lemmas \ref{sharp}--\ref{nonlindisp}.

\section{Sharp dispersive estimate}
\label{sec:sharp}

In this section, we prove Lemma \ref{sharp}. We first state a dispersive estimate for the linearized evolution operator $e^{At\partial_x|\partial_x|^{1-\alpha}}$. This estimate is similar to ones in \cite{CGI19, HSZ18p} and we omit the proof.
We recall that $P_k$ is the frequency-localization operator with symbol $\psi_k$ defined in \eqref{defpsik}.

\begin{lemma}\label{disp}
For $t> 0$ and $f \in L^2$, we have the linear dispersive estimate
\begin{align}
\label{LocDis}
\begin{split}
\|e^{At\partial_x|\partial_x|^{1-\alpha}}P_k f\|_{L^\infty} &\lesssim (t+1)^{-1/2} 2^{\alpha k / 2}\|\widehat{P_k f}\|_{L^\infty_\xi}
\\
&+ (t + 1)^{-3 / 4} 2^{(3 \alpha / 4 - 1) k} \left[\|P_k (x \px f)\|_{L^2} + \|\tilde{P}_k f\|_{L^2}\right].
\end{split}
\end{align}
\end{lemma}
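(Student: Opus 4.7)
The plan is to bound, uniformly in $x\in\R$, the localized oscillatory integral
\[
I_k(x,t) = \int_\R e^{i\Phi(\xi)}\psi_k(\xi)\hat f(\xi)\,d\xi, \qquad \Phi(\xi) = x\xi + At\,\xi|\xi|^{1-\alpha}.
\]
Phase analysis gives $\Phi'(\xi) = x + A(2-\alpha)t|\xi|^{1-\alpha}$ and $\Phi''(\xi) = A(2-\alpha)(1-\alpha)t|\xi|^{-\alpha}\sgn(\xi)$, so on the dyadic shell $|\xi|\sim 2^k$ we have $|\Phi''|\sim t\,2^{-\alpha k}$, and a stationary point $\xi_0$ with $|\xi_0|\sim 2^k$ exists if and only if $\sgn(x) = -\sgn(A)$ and $|x|\sim t\,2^{(1-\alpha)k}$; the natural stationary-phase width is $\delta_0 = |\Phi''(\xi_0)|^{-1/2}\sim t^{-1/2}2^{\alpha k/2}$. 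If $\xi_0$ does not lie in $\operatorname{supp}\psi_k$ then $|\Phi'|\gtrsim |x| + t\,2^{(1-\alpha)k}$ throughout, and two integrations by parts give a bound strictly better than either term of the claimed estimate, so I would focus on the stationary case.

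In that case I would introduce a smooth cutoff $\chi_{\delta_0}$ that equals $1$ on $\{|\xi-\xi_0|\le \delta_0\}$ and is supported in $\{|\xi-\xi_0|\le 2\delta_0\}$, and split $I_k = I_k^{\mathrm{near}} + I_k^{\mathrm{far}}$. For $I_k^{\mathrm{near}}$ the classical stationary-phase formula yields
\[
|I_k^{\mathrm{near}}|\lesssim |t\Phi''(\xi_0)|^{-1/2}\,\|\widehat{P_kf}\|_{L^\infty}\lesssim t^{-1/2}2^{\alpha k/2}\|\widehat{P_kf}\|_{L^\infty},
\]
producing the first term on the right-hand side of \eqref{LocDis}. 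For $I_k^{\mathrm{far}}$, a single integration by parts against $\partial_\xi(1/i\Phi')$ gives
\[
I_k^{\mathrm{far}} = -\int_\R e^{i\Phi}\,\partial_\xi\!\left[\frac{(1-\chi_{\delta_0})\psi_k\hat f}{i\Phi'}\right]d\xi,
\]
and I would estimate the three resulting pieces using the lower bound $|\Phi'(\xi)|\gtrsim t\,2^{-\alpha k}|\xi-\xi_0|$ on $\{|\xi|\sim 2^k\}$.

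For the main piece — the one containing $\psi_k\partial_\xi\hat f$ — Cauchy--Schwarz produces a factor $(2^{\alpha k}/t)\,\delta_0^{-1/2}$, and then the Fourier identity $\xi\partial_\xi\hat f = -\hat f - \widehat{x\partial_x f}$ combined with $|\xi|\sim 2^k$ on $\operatorname{supp}\psi_k$ yields $\|\psi_k\partial_\xi\hat f\|_{L^2}\lesssim 2^{-k}\bigl[\|\tilde P_kf\|_{L^2}+\|P_k(x\partial_x f)\|_{L^2}\bigr]$. Substituting $\delta_0\sim t^{-1/2}2^{\alpha k/2}$ produces exactly $t^{-3/4}2^{(3\alpha/4-1)k}\bigl[\|\tilde P_kf\|_{L^2}+\|P_k(x\partial_x f)\|_{L^2}\bigr]$, matching the second term of \eqref{LocDis}. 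The remaining two pieces (the boundary term from $\chi_{\delta_0}'$ and the $\Phi''/\Phi'^2$ term from differentiating $1/\Phi'$) I would bound by pulling out $\|\widehat{P_kf}\|_{L^\infty}$ and integrating the resulting $|\xi-\xi_0|^{-j}$ weight over $\{|\xi-\xi_0|\gtrsim\delta_0\}\cap\{|\xi|\sim 2^k\}$ explicitly; both produce $t^{-1/2}2^{\alpha k/2}\|\widehat{P_kf}\|_{L^\infty}$ and are absorbed into the first term. The short-time case $t\lesssim 1$ follows from the trivial bound $\|I_k\|_{L^\infty}\le \|\psi_k\hat f\|_{L^1}\lesssim 2^k\|\widehat{P_kf}\|_{L^\infty}$, absorbed into the $(t+1)$ prefactors.

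The main obstacle is the double bookkeeping in the far-region estimate: the $\delta_0^{-1/2}$ from Cauchy--Schwarz, the $2^{-k}$ from the Fourier identity, and the $2^{\alpha k}/t$ from the lower bound on $\Phi'$ must combine to land precisely on the exponents $t^{-3/4}2^{(3\alpha/4-1)k}$, and simultaneously the auxiliary pieces (the $\chi_{\delta_0}'$ boundary and $\Phi''/\Phi'^2$) must be routed through $\|\widehat{P_kf}\|_{L^\infty}$ rather than any $L^2$ norm — otherwise they create contributions with incompatible power counting that are not controlled by the right-hand side of \eqref{LocDis}.
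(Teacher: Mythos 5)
The paper does not actually prove this lemma --- it is stated with the remark that the proof is ``similar to ones in \cite{CGI19, HSZ18p}'' --- and your stationary-phase argument is precisely the route taken in those references. In the main regime it is correct: the split at the scale $\delta_0=|\Phi''(\xi_0)|^{-1/2}\sim t^{-1/2}2^{\alpha k/2}$, the trivial bound $\delta_0\|\widehat{P_kf}\|_{L^\infty}$ on the near region, and one integration by parts on the far region with Cauchy--Schwarz on the $\partial_\xi\hat f$ piece (converted via $\xi\partial_\xi\hat f=-\hat f-\widehat{x\px f}$) do land exactly on the two terms of \eqref{LocDis} with prefactors $t^{-1/2}$ and $t^{-3/4}$. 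Two local corrections: differentiating the far-region integrand produces \emph{four} pieces, not three, and the one where $\partial_\xi$ falls on $\psi_k$ should also go through Cauchy--Schwarz (it is absorbed by the $\|\tilde{P}_kf\|_{L^2}$ term; estimating it in $L^\infty$ costs a logarithm in $2^k/\delta_0$); and in the nonstationary case you must stop at \emph{one} integration by parts, since a second produces $\partial_\xi^2\hat f$, which is controlled by nothing on the right-hand side.

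The genuine gap is the passage from $t^{-1/2},\,t^{-3/4}$ to $(t+1)^{-1/2},\,(t+1)^{-3/4}$. The threshold below which the stationary-phase analysis degenerates is not $t\lesssim 1$ but $\delta_0\gtrsim 2^k$, i.e.\ $t\lesssim 2^{(\alpha-2)k}$; there the trivial bound $\|\psi_k\hat f\|_{L^1}\lesssim 2^k\|\widehat{P_kf}\|_{L^\infty}$ is dominated by $t^{-1/2}2^{\alpha k/2}\|\widehat{P_kf}\|_{L^\infty}$ (this is exactly the inequality $2^k\le t^{-1/2}2^{\alpha k/2}$), so the version of \eqref{LocDis} with pure powers of $t$ holds for all $t>0$ and all $k$. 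Your claim that for $t\lesssim 1$ the trivial bound is ``absorbed into the $(t+1)$ prefactors,'' however, requires $2^k\lesssim 2^{\alpha k/2}$, which holds only for $k\le 0$ when $\alpha<2$. Indeed \eqref{LocDis} as literally stated fails for $k>0$ as $t\to 0^+$: with $\hat f=\psi_k$ the left-hand side is $\sim 2^k$ while the right-hand side is $\sim 2^{\alpha k/2}+2^{(3\alpha/4-1/2)k}\ll 2^k$. So what your argument proves is the estimate with $t^{-1/2}$ and $t^{-3/4}$, which coincides with \eqref{LocDis} whenever $t\gtrsim 1$ or $k\le 0$ --- the only situations in which the lemma is invoked (in the proof of Lemma \ref{sharp}, the blocks with $k>0$ and $t$ small fall into the Sobolev-embedding case \eqref{sharp1+}). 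You should state this restriction explicitly rather than assert that the short-time case follows from the trivial bound for all $k$.
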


Using this lemma, we have the following.

\begin{proof} [Proof of Lemma \ref{sharp}]
Take the function $f$ in Lemma \ref{disp} to be $\px^j h$ for $j=1, 2, \dotsc, r+1$. Since $e^{A t \px |\px|^{1 - \alpha}}$ and $P_k$ commute, and
\[
x\px^{j+1}h=\px^j(x\px h)-j\px^j h,
\]
we have that
\begin{align*}
\|P_k \partial_x^j \vp\|_{L^\infty}&\lesssim (t+1)^{-1/2}\|\F(P_k |\partial_x|^{\frac{\alpha}{2}+j}\vp)\|_{L^\infty_\xi}
\\
&+(t+1)^{-3/4} \left[2^{\frac{3}{4} \alpha k}\||\px|^{j-1}P_k(x\px h) \|_{L^2}
+\|\tilde P_k (|\px|^{j - 1 + \frac{3 \alpha}{4}}\vp)\|_{L^2} \right].
\end{align*}

It follows from \eqref{xipartialh} that
\[
\||\px|^{j-1}P_k(x\px h) \|_{L^2}\lesssim \| |\px|^{j-1} P_k \vp\|_{L^2}+\| |\px|^{j-1} P_k \S\vp\|_{L^2}+t\|P_k|\px|^{j-1}\mathcal{N}\|_{L^2}.
\]
We first note that if $k\in \Z_-$, then $(t+1)^{-1/4+p_0}2^{\frac34 \alpha k}\lesssim 1$, so
\begin{align}\label{sharp1-}
\begin{split}
\|P_k\px^j\vp\|_{L^\infty} & \lesssim (t+1)^{-1/2} 2^{\alpha k/2}\|\psi_k(\xi)|\xi|^{j}\hat \vp(\xi)\|_{L^\infty_\xi} \\
&\qquad +(t+1)^{-1/2-p_0}[\||\px|^{j-1}\tilde P_k\vp\|_{L^2}+\||\px|^{j-1}P_k\mathcal S\vp\|_{L^2}+t\||\px|^{j-1}P_k\mathcal N\|_{L^2}].
\end{split}
\end{align}
If $k\in \Z_+$ and $(t+1)^{-1/4+p_0}2^{\frac34 \alpha k}\lesssim 1$, then
\begin{align}
\begin{split}
\|P_k\px^j\vp\|_{L^\infty} & \lesssim (t+1)^{-1/2} 2^{(\alpha / 2 - 2) k}\|\psi_k(\xi)|\xi|^{j+2}\hat \vp(\xi)\|_{L^\infty_\xi}\\
& \qquad +(t+1)^{-1/2-p_0}[\||\px|^{j-1}\tilde P_k\vp\|_{L^2}+\||\px|^{j-1}P_k\mathcal S\vp\|_{L^2}+t\||\px|^{j-1}P_k\mathcal N\|_{L^2}].
\end{split}
\end{align}
Finally, if $k\in \Z_+$ and $(t+1)^{-1/4+p_0}2^{\frac34 \alpha k}\gtrsim 1$, then $2^{-\alpha k}\lesssim (t+1)^{-\frac13+\frac43p_0}$, and
\begin{align}
\label{sharp1+}
\begin{split}
\|P_k\px^j\vp\|_{L^\infty}&\lesssim \||\xi|^j\psi_k(\xi)\hat\vp(\xi)\|_{L^1_\xi}\lesssim \||\xi|^{j-s}\psi_k(\xi)\|_{L^2}\|\tilde P_k\vp\|_{H^s}
\\
&\lesssim 2^{(j-s-\frac12)k}\|\tilde P_k\vp\|_{H^s}\lesssim (t+1)^{-1 - p_0}\|\tilde P_k\vp\|_{H^s}.
\end{split}
\end{align}

Using the bootstrap assumptions, the estimate \eqref{estimate_N}, and \eqref{sharp1-}--\eqref{sharp1+} in the corresponding ranges of $k$, then summing the results over $k \in \Z$ and $0\le j \le r+1$, we obtain that $\|\vp_x(t)\|_{W^{r + 1, \infty}} \lesssim \ve_1 (t + 1)^{- 1 / 2}$.
\end{proof}

\section{Scaling vector field estimate}
In this section, we prove the scaling vector field estimate in Lemma \ref{weightedE}. A direct calculation gives the following commutators.
\begin{lemma}
\label{S-comm}
Let $\varphi(x,t)$ be a Schwartz distribution on $\R^2$ such that $|\px|^{1 - \alpha} \varphi(x,t)$ is a Schwartz distribution
and $\S$ the vector field \eqref{def-S}. Then
\begin{align*}
&[\S,\partial_x]\vp = -\partial_x \vp,\qquad
[\S,|\partial_x|^{1-\alpha}\partial_x]\vp = - (2 - \alpha) |\partial_x|^{1-\alpha} \px \vp,
\\
&[\S,\partial_t]\vp = -(2-\alpha)\partial_t\vp,\qquad
[\S, \partial_t - A|\partial_x|^{1 - \alpha} \px]\vp=-(2-\alpha)(\partial_t-A|\partial_x|^{1 - \alpha} \px) \vp.
\end{align*}
\end{lemma}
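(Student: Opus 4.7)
The plan is purely computational, since each identity reduces to a direct manipulation. Write $\S = (2-\alpha)t\partial_t + x\partial_x$. Because $t\partial_t$ commutes with every purely spatial operator (in particular with $\partial_x$ and with the Fourier multiplier $|\partial_x|^{1-\alpha}\partial_x$), and because $x\partial_x$ commutes with $\partial_t$, the commutators split cleanly into one-variable calculations that can be handled independently.

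For the first identity, I would use $[t\partial_t,\partial_x] = 0$ to reduce $[\S,\partial_x]\vp$ to $[x\partial_x,\partial_x]\vp$, and then apply the product rule: $x\partial_x(\partial_x\vp) - \partial_x(x\partial_x\vp) = x\vp_{xx} - \vp_x - x\vp_{xx} = -\partial_x\vp$. The third identity is the mirror image: $[x\partial_x,\partial_t]=0$ reduces $[\S,\partial_t]\vp$ to $(2-\alpha)[t\partial_t,\partial_t]\vp$, which equals $-(2-\alpha)\partial_t\vp$ by a one-line product-rule computation.

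The main (and only nontrivial) step is the second identity, $[\S,|\partial_x|^{1-\alpha}\partial_x]\vp$. Again $[t\partial_t,|\partial_x|^{1-\alpha}\partial_x]=0$, so I need only compute $[x\partial_x,m(D)]\vp$ where $m(\xi) = i\xi|\xi|^{1-\alpha}$. The plan is to pass to the Fourier side, using
\[
\F[x\partial_x f](\xi) = -\hat f(\xi) - \xi\partial_\xi \hat f(\xi),
\]
and then observe that $m$ is positively homogeneous of degree $2-\alpha$, so Euler's identity yields $\xi m'(\xi) = (2-\alpha)m(\xi)$ (which one checks by the explicit differentiation $m'(\xi) = i(2-\alpha)|\xi|^{1-\alpha}$). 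A direct expansion then gives
\[
\F\bigl[(x\partial_x)\,m(D)\vp\bigr](\xi) - \F\bigl[m(D)(x\partial_x)\vp\bigr](\xi) = -\xi m'(\xi)\hat\vp(\xi) = -(2-\alpha)m(\xi)\hat\vp(\xi),
\]
which on the physical side is precisely $-(2-\alpha)|\partial_x|^{1-\alpha}\partial_x\vp$. The distributional hypothesis that $|\partial_x|^{1-\alpha}\vp$ is Schwartz ensures all Fourier manipulations are legitimate.

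The fourth identity is then immediate: by linearity of the bracket,
\[
[\S,\partial_t - A|\partial_x|^{1-\alpha}\partial_x]\vp = [\S,\partial_t]\vp - A[\S,|\partial_x|^{1-\alpha}\partial_x]\vp = -(2-\alpha)\bigl(\partial_t - A|\partial_x|^{1-\alpha}\partial_x\bigr)\vp,
\]
combining the third and second identities. I expect no serious obstacle: the only place care is required is the homogeneity/Euler-identity step for the fractional symbol, which is why the hypothesis on $|\partial_x|^{1-\alpha}\vp$ is included in the statement.
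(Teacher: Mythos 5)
Your proposal is correct and is exactly the direct calculation the paper invokes (the paper gives no details beyond ``a direct calculation''): splitting $\S$ into $(2-\alpha)t\partial_t$ and $x\partial_x$, doing the product-rule computations for $[\,x\partial_x,\partial_x]$ and $[\,t\partial_t,\partial_t]$, and handling $|\partial_x|^{1-\alpha}\partial_x$ on the Fourier side via the homogeneity of the symbol $i\xi|\xi|^{1-\alpha}$ of degree $2-\alpha$. The Euler-identity step and the final linearity argument are both sound, so nothing is missing.
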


\begin{proof}[Proof of Lemma \ref{weightedE}]
Applying $\S$ to equation \eqref{Para-eq} and using Lemma \ref{S-comm}, we get that
\[
(\S \vp)_t-A|\partial_x|^{1-\alpha}\partial_x(\S\vp)+\partial_x  T_{B^0[\vp]} \S\vp + |\partial_x|^{1-\alpha}[ T_{B^{1-\alpha}[\vp]}\S\vp]_x+\S\Rc=\text{commutators},
\]
where the commutators include
\[
\partial_x [\S, T_{B^0[\vp]}]\vp,\quad [\S, \partial_x] T_{B^0[\vp]}\vp,\quad [\S, |\partial_x|^{1-\alpha}\partial_x]\big(T_{B^{1-\alpha}[\vp]}\vp\big),\quad |\partial_x|^{1-\alpha}\partial_x\Big([\S, T_{B^{1-\alpha}[\vp]}]\vp\Big).
\]
By Kato-Ponce type commutator estimates and \eqref{B-est01}, when $k\leq r$ we obtain that
\begin{align*}
\|\partial_x [\S, T_{B^0[\vp]}]\vp\|_{H^k}&\lesssim F(\|\vp_x\|_{W^{2,\infty}}) (\|\vp_x\|_{W^{2,\infty}}) \|\vp_x\|_{W^{r, \infty}}\|\S\vp\|_{H^{r}}\\
&\lesssim F(\|\vp_x\|_{W^{2,\infty}}) (\|\vp_x\|_{W^{r,\infty}})^2\|\S\vp\|_{H^{r}},
\end{align*}
\begin{align*}
\|[\S, \partial_x] T_{B^0[\vp]}\vp\|_{H^k}=\|- \px T_{B^0[\vp]}\vp\|_{H^k}\lesssim F(\|\vp_x\|_{W^{2,\infty}}) (\|\vp_x\|_{W^{r,\infty}})^2\|\vp_x\|_{H^{k + 1}},
\end{align*}
\begin{align*}
\big\|[\S, |\partial_x|^{1-\alpha}\partial_x]\big(T_{B^{1-\alpha}[\vp]}\vp\big)\big\|_{H^k}&=\big\|\big(T_{B^{1-\alpha}[\vp]}\vp\big)_x+|\partial_x|^{1-\alpha}\partial_x\big(T_{B^{1-\alpha}[\vp]}\vp\big)\big\|_{H^k}\\
&\lesssim F(\|\vp_x\|_{W^{2,\infty}}) (\|\vp_x\|_{W^{r,\infty}})^2\|\vp\|_{H^{k+1}},
\end{align*}
\begin{align*}
\bigg\| |\partial_x|^{1-\alpha}\partial_x\Big([\S, T_{B^{1-\alpha}[\vp]}]\vp\Big)\bigg\|_{H^k}\lesssim F(\||\partial_x|^{1-\alpha}\vp\|_{W^{2,\infty}}+\|\vp\|_{W^{2,\infty}}) (\|\vp_x\|_{W^{1,\infty}})^2\|\S\vp\|_{H^{r}}.
\end{align*}

Thus, straightforward energy estimates give for $j = 0, \dotsc, r$ that
\[
\frac {\diff}{\diff{t}} \|\S \vp(t)\|_{\dot{H}^j}^2 \lesssim (\|\vp_x\|_{W^{r , \infty}} )^2 F(\|\S \vp\|_{H^r} + \|\vp\|_{H^s}) \|\S\vp\|_{H^j}^2.
\]
Then using Lemma \ref{sharp} and integrating in time $t$, we find that
\[
\|\S \vp(t)\|_{H^r}^2 \lesssim \ve_0^2 (t + 1)^{2 p_0},
\]
which proves the lemma.
\end{proof}

\section{Nonlinear dispersive estimate}
\label{sec:nonlindisp}

In this section, we prove the dispersive estimate in Lemma \ref{nonlindisp}, where the $Z$-norm is defined in \eqref{def-Z}.

When $|\xi|<(t + 1)^{- p_0}$, Lemma \ref{interpolation} and the conservation of the $L^2$-norm of $\vp$ gives
\begin{align*}
|(|\xi| + |\xi|^{r + 3})\hat\vp(\xi,t)|^2&\lesssim (|\xi| + |\xi|^{r + 3})^2|\xi|^{-1}\|\hat\vp\|_{L^2_\xi}(|\xi|\|\partial_\xi\hat\vp\|_{L^2_\xi}+\|\hat\vp\|_{L^2_\xi})\\
&\lesssim (|\xi| + |\xi|^{r + 3}) \|\vp\|_{L^2}(\|\S\vp\|_{L^2}+\|\vp\|_{L^2})\\
&\lesssim \ve_0^2.
\end{align*}
Let $p_1 = 10^{-6}$. When $|\xi| > s (t + 1)^{p_1}$, Lemma \ref{interpolation} gives
\begin{align*}
|(|\xi| + |\xi|^{r + 3}) \hat\vp(\xi,t)|^2 &\lesssim \frac{(|\xi| + |\xi|^{r + 3})^2}{|\xi|^{s + 1}} \|\vp\|_{H^s} (\|\S\vp\|_{L^2} + \|\vp\|_{L^2})\\
&\lesssim |\xi|^{2r + 5 - s} \ve_0^2 (t + 1)^{2p_0} \\
&\lesssim \ve_0^2 .
\end{align*}
Thus, we only need to consider the frequency range
\begin{align}
(t + 1)^{- p_0}\leq |\xi|\leq (t + 1)^{p_1}.
\label{freq_range}
\end{align}
In the following, we fix $\xi$ in this range and denote by $\cutoffxi(\xi, t)$ a smooth cut-off function that is compactly supported in a small neighborhood of
\begin{equation}
\left\{(\xi, t)\in \R^2: \text{$(t+1)^{-p_0}\le|\xi|\le(t+1)^{p_1}$, $0 \le t \le T$}\right\}.
\label{cutoff}
\end{equation}

Taking the Fourier transform of \eqref{GSQGfront}, we obtain that
\begin{align}
\begin{split}
&\hat \vp_t(\xi,t) + i A' \xi \iint_{\R^2} \Tb_1'(\eta_1,\eta_2, \xi - \eta_1 - \eta_2) \hat\vp(\xi-\eta_1-\eta_2,t)\hat\vp(\eta_1,t)\hat\vp(\eta_2,t) \diff{\eta_1} \diff{\eta_2}
\\
&\qquad\qquad\qquad\qquad +\widehat{\Nc_{\geq5}(\vp)}(\xi,t)
 = i A \xi |\xi|^{1 - \alpha} \hat{\vp}(\xi,t),
\end{split}
\label{phihateq}
\end{align}
where $A' = - {A}/[6 (3 - \alpha)]$ and
\begin{align*}
\Tb_1'(\eta_1,\eta_2, \eta_3) &= \frac{(2 - \alpha) (3 - \alpha)}{A} \Tb_1(\eta_1, \eta_2, \eta_3)\\
& = |\eta_1|^{3 - \alpha} + |\eta_2|^{3 - \alpha} + |\eta_3|^{3 - \alpha} + |\eta_1 + \eta_2 + \eta_3|^{3 - \alpha} - |\eta_1 + \eta_2|^{3 - \alpha} - |\eta_1 + \eta_3|^{3 - \alpha} - |\eta_2 + \eta_3|^{3 - \alpha},\\
\Nc_{\geq 5}(\vp)(x,t) &= \sum_{n = 2}^\infty \frac{c_n}{2 n + 1} \px \int_{\R^{2 n + 1}} \Tb_n(\etab_n) \hat{\vp}(\eta_1, t) \hat{\vp}(\eta_2, t) \dotsm \hat{\vp}(\eta_{2n + 1}, t) e^{i (\eta_1 + \eta_2 + \dotsb + \eta_{2n + 1}) x} \diff{\etab_n}.
\end{align*}

The main difficulty is in the estimate of the cubic term in \eqref{phihateq}. In Section~\ref{8.1}, we introduce a phase shift
to account for the modified scattering of the solution and carry out a dyadic decomposition of the cubic term. We then consider high frequencies in Section~\ref{8.2}, nonresonant frequencies in Section~\ref{8.3}, near-resonant frequencies in Section~\ref{8.4}, and resonant frequencies in Section~\ref{8.5}.
Finally, in Section \ref{higherorder} we give the easier estimate of the higher-degree remainder term in \eqref{phihateq}.

\subsection{Modified scattering}
\label{8.1}

Define
\[
\Theta(\xi, t) = - A t \xi |\xi|^{1 - \alpha} + \xi \int_0^t \left[\beta_1(t) \Tb_1'(\xi, \xi, - \xi) + \beta_2(t) \Tb_1'(\xi, -\xi, \xi) + \beta_3(t) \Tb_1'(-\xi, \xi, \xi)\right] |\hat{\vp}(\xi, \tau)|^2 \diff{\tau},
\]
where the real-valued functions $\beta_1(t)$, $\beta_2(t)$, and $\beta_3(t)$ will be chosen in Section \ref{sec:spacetime}, and
let
\[
\hat{v}(\xi, t) = e^{i \Theta(\xi, t)} \hat{\vp}(\xi, t).
\]

Then, using \eqref{phihateq}, we obtain that
\begin{equation}
\hat v_t(\xi, t) = e^{i \Theta(\xi, t)} [\hat\vp_t(\xi, t) + i \Theta_t(\xi, t) \hat\vp(\xi, t)] = U_1(\xi, t) + U_2(\xi, t) - e^{i \Theta(\xi, t)} \widehat{\Nc_{\geq 5}(\vp)}(\xi, t),
\label{vteq}
\end{equation}
where
\begin{align}
\label{defU1U2}
\begin{split}
U_1(\xi, t) &=e^{i\Theta(\xi, t)}\bigg\{- i A' \xi \iint_{\R^2}  \Tb_1'(\eta_1,\eta_2, \xi - \eta_1 - \eta_2) \hat\vp(\xi-\eta_1-\eta_2, t)\hat\vp(\eta_1, t)\hat\vp(\eta_2, t) \diff{\eta_1} \diff{\eta_2}\\
&\qquad +i\xi \left[\beta_1(t)\Tb_1'(\xi, \xi, -\xi)+\beta_2(t)\Tb_1'(\xi, -\xi, \xi)+\beta_3(t)\Tb_1'(-\xi,\xi,\xi)\right]|\hat \vp(\xi, t)|^2\hat \vp(\xi, t) \bigg\},\\
U_2(\xi, t) &= \hat v(\xi,t)\bigg[  i\xi \int_0^t \left[\beta_1'(t)\Tb_1'(\xi, \xi, -\xi)+\beta'_2(t)\Tb_1'(\xi, -\xi, \xi)+\beta'_3(t)\Tb_1'(-\xi,\xi,\xi)\right]|\hat \vp(\xi, \tau)|^2 \diff{\tau}\bigg].
\end{split}
\end{align}
The coefficient of $\hat v$ in the bracket in the term $U_2$ is purely imaginary, so it leads to a phase shift in $\hat{v}$ that does not affect its $Z$-norm, and we get from \eqref{vteq} that
\begin{align}
\label{vpZ}
\begin{split}
\|\vp(t)\|_{Z} &= \|(|\xi|+|\xi|^{r+3}) \hat\vp(\xi, t)\|_{L^\infty_\xi}=\|(|\xi|+|\xi|^{r+3}) \hat v(\xi, t)\|_{L^\infty_\xi}\\
&\lesssim \int_0^t \|(|\xi|+|\xi|^{r+3})U_1(\xi, \tau)\|_{L^\infty_\xi}+\|(|\xi|+|\xi|^{r+3})\widehat{\mathcal{N}_{\geq5}(\vp)}(\xi, \tau)\|_{L^\infty_\xi} \diff \tau.
\end{split}
\end{align}
We estimate $U_1$ in Sections~\ref{8.2}--\ref{8.5} and take care of the term $e^{i\Theta(\xi, t)}\widehat{\Nc_{\geq5}(\vp)}$ in Section \ref{higherorder}.

To start with, we recall that $h(x,t) =e^{- A t \partial_x |\partial_x|^{1 - \alpha}}\vp(x,t)$ is defined in \eqref{def-h}.
Then, suppressing the dependence of $\vp$ and $h$ on the time variable $t$, the first integral in $U_1$ can be written in terms of $h$ as
\begin{multline*}
\iint_{\R^2}  \Tb_1'(\eta_1,\eta_2, \xi - \eta_1 - \eta_2) \hat\vp(\xi-\eta_1-\eta_2)\hat\vp(\eta_1)\hat\vp(\eta_2) \diff{\eta_1} \diff{\eta_2}\\
=\iint_{\R^2} \Tb_1'(\eta_1, \eta_2, \xi - \eta_1 - \eta_2) e^{i A t \Phi(\xi,\eta_1,\eta_2)} \hat h(\xi-\eta_1-\eta_2) \hat h(\eta_1)\hat h(\eta_2)  \diff{\eta_1} \diff{\eta_2},
\end{multline*}
where
\begin{equation}
\Phi(\xi,\eta_1,\eta_2) = (\xi-\eta_1-\eta_2) |\xi-\eta_1-\eta_2|^{1 - \alpha} + \eta_1 |\eta_1|^{1 - \alpha} + \eta_2 |\eta_2|^{1 - \alpha} - \xi |\xi|^{1 - \alpha}.
\label{defPhi}
\end{equation}
We will not need to consider the second integral in $U_1$ until we use it to account for modified scattering in Section~\ref{sec:spacetime}.

To carry out the dyadic decomposition, we let $ h_j=P_j h$ and $ \vp_j=P_j\vp$, where $P_j$ is the projection onto frequencies of order $2^j$ with symbol $\psi_j$ defined in \eqref{defpsik}, and rewrite the integral in each dyadic block as
\begin{align}
\label{cubdyadic}
 \iint_{\R^2}   \Tb_1'(\eta_1, \eta_2, \xi - \eta_1 - \eta_2) e^{i A t \Phi(\xi,\eta_1,\eta_2)} \hat h_{j_1}(\eta_1)\hat h_{j_2}(\eta_2) \hat h_{j_3}(\xi-\eta_1-\eta_2) \diff{\eta_1} \diff{\eta_2} .
\end{align}
In the following, we fix $\xi\in \R$ satisfying \eqref{freq_range} and estimate \eqref{cubdyadic} in different regimes of $(j_1,j_2,j_3)$.

\subsection{High frequencies}
\label{8.2}

When
\[
\max\{j_1,j_2,j_3\} \gtrsim 10^{-3}\log_2 |t+1|>0,
\]
we can estimate the cubic terms \eqref{cubdyadic} by using Lemma \ref{multilinear},  with the $L^\infty$-norm placed on the lowest derivative term.
There are, in total, $r+6=13$ derivatives shared by three factors of $\vp$. Thus, we can ensure that the term with least derivatives has at most four derivatives.

To be more specific, introducing the cutoff function $\cutoffxi(\xi, t)$ to restrict attention to the frequency range \eqref{cutoff}, using H\"{o}lder's inequality, Sobolev embedding, and the bootstrap assumptions, we obtain the estimate
\begin{align*}
 & \left\| \xi (|\xi|+|\xi|^{r + 3})\cutoffxi(\xi, t)  \iint_{\R^2}   \Tb_1'(\eta_1, \eta_2, \xi - \eta_1 - \eta_2) e^{i A t \Phi(\xi,\eta_1,\eta_2)} \hat h_{j_1}(\eta_1)\hat h_{j_2}(\eta_2) \hat h_{j_3}(\xi-\eta_1-\eta_2)  \diff{\eta_1} \diff{\eta_2} \right\|_{L^\infty_\xi}\\
 \lesssim ~~& (t + 1)^{(r+7-s)10^{-3}}\|\vp_{\min}\|_{L^2}\|\vp_{\med}\|_{L^{\infty}}\|\vp_{\max}\|_{H^s}\\
\lesssim ~~& (t + 1)^{(r+7-s)10^{-3}} \|\vp_{j_1}\|_{H^s}\|\vp_{j_2}\|_{H^s}\|\vp_{j_3}\|_{H^s},
\end{align*}
where $\max$, $\med$, $\min$ represent the maximum, median, and the minimum of  $j_1$, $j_2$, $j_3$. From \eqref{defsrp},
\[
(r+7-s)10^{-3}<-1.1,
\]
so the right-hand-side of this equation is summable over $j_1$, $j_2$, $j_3$ and the sum is integrable in $t$ on $(0,\infty)$.

\subsection{Nonresonant frequencies}
\label{8.3}

We now only need to consider the case when
\[
\max\{j_1,j_2,j_3\}<10^{-3}\log_2(t+1).
\]
The frequencies are nonresonant if $|j_1-j_3|>1$ or $|j_2-j_3|>1$ and, without loss of generality, we assume that $|j_1-j_3|>1$.
We recall that $h$ is defined in \eqref{def-h} and the symbol $\tilde{\psi}_k$ is defined in \eqref{defpsik}.

From \eqref{defPhi}, we have
\begin{equation}\label{denom1}
\partial_{\eta_1}\Phi(\xi,\eta_1,\eta_2) = (2 - \alpha) \left[|\eta_1|^{1 - \alpha} - |\xi - \eta_1 - \eta_2|^{1 - \alpha}\right].
\end{equation}
Since $|\eta_1|$ and $|\xi-\eta_1-\eta_2|$ are in different dyadic blocks, we have
\[
\big||\eta_1|-|\xi-\eta_1-\eta_2|\big|\gtrsim \max\{|\eta_1|, |\xi-\eta_1-\eta_2|\},
\]
so $|\partial_{\eta_1}\Phi| \gtrsim 2^{(1 - \alpha) \min\{j_1, j_3\}}$.

Using an integration by parts, we have
\begin{align*}
&  \iint_{\R^2}    \Tb_1'(\eta_1, \eta_2, \xi - \eta_1 - \eta_2) e^{i A t \Phi(\xi,\eta_1,\eta_2)} \hat h_{j_1}(\eta_1)\hat h_{j_2}(\eta_2) \hat h_{j_3}(\xi-\eta_1-\eta_2) \diff{\eta_1} \diff{\eta_2}\\
 =~&   \iint_{\R^2}    \frac{\Tb_1'(\eta_1, \eta_2, \xi - \eta_1 - \eta_2)}{i A t\partial_{\eta_1}\Phi(\xi,\eta_1,\eta_2)}\partial_{\eta_1} e^{i A t \Phi(\xi,\eta_1,\eta_2)} \hat h_{j_1}(\eta_1)\hat h_{j_2}(\eta_2) \hat h_{j_3}(\xi-\eta_1-\eta_2) \diff{\eta_1} \diff{\eta_2}\\
 =~& - \frac{W_1 + W_2 + W_3}{A},
\end{align*}
where
\[
W_1(\xi, t)=  \iint_{\R^2} \partial_{\eta_1}\left[ \frac{\Tb_1'(\eta_1, \eta_2, \xi - \eta_1 - \eta_2)}{it\partial_{\eta_1}\Phi(\xi,\eta_1,\eta_2)}\right] e^{i A t \Phi(\xi,\eta_1,\eta_2)} \hat h_{j_1}(\eta_1)\hat h_{j_2}(\eta_2) \hat h_{j_3}(\xi-\eta_1-\eta_2) \diff{\eta_1} \diff{\eta_2},
\]
\[
W_2(\xi, t)=  \iint_{\R^2} \left[ \frac{\Tb_1'(\eta_1, \eta_2, \xi - \eta_1 - \eta_2)}{it\partial_{\eta_1}\Phi(\xi,\eta_1,\eta_2)}\right] e^{i A t \Phi(\xi,\eta_1,\eta_2)} \hat h_{j_1}(\eta_1)\hat h_{j_2}(\eta_2) \partial_{\eta_1}\hat h_{j_3}(\xi-\eta_1-\eta_2) \diff{\eta_1} \diff{\eta_2},
\]\[
W_3(\xi, t)=  \iint_{\R^2} \left[ \frac{\Tb_1'(\eta_1, \eta_2, \xi - \eta_1 - \eta_2)}{it\partial_{\eta_1}\Phi(\xi,\eta_1,\eta_2)}\right] e^{i A t \Phi(\xi,\eta_1,\eta_2)} \partial_{\eta_1} \hat h_{j_1}(\eta_1)\hat h_{j_2}(\eta_2) \hat h_{j_3}(\xi-\eta_1-\eta_2) \diff{\eta_1} \diff{\eta_2}.
\]

{\bf Estimate of $W_1$.}
Since
\begin{align*}
\|W_1\|_{L^\infty_\xi}\lesssim \|\F^{-1}({W}_1)\|_{L^1},
\end{align*}
it suffices to estimate the $L^1_x$ norm of
\[
\iiint_{\R^3} e^{i\xi x}   \partial_{\eta_1}\left[ \frac{\Tb_1'(\eta_1, \eta_2, \xi - \eta_1 - \eta_2)}{it\partial_{\eta_1}\Phi(\xi,\eta_1,\eta_2)}\right] e^{i A t \Phi(\xi,\eta_1,\eta_2)} \hat h_{j_1}(\eta_1)\hat h_{j_2}(\eta_2) \hat h_{j_3}(\xi-\eta_1-\eta_2) \diff{\eta_1} \diff{\eta_2}\diff \xi.
\]
By direct calculation,
\[
\begin{aligned}
& \partial_{\eta_1}\frac{\Tb_1'(\eta_1, \eta_2, \xi - \eta_1 - \eta_2)}{\partial_{\eta_1}\Phi(\xi, \eta_1, \eta_2)}\\
=~& \frac{3 - \alpha}{2 - \alpha} \cdot \frac{\eta_1 |\eta_1|^{1 - \alpha} + (\xi - \eta_1) |\xi - \eta_1|^{1 - \alpha} - (\xi - \eta_1 - \eta_2) |\xi - \eta_1 - \eta_2|^{1 - \alpha} - (\eta_1 + \eta_2) |\eta_1 + \eta_2|^{1 - \alpha}}{|\eta_1|^{1 - \alpha} - |\xi - \eta_1 - \eta_2|^{1 - \alpha}}\\
&  + \frac{\alpha - 1}{2 - \alpha} \cdot \frac{|\eta_1|^{3 - \alpha} + |\eta_2|^{3 - \alpha} + |\xi - \eta_1 - \eta_2|^{3 - \alpha} + |\xi|^{3 - \alpha} - |\eta_1 + \eta_2|^{3 - \alpha} - |\xi - \eta_1|^{3 - \alpha} - |\xi - \eta_2|^{3 - \alpha}}{(|\eta_1|^{1 - \alpha} - |\xi - \eta_1 - \eta_2|^{1 - \alpha})^2}\\
& \qquad \cdot \bigg[\frac{\eta_1}{|\eta_1|^{\alpha + 1}} + \frac{\xi - \eta_1 - \eta_2}{|\xi - \eta_1 - \eta_2|^{\alpha + 1}}\bigg]\\
=~& \frac{\kappa_1(\eta_1,\eta_2,\xi - \eta_1 - \eta_2)}{2 - \alpha} + \frac{\alpha - 1}{2 - \alpha} \cdot \kappa_2(\eta_1, \eta_2, \xi - \eta_1 - \eta_2),
\end{aligned}
\]
where
\begin{align*}
\kappa_1(\eta_1,\eta_2,\eta_3)&=\frac{\partial_{\eta_1}\Tb_1'(\eta_1,\eta_2,\eta_3)-\partial_{\eta_3}\Tb_1'(\eta_1,\eta_2,\eta_3)}{|\eta_1|^{1 - \alpha} - |\eta_3|^{1 - \alpha}},
\\
\kappa_2(\eta_1,\eta_2,\eta_3)&=\Tb_1'(\eta_1,\eta_2,\eta_3)  \frac{\frac{\eta_1}{|\eta_1|^{\alpha + 1}} + \frac{\eta_3}{|\eta_3|^{\alpha + 1}}}{(|\eta_1|^{1 - \alpha} - |\eta_3|^{1 - \alpha})^2}.
\end{align*}

Making a change of variable $\eta_3=\xi-\eta_1-\eta_2$, we see that it suffices to estimate the trilinear operator
\begin{align*}
\frac1{it}\iiint_{\R^3}e^{i(\eta_1+\eta_2+\eta_3) x}   \left[\kappa_1(\eta_1,\eta_2,\eta_3)+\kappa_2(\eta_1,\eta_2,\eta_3)\right] \hat \vp_{j_1}(\eta_1)\hat \vp_{j_2}(\eta_2) \hat \vp_{j_3}(\eta_3) \diff{\eta_1} \diff{\eta_2}\diff\eta_3,
\end{align*}
with symbol
\[
   \left[\kappa_1(\eta_1,\eta_2,\eta_3)+\kappa_2(\eta_1,\eta_2,\eta_3)\right]\psi_{j_1}(\eta_1)\psi_{j_2}(\eta_2)\psi_{j_3}(\eta_3).
\]
From Lemma~\ref{multilinear}, this trilinear operator is bounded on $L^2\times L^2 \times L^\infty \to L^1$ by
\begin{align}
\label{kappaest}
\begin{split}
&\left\|\left[\kappa_1(\eta_1,\eta_2,\eta_3)+\kappa_2(\eta_1,\eta_2,\eta_3)\right]\psi_{j_1}(\eta_1)\psi_{j_2}(\eta_2)\psi_{j_3}(\eta_3)\right\|_{S^\infty}\\
\lesssim~&\Big(\|  \partial_{\eta_1}\Tb_1'(\eta_1,\eta_2,\eta_3)\tilde\psi_{j_1}(\eta_1)\tilde\psi_{j_2}(\eta_2)\tilde\psi_{j_3}(\eta_3)\|_{S^\infty}\\
&\qquad +\|\partial_{\eta_3}\Tb_1'(\eta_1,\eta_2,\eta_3)\tilde\psi_{j_1}(\eta_1)\tilde\psi_{j_2}(\eta_2)\tilde\psi_{j_3}(\eta_3)\|_{S^\infty}\Big) \cdot\left\|\frac{\psi_{j_1}(\eta_1)\psi_{j_2}(\eta_2)\psi_{j_3}(\eta_3)}{|\eta_1|^{1 - \alpha} - |\eta_3|^{1 - \alpha}}\right\|_{S^\infty}\\
&+\bigg(\left\|\frac{\eta_1 \cdot \Tb_1'(\eta_1,\eta_2,\eta_3)}{|\eta_1|^{\alpha + 1}}\tilde\psi_{j_1}(\eta_1)\tilde\psi_{j_2}(\eta_2)\tilde\psi_{j_3}(\eta_3)\right\|_{S^\infty}\\
& \qquad +\left\|  \frac{\eta_3 \cdot \Tb_1'(\eta_1, \eta_2, \eta_3)}{|\eta_3|^{\alpha + 1}}\tilde\psi_{j_1}(\eta_1)\tilde\psi_{j_2}(\eta_2)\tilde\psi_{j_3}(\eta_3)\right\|_{S^\infty}\bigg)\cdot\left\|\frac{\psi_{j_1}(\eta_1)\psi_{j_2}(\eta_2)\psi_{j_3}(\eta_3)}{(|\eta_1|^{1 - \alpha} - |\eta_3|^{1 - \alpha})^2}\right\|_{S^\infty},
\end{split}
\end{align}
where $\tilde{\psi}_k$ is defined in \eqref{defpsik}.

To estimate the terms on the right-hand side of \eqref{kappaest}, we prove the following lemmas.

\begin{lemma}\label{log-log}
If $|j_1-j_3|>1$ and $m \in \Z_+$, then
\[
\left\|\frac{1}{(|\eta_1|^{1 - \alpha} - |\eta_3|^{1 - \alpha})^m}\psi_{j_1}(\eta_1)\psi_{j_2}(\eta_2)\psi_{j_3}(\eta_3)\right\|_{S^\infty} \lesssim 2^{m (\alpha - 1) \min\{j_1, j_3\}}.
\]
\end{lemma}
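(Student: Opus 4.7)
The plan is to reduce the three-variable symbol estimate to a two-variable one by factoring off the $\eta_2$-dependence, then to apply the interpolation inequality \eqref{SymEst} after extracting the size of the denominator from the separation hypothesis $|j_1-j_3|>1$ together with the monotonicity of $t\mapsto t^{1-\alpha}$ for $\alpha>1$.

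First, the symbol factors as
\[
\psi_{j_2}(\eta_2)\cdot M(\eta_1,\eta_3),\qquad M(\eta_1,\eta_3):=\frac{\psi_{j_1}(\eta_1)\psi_{j_3}(\eta_3)}{(|\eta_1|^{1-\alpha}-|\eta_3|^{1-\alpha})^m},
\]
so its inverse Fourier transform splits as a product and $\|\F^{-1}\psi_{j_2}\|_{L^1}=O(1)$ uniformly in $j_2$ by Littlewood--Paley scaling. Hence, by part (i) of Lemma~\ref{multilinear}, it suffices to bound $\|M\|_{S^\infty_{(\eta_1,\eta_3)}}$. By the symmetry $\eta_1\leftrightarrow\eta_3$ I may assume $j_3\geq j_1+2$, so $\min\{j_1,j_3\}=j_1$.

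Next, on $\supp M$ we have $|\eta_1|\approx 2^{j_1}$ and $|\eta_3|\approx 2^{j_3}$. Since $\alpha>1$, the map $t\mapsto t^{1-\alpha}$ is strictly decreasing, and the gap $j_3\geq j_1+2$ ensures
\[
\bigl||\eta_1|^{1-\alpha}-|\eta_3|^{1-\alpha}\bigr|\approx |\eta_1|^{1-\alpha}\approx 2^{(1-\alpha)j_1}.
\]
Rescaling $\eta_1=2^{j_1}u$, $\eta_3=2^{j_3}v$, the denominator becomes $2^{m(1-\alpha)j_1}$ times the smooth expression $\bigl(|u|^{1-\alpha}-2^{(1-\alpha)(j_3-j_1)}|v|^{1-\alpha}\bigr)^m$, which is bounded below by a positive constant depending only on $\alpha$ on $\supp\psi_0\times\supp\psi_0$. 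Therefore all its mixed derivatives in the rescaled variables are uniformly bounded, and translating back yields
\[
\bigl|\partial_{\eta_1}^a\partial_{\eta_3}^b M(\eta_1,\eta_3)\bigr|\lesssim 2^{m(\alpha-1)j_1}\,2^{-aj_1-bj_3}
\]
uniformly in $j_1,j_3$, on a region of measure $\approx 2^{j_1+j_3}$.

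Finally, I would apply the first form of \eqref{SymEst} in the variables $(\eta_1,\eta_3)$, placing the two derivatives on the larger frequency $\eta_3$:
\[
\|M\|_{S^\infty}\lesssim \|M\|_{L^1}^{1/4}\,\|\partial_{\eta_3}^2 M\|_{L^1}^{1/2}\,\|\partial_{\eta_1}^2\partial_{\eta_3}^2 M\|_{L^1}^{1/4}.
\]
The pointwise bounds above together with the support measure give $\|M\|_{L^1}\lesssim 2^{m(\alpha-1)j_1}2^{j_1+j_3}$, $\|\partial_{\eta_3}^2 M\|_{L^1}\lesssim 2^{m(\alpha-1)j_1}2^{j_1-j_3}$, and $\|\partial_{\eta_1}^2\partial_{\eta_3}^2 M\|_{L^1}\lesssim 2^{m(\alpha-1)j_1}2^{-j_1-j_3}$, and combining these produces $\|M\|_{S^\infty}\lesssim 2^{m(\alpha-1)j_1}\cdot 2^{(j_1-j_3)/2}\leq 2^{m(\alpha-1)\min\{j_1,j_3\}}$, as claimed. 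The main subtlety is the correct choice of orientation in \eqref{SymEst}: landing the two derivatives on the larger-frequency variable produces a decaying factor $2^{(j_1-j_3)/2}$, whereas the opposite orientation would create a spurious growing factor. Verifying that the rescaled denominator stays uniformly bounded away from zero once $|j_1-j_3|>1$ is where the hypothesis $\alpha>1$ enters.
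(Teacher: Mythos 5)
Your proof is correct and follows essentially the same route as the paper's: both rest on the observation that $|j_1-j_3|>1$ forces $\bigl||\eta_1|^{1-\alpha}-|\eta_3|^{1-\alpha}\bigr|\gtrsim 2^{(1-\alpha)\min\{j_1,j_3\}}$ on the support, after which the $S^\infty$ bound is a standard rescaled-bump estimate for the inverse Fourier transform. Your write-up is in fact more explicit than the paper's (which simply invokes ``oscillatory integral estimates'' on the three-dimensional integral), since you factor off the $\eta_2$-variable and instantiate the estimate via \eqref{SymEst} with the two derivatives correctly placed on the high-frequency variable.
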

\begin{proof}
Since
\[
\big||\eta_1|^{1 - \alpha} - |\eta_3|^{1 - \alpha}\big|\gtrsim 2^{1-\alpha}
\]

By the definition of the $S^\infty$-norm \eqref{Sinf} and the definition of $\psi_k$  \eqref{defpsik}, we have that
\begin{align*}
&\left\|\frac{\psi_{j_1}(\eta_1)\psi_{j_2}(\eta_2)\psi_{j_3}(\eta_3)}{(|\eta_1|^{1 - \alpha} - |\eta_3|^{1 - \alpha})^m}\right\|_{S^\infty}\\
=~&\left\|\iiint_{\R^3} \frac{\psi_{j_1}(\eta_1)\psi_{j_2}(\eta_2)\psi_{j_3}(\eta_3)}{(|\eta_1|^{1 - \alpha} - |\eta_3|^{1 - \alpha})^m} e^{i(y_1\eta_1+y_2\eta_2+y_3\eta_3)}\diff \eta_1\diff\eta_2\diff\eta_3\right\|_{L^1}\\
=~&\iiint_{\R^3}\left|\iiint_{\R^3} \frac{\psi_0(2^{-j_1}\eta_1)\psi_0(2^{-j_2}\eta_2)\psi_0(2^{-j_3}\eta_3)}{(|\eta_1|^{1 - \alpha} - |\eta_3|^{1 - \alpha})^m} e^{i(y_1\eta_1+y_2\eta_2+y_3\eta_3)}\diff \eta_1\diff\eta_2\diff\eta_3\right|\diff y_1\diff y_2\diff y_3\\
\lesssim~& 2^{m (\alpha - 1) \min\{j_1, j_3\}},
\end{align*}
where the last inequality comes from using oscillatory integral estimates, together with the facts that the support of $\psi_0$ is $(-8/5, -5/8)\cup(5/8,8/5)$ and $|j_1-j_3|>1$.
\end{proof}

For the other symbols in \eqref{kappaest}, we have the following estimates.

\begin{lemma} \label{estT1}
For any $j_1, j_2, j_3 \in \Z$,
\begin{align}
\|  \partial_{\eta_1} \Tb_1'(\eta_1, \eta_2, \eta_3)\tilde\psi_{j_1}(\eta_1) \tilde\psi_{j_2}(\eta_2) \tilde\psi_{j_3}(\eta_3)\|_{S^\infty} &\lesssim 2^{(2 - \alpha) \min\{j_2, j_3\}},\label{est_symb1}\\
\|\Tb_1'(\eta_1, \eta_2, \eta_3) \tilde\psi_{j_1}(\eta_1) \tilde\psi_{j_2}(\eta_2) \tilde\psi_{j_3}(\eta_3)\|_{S^\infty} &\lesssim 2^{\min\{j_1,j_2,j_3\} + (2 - \alpha) \med\{j_1,j_2,j_3\}},\label{est_symb1'}
\\
\left\|  \frac{\eta_1 \cdot \Tb_1'(\eta_1, \eta_2, \eta_3)}{|\eta_1|^{\alpha + 1}} \tilde\psi_{j_1}(\eta_1) \tilde\psi_{j_2}(\eta_2) \tilde\psi_{j_3}(\eta_3)\right\|_{S^\infty} &\lesssim 2^{\min\{j_1, j_2, j_3\} + (2 - \alpha) \med\{j_1, j_2, j_3\} - \alpha j_1},\label{est_symb2}\\
\left\|\frac{\Tb_1'(\eta_1, \eta_2, \eta_3)}{\eta_1} \tilde\psi_{j_1}(\eta_1) \tilde\psi_{j_2}(\eta_2) \tilde\psi_{j_3}(\eta_3)\right\|_{S^\infty} &\lesssim 2^{\min\{j_2, j_3\}}.\label{est_symb3}
\end{align}
Furthermore, since $\Tb_1$ is symmetric,
\begin{align*}
\|  \partial_{\eta_3}\Tb_1'(\eta_1, \eta_2, \eta_3) \tilde\psi_{j_1}(\eta_1) \tilde\psi_{j_2}(\eta_2) \tilde\psi_{j_3}(\eta_3)\|_{S^\infty} &\lesssim 2^{(2 - \alpha) \min\{j_1, j_2\}},\\
\left\|  \frac{\eta_3 \cdot \Tb_1'(\eta_1,\eta_2,\eta_3)}{|\eta_3|^{\alpha + 1}} \tilde\psi_{j_1}(\eta_1) \tilde\psi_{j_2}(\eta_2) \tilde\psi_{j_3}(\eta_3)\right\|_{S^\infty} &\lesssim 2^{\min\{j_1, j_2, j_3\} + (2 - \alpha) \med\{j_1, j_2, j_3\} - \alpha j_3},\\
\left\|\frac{\Tb_1'(\eta_1, \eta_2, \eta_3)}{\eta_3} \tilde\psi_{j_1}(\eta_1) \tilde\psi_{j_2}(\eta_2) \tilde\psi_{j_3}(\eta_3)\right\|_{S^\infty} &\lesssim 2^{\min\{j_1, j_2\}}.
\end{align*}
\end{lemma}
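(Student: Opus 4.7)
The key structural observation is that $\Tb_1'$ is the iterated first-order forward difference
\begin{equation*}
\Tb_1'(\eta_1,\eta_2,\eta_3) = \Delta_{\eta_1}\Delta_{\eta_2}\Delta_{\eta_3}\bigl[|x|^{3-\alpha}\bigr]\Big|_{x=0}, \qquad \Delta_\eta f(x) := f(x+\eta) - f(x),
\end{equation*}
as can be read directly from the defining sum over subsets of $\{1,2,3\}$. Differentiation and division by one of the variables preserve this difference structure: one has $\partial_{\eta_1}\Tb_1' = (3-\alpha)\Delta_{\eta_2}\Delta_{\eta_3}[\sgn(x)|x|^{2-\alpha}]\big|_{x=\eta_1}$, and since $\Tb_1'(0,\eta_2,\eta_3)=0$ (the two triples of terms with and without $\eta_1$ cancel in pairs), one may write the averaging identity $\Tb_1'/\eta_1 = \int_0^1 \partial_{\eta_1}\Tb_1'(t\eta_1,\eta_2,\eta_3)\,\diff t$. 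Total symmetry of $\Tb_1'$ in $(\eta_1,\eta_2,\eta_3)$ reduces the three "symmetric" estimates stated after the main four to permutations of the latter.

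My plan is to first reduce each $S^\infty$ bound to a pointwise bound on the support $\{|\eta_k|\sim 2^{j_k}\}$ via a Bernstein-type inequality analogous to \eqref{SymEst}, of the form
\begin{equation*}
\|m\|_{S^\infty} \lesssim \sum_{\beta\in\{0,2\}^3} 2^{\beta_1 j_1+\beta_2 j_2+\beta_3 j_3}\bigl\|\partial_{\eta_1}^{\beta_1}\partial_{\eta_2}^{\beta_2}\partial_{\eta_3}^{\beta_3}m\bigr\|_{L^\infty},
\end{equation*}
which follows from integration by parts in the inverse Fourier representation. Having made this reduction, I split each case into regimes according to the ordering of $(|\eta_1|,|\eta_2|,|\eta_3|)$. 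For the first bound, with $F(x)=(3-\alpha)\sgn(x)|x|^{2-\alpha}$: when $|\eta_1|$ is the largest, a Taylor expansion of $F$ about $\eta_1$ to second order in $(\eta_2,\eta_3)$ gives $|\partial_{\eta_1}\Tb_1'| \sim |\eta_2\eta_3 F''(\eta_1)| \sim 2^{j_2+j_3-\alpha j_1}$, which for $\alpha>1$ is bounded by $2^{(2-\alpha)\min(j_2,j_3)}$; when $|\eta_2|$ or $|\eta_3|$ dominates, the concavity of $x^{2-\alpha}$ on $(0,\infty)$ and the resulting subadditivity of $|F|$ let me estimate the double difference by a constant times $|F(\min(\eta_2,\eta_3))|\sim 2^{(2-\alpha)\min(j_2,j_3)}$; the comparable-frequency regime is then handled by pure scaling. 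A parallel analysis, one difference lower, produces \eqref{est_symb1'} with exponent $\min+(2-\alpha)\med$. The quotient estimates \eqref{est_symb2} and \eqref{est_symb3} follow by combining these pointwise bounds either with the averaging identity for $\Tb_1'/\eta_1$ (whose right-hand side is independent of $j_1$) or with the explicit factor $\eta_1/|\eta_1|^{\alpha+1}\sim 2^{-\alpha j_1}$.

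The main technical obstacle is the bookkeeping required by the Bernstein reduction: taking two extra $\eta_k$-derivatives introduces double (or single) differences of the more singular functions $|x|^{1-\alpha}$ or $|x|^{-\alpha}$, and one must confirm in each regime that their $L^\infty$-bounds, together with the compensating powers $2^{2j_k}$ in the Bernstein inequality, stay within the target. The tightest case in all four estimates is the balanced one $j_1\approx j_2\approx j_3$: here the claimed exponent matches exactly the homogeneous degree of the symbol, leaving no slack, so one must verify that the Taylor/subadditivity arguments are sharp rather than merely sufficient.
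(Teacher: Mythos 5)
Your structural observations are correct ($\Tb_1'$ is an iterated difference of $|x|^{3-\alpha}$, $\Tb_1'(0,\eta_2,\eta_3)=0$, symmetry handles the last three bounds), and your pointwise estimates on the symbols themselves in each frequency regime are essentially right. The gap is in the very first reduction. The Bernstein-type inequality
\[
\|m\|_{S^\infty} \lesssim \sum_{\beta\in\{0,2\}^3} 2^{\beta_1 j_1+\beta_2 j_2+\beta_3 j_3}\bigl\|\partial_{\eta_1}^{\beta_1}\partial_{\eta_2}^{\beta_2}\partial_{\eta_3}^{\beta_3}m\bigr\|_{L^\infty}
\]
requires the second, fourth, and sixth mixed derivatives of the localized symbol to be \emph{bounded} on the support of $\tilde\psi_{j_1}\tilde\psi_{j_2}\tilde\psi_{j_3}$. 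They are not: $\Tb_1'$ contains the terms $|\eta_1+\eta_2|^{3-\alpha}$, $|\eta_1+\eta_2+\eta_3|^{3-\alpha}$, etc., whose singular sets $\{\eta_i+\eta_j=0\}$ and $\{\eta_1+\eta_2+\eta_3=0\}$ intersect the dyadic support whenever two of the $j_k$ are within $O(1)$ of each other --- exactly the balanced regime you single out as the tight case. There $\partial_{\eta_1}^2|\eta_1+\eta_2|^{3-\alpha}=(3-\alpha)(2-\alpha)|\eta_1+\eta_2|^{1-\alpha}\notin L^\infty$ (since $1-\alpha<0$), so the right-hand side of your inequality is infinite; worse, $\partial_{\eta_1}^2\partial_{\eta_2}^2|\eta_1+\eta_2|^{3-\alpha}\sim|\eta_1+\eta_2|^{-1-\alpha}$ is not even locally integrable, so you cannot fall back on the $L^1$-based interpolation \eqref{SymEst} either. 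You flag the appearance of "$|x|^{1-\alpha}$ or $|x|^{-\alpha}$" as bookkeeping, but it is a genuine obstruction to the reduction itself, not to the subsequent regime analysis. Repairing this would require an additional dyadic decomposition in the distances to the singular hyperplanes, which changes the proof substantially.

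The paper avoids taking any derivatives of the singular profile. It uses the oscillatory representation $\Tb_1'(\eta_1,\eta_2,\eta_3)\propto\int_{\R}\zeta^{-3}|\zeta|^{\alpha-1}\sgn\zeta\,\prod_{j=1}^3(1-e^{i\eta_j\zeta})\diff\zeta$, computes $\F^{-1}$ of the localized symbol exactly by Fubini as a $\zeta$-integral of products of shifts and differences of $\F^{-1}[\tilde\psi_{j_k}]$, and bounds the $L^1_y$-norm using only $\int_{\R}|\F^{-1}[\tilde\psi_{j}](y)-\F^{-1}[\tilde\psi_{j}](y+\zeta)|\diff y\lesssim\min\{2^{-j},|\zeta|\}$, then splits the $\zeta$-integral at the thresholds $2^{-j_k}$. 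If you want to keep your finite-difference viewpoint, the analogous fix is to bound $\|\F^{-1}[m]\|_{L^1}$ directly through such a kernel representation rather than through pointwise derivative bounds on $m$.
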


\begin{proof} {\bf 1.} We prove \eqref{est_symb1} first. Using the inverse Fourier transform in $(\eta_1, \eta_2, \eta_3)$, we obtain that
\begin{align*}
& \frac{A}{(2 - \alpha) (3 - \alpha)} \F^{-1}[\partial_{\eta_1}\Tb_1'(\eta_1,\eta_2,\eta_3) \tilde\psi_{j_1}(\eta_1) \tilde\psi_{j_2}(\eta_2) \tilde\psi_{j_3}(\eta_3)]\\
=~&\iiint_{\R^3}e^{i(y_1 \eta_1 + y_2 \eta_2 + y_3 \eta_3)} \partial_{\eta_1} \left[\int_{\R}\frac{\prod_{j=1}^3(1-e^{i\eta_j\zeta})}{\zeta^3} |\zeta|^{\alpha - 1} \sgn{\zeta} \diff \zeta\right] \tilde\psi_{j_1}(\eta_1) \tilde\psi_{j_2}(\eta_2) \tilde\psi_{j_3}(\eta_3) \diff \eta_1\diff \eta_2\diff \eta_3\\
=~&\iiint_{\R^3}  \left[\int_{\R}\frac{-i |\zeta|^\alpha e^{i\eta_1(\zeta+y_1)}(e^{iy_2\eta_2}-e^{i\eta_2(\zeta+y_2)})(e^{iy_3\eta_3}-e^{i\eta_3(\zeta+y_3)})}{\zeta^{3}}\diff \zeta\right] \tilde\psi_{j_1}(\eta_1) \tilde\psi_{j_2}(\eta_2) \tilde\psi_{j_3}(\eta_3) \diff \eta_1\diff \eta_2\diff \eta_3\\
=~& \int_{\R}\frac{-i |\zeta|^\alpha}{\zeta^3} \cdot\left[\F^{-1}[\tilde\psi_{j_1}](y_1+\zeta)\right]\cdot \left[\F^{-1}[\tilde\psi_{j_2}](y_2)-\F^{-1}[\tilde\psi_{j_2}](\zeta+y_2)\right] \left[\F^{-1}[\tilde\psi_{j_3}](y_3)-\F^{-1}[\tilde\psi_{j_3}](\zeta+y_3)\right] \diff \zeta.
\end{align*}

We observe that
\begin{align*}
\left|\F^{-1}[\tilde\psi_{j_1}](y_1+\zeta)\right| &= 2^{j_1}\left|\F^{-1}[\tilde\psi_0](2^{j_1}(y_1+\zeta))\right|,\\
\left|\F^{-1}[\tilde\psi_{j_2}](y_2)-\F^{-1}[\tilde\psi_{j_2}](\zeta+y_2)\right| &= 2^{j_2} \left|\F^{-1}[\tilde\psi_0](2^{j_2} y_2)-\F^{-1}[\tilde\psi_0](2^{j_2}(\zeta+y_2))\right|,\\
\left|\F^{-1}[\tilde\psi_{j_3}](y_3)-\F^{-1}[\tilde\psi_{j_3}](\zeta+y_3)\right| &= 2^{j_3} \left|\F^{-1}[\tilde\psi_0](2^{j_3} y_3)-\F^{-1}[\tilde\psi_0](2^{j_3}(\zeta+y_3))\right|,
\end{align*}
and
\begin{align*}
\int_{\R} \left|\F^{-1}[\tilde\psi_0](2^{j_1}(y_1+\zeta))\right|\diff y_1&\lesssim 2^{-j_1},\\
\int_{\R} \left|\F^{-1}[\tilde\psi_{j_2}](2^{j_2} y_2)-\F^{-1}[\tilde\psi_{j_2}](2^{j_2}(\zeta+y_2))\right| \diff y_2 &\lesssim \min\{2^{-j_2}, |\zeta|\},\\
\int_{\R} \left|\F^{-1}[\tilde\psi_{j_3}](2^{j_3} y_3)-\F^{-1}[\tilde\psi_{j_3}](2^{j_3}(\zeta+y_3))\right|\diff y_3 &\lesssim \min\{2^{-j_3}, |\zeta|\}.
\end{align*}
Therefore, we have
\begin{align*}
&\left\|\F^{-1}[  \partial_{\eta_1}\Tb_1'(\eta_1,\eta_2,\eta_3)\tilde\psi_{j_1}(\eta_1)\tilde\psi_{j_2}(\eta_2)\tilde\psi_{j_3}(\eta_3)]\right\|_{L^1}\\
\lesssim~&\int_{\R}\frac{1}{|\zeta|^{3 - \alpha}} 2^{j_2+j_3}\min\{2^{-j_2}, |\zeta|\} \min\{2^{-j_3}, |\zeta|\} \diff\zeta\\
=~&2^{j_2+j_3}\bigg(\int_{|\zeta|>\max\{2^{-j_2}, 2^{-j_3}\}}\frac1{|\zeta|^{3 - \alpha}}2^{-j_2-j_3}\diff\zeta+\int_{|\zeta|<\max\{2^{-j_2}, 2^{-j_3}\}}\frac1{|\zeta|^{2 - \alpha}}\min\{2^{-j_2}, 2^{-j_3}\}\diff\zeta\bigg)\\
\lesssim~& 2^{(2 - \alpha) \min\{j_2, j_3\}}.
\end{align*}

{\bf 2.} Next, we prove \eqref{est_symb1'} and \eqref{est_symb2}. The proof of \eqref{est_symb1'} is similar to that of \eqref{est_symb1}. We first use the inverse Fourier transform and write
\begin{align*}
& \frac{A}{(2 - \alpha) (3 - \alpha)} \F^{-1}\left[  \Tb_1'(\eta_1,\eta_2,\eta_3)\tilde\psi_{j_1}(\eta_1)\tilde\psi_{j_2}(\eta_2)\tilde\psi_{j_3}(\eta_3)\right]\\
=~&\iiint_{\R^3}e^{i(y_1\eta_1+y_2\eta_2+y_3\eta_3)}   \left[\int_{\R}\frac{\prod_{j=1}^3(1-e^{i\eta_j\zeta})}{\zeta^{3}} |\zeta|^{\alpha - 1} \sgn{\zeta} \diff \zeta\right]\tilde\psi_{j_1}(\eta_1)\tilde\psi_{j_2}(\eta_2)\tilde\psi_{j_3}(\eta_3) \diff \eta_1\diff \eta_2\diff \eta_3\\
=~&\iiint_{\R^3}  \left[\int_{\R}\frac{ (e^{iy_1\eta_1}-e^{i\eta_1(\zeta+y_1)})(e^{iy_2\eta_2}-e^{i\eta_2(\zeta+y_2)})(e^{iy_3\eta_3}-e^{i\eta_3(\zeta+y_3)})}{ |\zeta|^{4 - \alpha}}\diff \zeta\right]\tilde\psi_{j_1}(\eta_1)\tilde\psi_{j_2}(\eta_2) \tilde\psi_{j_3}(\eta_3) \diff \eta_1\diff \eta_2\diff \eta_3\\
=~& \int_{\R}\frac{1 }{|\zeta|^{4 - \alpha}} \left[\F^{-1}[\tilde\psi_{j_1}](y_1)-\F^{-1}[\tilde\psi_{j_1}](\zeta+y_1)\right]\\
& \qquad \cdot \left[\F^{-1}[\tilde\psi_{j_2}](y_2)-\F^{-1}[\tilde\psi_{j_2}](\zeta+y_2)\right] \cdot \left[\F^{-1}[\tilde\psi_{j_3}](y_3)-\F^{-1}[\tilde\psi_{j_3}](\zeta+y_3)\right] \diff \zeta.
\end{align*}
Taking the $L^1$-norm, we obtain
\begin{align*}
&\left\|\F^{-1}[\Tb_1'(\eta_1,\eta_2,\eta_3)\tilde\psi_{j_1}(\eta_1)\tilde\psi_{j_2}(\eta_2)\tilde\psi_{j_3}(\eta_3)]\right\|_{L^1}\\
\lesssim~&\int_{\R} 2^{j_1+j_2+j_3}\frac1{|\zeta|^{4 - \alpha}}\min\{2^{-j_1},|\zeta|\}\min\{2^{-j_2},|\zeta|\}\min\{2^{-j_3},|\zeta|\}\diff\zeta\\
\lesssim~& \int_{|\zeta|>\max\{2^{-j_1},2^{-j_2},2^{-j_3}\}}\frac1{|\zeta|^{4 - \alpha}}\diff\zeta\\*
&+\int_{\med\{2^{-j_1},2^{-j_2},2^{-j_3}\}<|\zeta|<\max\{2^{-j_1},2^{-j_2},2^{-j_3}\}}2^{\min\{j_1,j_2,j_3\}}\frac1{|\zeta|^{3 - \alpha}}\diff \zeta\\*
&+\int_{|\zeta|<\med\{2^{-j_1},2^{-j_2},2^{-j_3}\}}2^{\med\{j_1,j_2,j_3\}+\min\{j_1,j_2,j_3\}}\frac1{|\zeta|^{2 - \alpha}}\diff \zeta\\
\lesssim~&2^{(3 - \alpha) \min\{j_1,j_2,j_3\}} + 2^{\min\{j_1,j_2,j_3\} + (2 - \alpha) \med\{j_1,j_2,j_3\}}\\
\lesssim~& 2^{\min\{j_1,j_2,j_3\} + (2 - \alpha) \med\{j_1,j_2,j_3\}},
\end{align*}
which proves \eqref{est_symb1'}.

As for \eqref{est_symb2}, we define
\[
\tilde{\tilde\psi}_k(\eta) =\sum\limits_{j=k-3}^{k+3}\psi_j(\eta).
\]
It then follows from the support of $\psi_k$ and the fact that the $\psi_k$ form a partition of unity that
\begin{align*}
 &\frac{\eta_1 \cdot \Tb_1'(\eta_1,\eta_2,\eta_3)}{|\eta_1|^{\alpha + 1}}\tilde\psi_{j_1}(\eta_1)\tilde\psi_{j_2}(\eta_2)\tilde\psi_{j_3}(\eta_3)
 \\
 &\qquad = [\Tb_1'(\eta_1,\eta_2,\eta_3)\tilde\psi_{j_1}(\eta_1)\tilde\psi_{j_2}(\eta_2)\tilde\psi_{j_3}(\eta_3)]\cdot\left[ \frac{\eta_1}{|\eta_1|^{\alpha + 1}}\tilde{\tilde\psi}_{j_1}(\eta_1)\tilde{\tilde\psi}_{j_2}(\eta_2)\tilde{\tilde\psi}_{j_3}(\eta_3)\right].
\end{align*}
By Lemma \ref{multilinear}, we have
\begin{align}\label{lm81eq1}
\begin{split}
& \left\|\frac{\eta_1 \cdot \Tb_1'(\eta_1,\eta_2,\eta_3)}{|\eta_1|^{\alpha + 1}}\tilde\psi_{j_1}(\eta_1)\tilde\psi_{j_2}(\eta_2)\tilde\psi_{j_3}(\eta_3)\right\|_{S^\infty}\\
\lesssim~ & \left\|\Tb_1'(\eta_1,\eta_2,\eta_3)\tilde\psi_{j_1}(\eta_1)\tilde\psi_{j_2}(\eta_2)\tilde\psi_{j_3}(\eta_3)\right\|_{S^\infty}\bigg\| \frac{\eta_1 \tilde{\tilde\psi}_{j_1}(\eta_1)\tilde{\tilde\psi}_{j_2}(\eta_2) \tilde{\tilde\psi}_{j_3}(\eta_3)}{|\eta_1|^{\alpha + 1}}\bigg\|_{S^\infty}.
\end{split}
\end{align}
In view of \eqref{est_symb1'}, we only need to estimate the second term. To this end, we have
\begin{align*}
&\left\|\frac{\eta_1}{|\eta_1|^{\alpha + 1}}\tilde{\tilde\psi}_{j_1}(\eta_1)\tilde{\tilde\psi}_{j_2}(\eta_2)\tilde{\tilde\psi}_{j_3}(\eta_3)\right\|_{S^\infty}=\left\|\int_{\R} \frac{\eta_1 \cdot \tilde{\tilde\psi}_{j_1}(\eta_1)}{|\eta_1|^{\alpha + 1}} e^{i\eta_1 y_1}\diff \eta_1 \F^{-1}[\tilde{\tilde\psi}_{j_2}](y_2) \F^{-1}[\tilde{\tilde\psi}_{j_3}](y_3) \right\|_{L^1}\lesssim 2^{- \alpha j_1}.
\end{align*}
Using \eqref{lm81eq1}, we then get \eqref{est_symb2}. The proof of \eqref{est_symb3} is similar, and we omit it.
\end{proof}

Applying the above lemmas to \eqref{kappaest}, we obtain that
\begin{align*}
\|W_1\|_{L^\infty_\xi}\lesssim (t+1)^{-1} \|\px\vp_{\med\{j_1, j_2, j_3\}}\|_{L^\infty}\|\vp_{\min\{j_1, j_2, j_3\}}\|_{L^2}\|\vp_{\max\{j_1, j_2, j_3\}}\|_{L^2}.
\end{align*}
Using \eqref{def-h} and  Lemma \ref{disp}, we have
\begin{multline*}
\|\px\vp_{\med\{j_1, j_2, j_3\}}\|_{L^\infty}\lesssim
(t+1)^{-1/2}\||\xi|^{1 + \alpha / 2}\hat{h}_{\med\{j_1, j_2, j_3\}}\|_{L^\infty_\xi}\\
+(t+1)^{-3/4}\left[\||\px|^{3 \alpha / 4}P_{\med\{j_1, j_2, j_3\}}(x\px h)\|_{L^2}+\||\px|^{3 \alpha / 4} h_{\med\{j_1, j_2, j_3\}}\|_{L^2}  \right].
\end{multline*}
Therefore,
\begin{align*}
\|W_1\|_{L^\infty_\xi}&\lesssim (t+1)^{-1.5}\Bigl[\mathbf1_{\med\{j_1, j_2, j_3\}\leq 0}2^{(\alpha / 2) \med\{j_1, j_2, j_3\}}\||\xi|\hat{h}_{\med\{j_1, j_2, j_3\}}\|_{L^\infty_\xi} \\
&\quad+\mathbf1_{\med\{j_1, j_2, j_3\}> 0}2^{(- r - 2 + \alpha / 2)\med\{j_1, j_2, j_3\}}\||\xi|^{r+3}\hat{h}_{\med\{j_1, j_2, j_3\}}\|_{L^\infty_\xi}\Bigr]
\\
&\hspace{2.5in}\cdot\|\vp_{\min\{j_1, j_2, j_3\}}\|_{L^2}\|\vp_{\max\{j_1, j_2, j_3\}}\|_{L^2}\\
&\quad+(t+1)^{-1.75}\Bigl[\||\px|^{3 \alpha / 4}P_{\med\{j_1, j_2, j_3\}}(x\px h)\|_{L^2}+\||\px|^{3 \alpha / 4} h_{\med\{j_1, j_2, j_3\}}\|_{L^2}  \Bigr]\\*
& \hspace{2.5in} \cdot \|\vp_{\min\{j_1, j_2, j_3\}}\|_{L^2}\|\vp_{\max\{j_1, j_2, j_3\}}\|_{L^2}\\
&\lesssim (t+1)^{-1.5}\Bigl[\mathbf1_{\med\{j_1, j_2, j_3\} \leq 0}2^{(\alpha / 2) \med\{j_1, j_2, j_3\}}+\mathbf1_{\med\{j_1, j_2, j_3\} > 0}2^{(- r - 2 + \alpha / 2) \med\{j_1, j_2, j_3\}} \Bigr]\\
& \hspace{2.5in} \cdot \|h_{\med\{j_1, j_2, j_3\}}\|_Z\|\vp_{\min\{j_1, j_2, j_3\}}\|_{L^2}\|\vp_{\max\{j_1, j_2, j_3\}}\|_{L^2}\\
&\quad+(t+1)^{-1.75}\left[\||\px|^{3 \alpha / 4}P_{\med\{j_1, j_2, j_3\}}(x\px h)\|_{L^2}+\||\px|^{3 \alpha / 4} h_{\med\{j_1, j_2, j_3\}}\|_{L^2}  \right]\\
& \hspace{2.5in} \cdot \|\vp_{\min\{j_1, j_2, j_3\}}\|_{L^2}\|\vp_{\max\{j_1, j_2, j_3\}}\|_{L^2}.
\end{align*}

{\bf Estimate of $W_2$ and $W_3$.}
We rewrite $W_2$ as
\begin{align*}
&\iint_{\R^2} \left[ \frac{\Tb_1'(\eta_1, \eta_2, \xi - \eta_1 - \eta_2)}{it\partial_{\eta_1}\Phi(\xi,\eta_1,\eta_2) (\xi - \eta_1 - \eta_2)}\right]
\\
&\qquad\qquad e^{i A t \Phi(\xi,\eta_1,\eta_2)} \hat h_{j_1}(\eta_1)\hat h_{j_2}(\eta_2) \left[(\xi - \eta_1 - \eta_2)\partial_{\eta_1}\hat h_{j_3}(\xi-\eta_1-\eta_2)\right] \diff{\eta_1} \diff{\eta_2}.
\end{align*}
Using \eqref{denom1} and making the change of variable $\eta_3 = \xi - \eta_1-\eta_2$, we see that,
in view of the multilinear estimate Lemma \ref{multilinear}, we need to estimate the $S^\infty$-norm of the symbol
\[
\frac{\Tb_1'(\eta_1,\eta_2,\eta_3)}{(|\eta_1|^{1 - \alpha} - |\eta_3|^{1 - \alpha})\eta_3}\psi_{j_1}(\eta_1)\psi_{j_2}(\eta_2)\tilde\psi_{j_3}(\eta_3).
\]
Using Lemma \ref{log-log} and Lemma \ref{estT1}, as in the estimate of $W_1$, we obtain that
\begin{align*}
\|W_2\|_{L^\infty_\xi}\lesssim (t+1)^{-1}\|\px\vp_{\min\{j_1,j_2\}}\|_{L^\infty}\|\xi\partial_\xi\hat{h}_{j_3}\|_{L^2_\xi}\||\px|^{\alpha - 1} \vp_{\max\{j_1,j_2\}}\|_{L^2}.
\end{align*}
From \eqref{def-h} and Lemma \ref{disp}, we then have
\begin{align*}
\|W_2\|_{L^\infty_\xi}
& \lesssim (t+1)^{-1.5}\Big(\mathbf1_{\min\{j_1,j_2\}\leq 0}2^{(\alpha / 2) \min\{j_1,j_2\}}+\mathbf1_{\min\{j_1,j_2\}> 0}2^{(-r - 2 + \alpha / 2)\min\{j_1,j_2\}} \Big)\\
& \hspace{2.5in} \cdot\|h_{\min\{j_1,j_2\}}\|_Z  \|\xi\partial_\xi\hat{h}_{j_3}\|_{L^2_\xi}\||\px|^{\alpha - 1} \vp_{\max\{j_1,j_2\}}\|_{L^2}\\
& \qquad +(t+1)^{-1.75}\left[\||\px|^{3 \alpha / 4}P_{\min\{j_1,j_2\}}(x\px h)\|_{L^2}+\||\px|^{3 \alpha / 4} h_{\min\{j_1,j_2\}}\|_{L^2}  \right]\\
& \hspace{2.5in}\cdot \|\xi\partial_\xi\hat{h}_{j_3}\|_{L^2_\xi}\||\px|^{\alpha - 1} \vp_{\max\{j_1,j_2\}}\|_{L^2}.
\end{align*}
A similar argument also gives that
\begin{align*}
\|W_3\|_{L^\infty_\xi}
&\lesssim (t+1)^{-1.5}\Big(\mathbf1_{\min\{j_2,j_3\}\leq 0}2^{(\alpha / 2) \min\{j_2,j_3\}}+\mathbf1_{\min\{j_2,j_3\}> 0}2^{(-r - 2 + \alpha / 2)\min\{j_2,j_3\}} \Big)\\
& \hspace{2.5in}\cdot \|{h}_{\min\{j_2,j_3\}}\|_{Z}  \|\xi\partial_\xi\hat{h}_{j_1}\|_{L^2_\xi}\||\px|^{\alpha - 1} \vp_{\max\{j_2,j_3\}}\|_{L^2}\\
& \qquad +(t+1)^{-1.75}\left[\||\px|^{3 \alpha / 4}P_{\min\{j_2,j_3\}}(x\px h)\|_{L^2}+\||\px|^{3 \alpha / 4} h_{\min\{j_2,j_3\}}\|_{L^2}  \right]\\
& \hspace{2.5in}\cdot \|\xi\partial_\xi\hat{h}_{j_1}\|_{L^2_\xi}\||\px|^{\alpha - 1} \vp_{\max\{j_2,j_3\}}\|_{L^2}.
\end{align*}

In conclusion, introducing the cutoff function $\cutoffxi$, we have for nonresonant frequencies that
\begin{align*}
&\bigg\|\xi(|\xi|+|\xi|^{r+3})\cutoffxi(\xi,t)\iint_{\R^2} \Tb_1'(\eta_1,\eta_2,\xi-\eta_1-\eta_2)e^{i A t \Phi(\xi,\eta_1,\eta_2)}\hat h_{j_1}(\eta_1)\hat h_{j_2}(\eta_2)\hat h_{j_3}(\xi-\eta_1-\eta_2)\diff\eta_1\diff\eta_2\bigg\|_{L^\infty_\xi}\\
\lesssim~ &(t+1)^{(r+4)p_1}\left(\|W_1\|_{L^\infty_\xi}+\|W_2\|_{L^\infty_\xi}+\|W_3\|_{L^\infty_\xi}\right)\\
\lesssim~ & (t+1)^{-1.5 + (r + 4) p_1}\Big(\mathbf1_{\med\{j_1, j_2, j_3\} \leq 0}2^{(\alpha / 2) \med\{j_1, j_2, j_3\}}+\mathbf1_{\med\{j_1, j_2, j_3\} > 0}2^{(- r - 2 + \alpha / 2) \med\{j_1, j_2, j_3\}} \Big)\\
& \hspace{2.5in} \cdot \|h_{\med\{j_1, j_2, j_3\}}\|_Z\|\vp_{\min\{j_1, j_2, j_3\}}\|_{L^2}\|\vp_{\max\{j_1, j_2, j_3\}}\|_{L^2}\\
&\quad+(t+1)^{-1.75 + (r + 4) p_1}\left[\||\px|^{3 \alpha / 4}P_{\med\{j_1, j_2, j_3\}}(x\px h)\|_{L^2}+\||\px|^{3 \alpha / 4} h_{\med\{j_1, j_2, j_3\}}\|_{L^2}  \right]\\
& \hspace{2.5in} \cdot \|\vp_{\min\{j_1, j_2, j_3\}}\|_{L^2}\|\vp_{\max\{j_1, j_2, j_3\}}\|_{L^2}\\
& \quad + (t+1)^{-1.5 + (r + 4) p_1}\Big(\mathbf1_{\min\{j_1,j_2\}\leq 0}2^{(\alpha / 2) \min\{j_1,j_2\}}+\mathbf1_{\min\{j_1,j_2\}> 0}2^{(- r - 2 + \alpha / 2)\min\{j_1,j_2\}} \Big)\\
& \hspace{2.5in} \cdot\|h_{\min\{j_1,j_2\}}\|_Z  \|\xi\partial_\xi\hat{h}_{j_3}\|_{L^2_\xi}\||\px|^{\alpha - 1} \vp_{\max\{j_1,j_2\}}\|_{L^2}\\
& \qquad +(t+1)^{-1.75 + (r + 4) p_1}\left[\||\px|^{3 \alpha / 4}P_{\min\{j_1,j_2\}}(x\px h)\|_{L^2}+\||\px|^{3 \alpha / 4} h_{\min\{j_1,j_2\}}\|_{L^2}  \right]\\
& \hspace{2.5in}\cdot \|\xi\partial_\xi\hat{h}_{j_3}\|_{L^2_\xi}\||\px|^{\alpha - 1} \vp_{\max\{j_1,j_2\}}\|_{L^2}\\
& \quad +  (t+1)^{-1.5 + (r + 4) p_1}\Big(\mathbf1_{\min\{j_2,j_3\}\leq 0}2^{(\alpha / 2) \min\{j_2,j_3\}}+\mathbf1_{\min\{j_2,j_3\}> 0}2^{(- r - 2 + \alpha / 2)\min\{j_2,j_3\}} \Big)\\
& \hspace{2.5in}\cdot \|{h}_{\min\{j_2,j_3\}}\|_{Z}  \|\xi\partial_\xi\hat{h}_{j_1}\|_{L^2_\xi}\||\px|^{\alpha - 1} \vp_{\max\{j_2,j_3\}}\|_{L^2}\\
& \qquad +(t+1)^{-1.75 + (r + 4) p_1}\left[\||\px|^{3 \alpha / 4}P_{\min\{j_2,j_3\}}(x\px h)\|_{L^2}+\||\px|^{3 \alpha / 4} h_{\min\{j_2,j_3\}}\|_{L^2}  \right]\\
& \hspace{2.5in}\cdot \|\xi\partial_\xi\hat{h}_{j_1}\|_{L^2_\xi}\||\px|^{\alpha - 1} \vp_{\max\{j_2,j_3\}}\|_{L^2}.
\end{align*}
The right-hand-side is summable over $j_1,j_2,j_3$ and the sum is integrable in $t$ on $(0,\infty)$.

\subsection{Near-resonant frequencies}
\label{8.4}

The remaining dyadic blocks to consider are when
\begin{align}
\label{resregion}
\max\{j_1,j_2,j_3\}<10^{-3}\log_2(t+1),\qquad |j_3-j_2|\leq 1,\qquad |j_3-j_1|\leq 1.
\end{align}

To estimate \eqref{cubdyadic} in this region, we consider the following two cases:
\begin{enumerate}[(i)]
\item Frequencies $\eta_1, \eta_2$ and $\xi-\eta_1-\eta_2$ have the same sign.

By the definition of cut-off function $\psi$, we then have
\[
\frac58 2^{j_1} \leq |\eta_1|\leq\frac85 2^{j_1},\quad \frac58 2^{j_2} \leq |\eta_2|\leq\frac85 2^{j_2},\quad \frac58 2^{j_3} \leq |\xi-\eta_1-\eta_2|\leq\frac85 2^{j_3},
\]
and thus,
\[
\frac58 (2^{j_1}+2^{j_2}+ 2^{j_3})\leq |\xi| \leq \frac85 (2^{j_1}+2^{j_2}+ 2^{j_3}).
\]
This corresponds to the region near the space resonance $\eta_1 = \eta_2 = \xi - \eta_1 - \eta_2 = \xi / 3$.

\item Frequencies $\eta_1, \eta_2$ and $\xi-\eta_1-\eta_2$ do not have the same sign.

This corresponds to the region near the space-time resonances $(\eta_1,\eta_2)=(\xi,\xi)$, $(\xi,-\xi)$, or $(-\xi,\xi)$ separately. Since the symbol $\Tb'_1(\eta_1, \eta_2, \eta_3)$ is symmetric in $\eta_1$, $\eta_2$, and $\eta_3$, it suffices to consider \eqref{cubdyadic} in the region near $(\xi, \xi)$.
\end{enumerate}

To estimate the integral \eqref{cubdyadic} under \eqref{resregion}, we make a further dyadic decomposition. Denoting $(\xi,\xi, -\xi)$ or $(\xi/3,\xi/3, \xi/3)$ by $(\xi_1,\xi_2, \xi_3)$, we decompose \eqref{resregion} using the cut-off functions $\psi_{k_1}$ and $\psi_{k_2}$. Since
\[
\sum\limits_{(k_1,k_2)\in\Z^2}\psi_{k_1}(\eta_1-\xi_1)\psi_{k_2}(\eta_2-\xi_2)=1,
\]
we can write the integral \eqref{cubdyadic} as
\begin{multline*}
\iint_{\R^2}   \Tb_1'(\eta_1, \eta_2, \xi - \eta_1 - \eta_2) e^{i A t \Phi(\xi,\eta_1,\eta_2)} \hat h_{j_1}(\eta_1)\hat h_{j_2}(\eta_2) \hat h_{j_3}(\xi-\eta_1-\eta_2) \\
\cdot \bigg[\sum\limits_{k_1=-\infty}^{\max\{j_1,j_3\}+1}\psi_{k_1}(\eta_1-\xi_1) \bigg] \cdot \bigg[\sum\limits_{k_2=-\infty}^{\max\{j_2,j_3\}+1}\psi_{k_2}(\eta_2-\xi_2)\bigg] \diff{\eta_1} \diff{\eta_2},
\end{multline*}
where
\[
\bigg[\sum\limits_{k_1=-\infty}^{\max\{j_1,j_3\}+1}\psi_{k_1}(\eta_1-\xi_1) \bigg] \cdot \bigg[\sum\limits_{k_2=-\infty}^{\max\{j_2,j_3\}+1}\psi_{k_2}(\eta_2-\xi_2)\bigg]=1
\]
on the support of $\hat h_{j_1}(\eta_1)\hat h_{j_2}(\eta_2) \hat h_{j_3}(\xi-\eta_1-\eta_2)$.

In this subsection, we restrict our attention to the terms satisfying
\[
k_1\geq \log_2[\varrho(t)] \qquad \text{or} \qquad k_2\geq \log_2[\varrho(t)],
\]
 where
 \begin{equation}
 \varrho(t)=(t+1)^{-0.49}.
 \label{def_rho}
 \end{equation}
The case of $k_1< \log_2[\varrho(t)]$ and $k_2< \log_2[\varrho(t)]$, which includes the resonant frequencies, will be discussed in Section \ref{8.5}.

Since the integrals are symmetric in $\eta_1$ and $\eta_2$, we can assume without loss of generality that $j_1\geq k_1\geq k_2\ge \log_2[\varrho(t)]$.
Using integration by parts, we write each integral as
\begin{equation}\label{resInt1}
\begin{aligned}
& \iint_{\R^2}   \frac{\Tb_1'(\eta_1, \eta_2, \xi - \eta_1 - \eta_2)}{2 (2 - \alpha) i A t (|\eta_1|^{1 - \alpha} - |\xi-\eta_1-\eta_2|^{1 - \alpha})}\partial_{\eta_1} e^{i A t \Phi(\xi,\eta_1,\eta_2)} \hat h_{j_1}(\eta_1)\hat h_{j_2}(\eta_2) \hat h_{j_3}(\xi-\eta_1-\eta_2)\\
&\qquad \cdot \bigg[\psi_{k_1}(\eta_1-\xi_1)  \cdot \psi_{k_2}(\eta_2-\xi_2)\bigg] \diff{\eta_1} \diff{\eta_2},\\
  =~&- \frac{V_1+V_2+V_3+V_4}{2i (2 - \alpha) At},
\end{aligned}
\end{equation}
where
\begin{align*}
V_1(\xi, t)=&\iint_{\R^2}   \partial_{\eta_1}\left[\frac{\Tb_1'(\eta_1, \eta_2, \xi - \eta_1 - \eta_2)}{|\eta_1|^{1 - \alpha} - |\xi-\eta_1-\eta_2|^{1 - \alpha}}\right] e^{i A t \Phi(\xi,\eta_1,\eta_2)} \hat h_{j_1}(\eta_1)\hat h_{j_2}(\eta_2) \hat h_{j_3}(\xi-\eta_1-\eta_2)\\
& \hspace{2in} \cdot \psi_{k_1}(\eta_1-\xi_1)  \cdot \psi_{k_2}(\eta_2-\xi_2)  \diff{\eta_1} \diff{\eta_2},\\[1ex]
   V_2(\xi, t)=& \iint_{\R^2}   \left[\frac{\Tb_1'(\eta_1, \eta_2, \xi - \eta_1 - \eta_2)}{|\eta_1|^{1 - \alpha} - |\xi-\eta_1-\eta_2|^{1 - \alpha}}\right] e^{i A t \Phi(\xi,\eta_1,\eta_2)} \partial_{\eta_1}\hat h_{j_1}(\eta_1)\hat h_{j_2}(\eta_2) \hat h_{j_3}(\xi-\eta_1-\eta_2)\\
   & \hspace{2in} \cdot \psi_{k_1}(\eta_1-\xi_1)  \cdot \psi_{k_2}(\eta_2-\xi_2)  \diff{\eta_1} \diff{\eta_2},\\[1ex]
  V_3(\xi, t)=& \iint_{\R^2}   \left[\frac{\Tb_1'(\eta_1, \eta_2, \xi - \eta_1 - \eta_2)}{|\eta_1|^{1 - \alpha} - |\xi-\eta_1-\eta_2|^{1 - \alpha}}\right] e^{i A t \Phi(\xi,\eta_1,\eta_2)} \hat h_{j_1}(\eta_1)\hat h_{j_2}(\eta_2) \partial_{\eta_1}\hat h_{j_3}(\xi-\eta_1-\eta_2)\\
  & \hspace{2in} \cdot \psi_{k_1}(\eta_1-\xi_1)  \cdot \psi_{k_2}(\eta_2-\xi_2)  \diff{\eta_1} \diff{\eta_2},\\[1ex]
  V_4(\xi, t)=& \iint_{\R^2}   \left[\frac{\Tb_1'(\eta_1, \eta_2, \xi - \eta_1 - \eta_2)}{|\eta_1|^{1 - \alpha} - |\xi-\eta_1-\eta_2|^{1 - \alpha}}\right] e^{i A t \Phi(\xi,\eta_1,\eta_2)} \hat h_{j_1}(\eta_1)\hat h_{j_2}(\eta_2) \hat h_{j_3}(\xi-\eta_1-\eta_2)\\
  & \hspace{2in} \cdot \partial_{\eta_1}\psi_{k_1}(\eta_1-\xi_1)  \cdot \psi_{k_2}(\eta_2-\xi_2)  \diff{\eta_1} \diff{\eta_2}.
\end{align*}

{\bf Estimate of $V_1$}. We first denote the symbol of $V_1$ by
\begin{align*}
m(\eta_1,\eta_2,\xi)&=\partial_{\eta_1}\left[\frac{\Tb_1'(\eta_1, \eta_2, \xi - \eta_1 - \eta_2)}{|\eta_1|^{1 - \alpha} - |\xi-\eta_1-\eta_2|^{1 - \alpha}}\right] \\
&= \frac{(3 - \alpha)}{|\eta_1|^{1 - \alpha} - |\xi-\eta_1-\eta_2|^{1 - \alpha}}\cdot\Big[|\eta_1|^{1-\alpha}\eta_1-|\xi-\eta_1-\eta_2|^{1-\alpha}(\xi-\eta_1-\eta_2)\\*
&\qquad -|\eta_1+\eta_2|^{1-\alpha}(\eta_1+\eta_2)+ |\xi-\eta_1|^{1-\alpha}(\xi-\eta_1)\Big]\\
&\qquad -(1-\alpha)\frac{|\eta_1|^{-1-\alpha}\eta_1+|\xi-\eta_1-\eta_2|^{-1 - \alpha}(\xi-\eta_1-\eta_2)}{(|\eta_1|^{1 - \alpha} - |\xi-\eta_1-\eta_2|^{1 - \alpha})^2}\\*
&\qquad\cdot\Big[|\eta_1|^{3-\alpha}+|\eta_2|^{3-\alpha}+|\xi-\eta_1-\eta_2|^{3-\alpha}+|\xi|^{3-\alpha}-|\eta_1+\eta_2|^{3-\alpha}-|\xi-\eta_2|^{3-\alpha}-|\xi-\eta_1|^{3-\alpha}\Big].
\end{align*}

Writing $\upsilon_i=\eta_i-\xi_i$, $i=1,2$, we need to estimate
\begin{align*}
\bigg\|\iint_{\R^2} & m(\upsilon_1+\xi_1,\upsilon_2+\xi_2,\xi) e^{i A t \Phi(\xi,\upsilon_1+\xi_1,\upsilon_2+\xi_2)} \hat h_{j_1}(\upsilon_1+\xi_1)\hat h_{j_2}(\upsilon_2+\xi_2) \hat h_{j_3}(\xi_3-\upsilon_1-\upsilon_2) \\*
& \hspace{3.5in} \cdot \psi_{k_1}(\upsilon_1)  \psi_{k_2}(\upsilon_2) \diff{\upsilon_1} \diff{\upsilon_2}\bigg\|_{L^\infty_\xi}.
\end{align*}

Using Lemma \ref{multilinear}, we have
\begin{align*}
\|V_1\|_{L^\infty_\xi}
\lesssim~&\|\chi_{j_1,j_3}^{k_1,k_2}(\upsilon_1, \upsilon_2, \xi) m(\upsilon_1+\xi_1,\upsilon_2+\xi_2,\xi)\|_{S^\infty_{\upsilon_1,\upsilon_2}L^\infty_\xi} \\
&\cdot\| \hat \vp_{j_1}(\upsilon_1+\xi_1)\psi_{k_1}(\upsilon_1)\|_{L^2_{\upsilon_1}L^\infty_\xi} \left\|\hat \vp_{j_2}(\upsilon_2+\xi_2)\psi_{k_2}(\upsilon_2)\right\|_{L^2_{\upsilon_2}L^\infty_\xi} \|\vp_{j_3}\|_{L^\infty}
\end{align*}
where
\[
\chi_{j_1,j_3}^{k_1,k_2}(\upsilon_1,\upsilon_2,\xi)=\tilde\psi_{k_1}(\upsilon_1)\tilde\psi_{k_2}(\upsilon_2)\tilde\psi_{j_1}(\upsilon_1+\xi_1) \tilde\psi_{j_2}(\upsilon_2+\xi_2) \tilde\psi_{j_3}(\xi_3-\upsilon_1-\upsilon_2)\chi(\xi).
\]

\noindent (i) If $(\xi_1,\xi_2,\xi_3)=(\xi/3, \xi/3, \xi/3)$, then setting $w_1=\upsilon_1$, $w_2=-2\upsilon_1-\upsilon_2$ and using \eqref{SymEst} together with the rotational and scale invariance of the $S^\infty$-norm, we get that
\begin{align*}
&\|\chi_{j_1,j_3}^{k_1,k_2}(\upsilon_1,\upsilon_2,\xi) m(\upsilon_1+\xi_1,\upsilon_2+\xi_2,\xi)\|_{S^\infty_{\upsilon_1,\upsilon_2}L^\infty_\xi}\\
 =~&\|\chi_{j_1,j_3}^{k_1,k_2}(w_1, -2w_1-w_2,\xi) m(w_1+\xi_1,-2w_1-w_2+\xi_2,\xi)\|_{S^\infty_{w_1,w_2}L^\infty_\xi}\\
 \lesssim~&\|\chi_{j_1,j_3}^{k_1,k_2}(w_1, -2w_1-w_2,\xi) m(w_1+\xi_1,-2w_1-w_2+\xi_2,\xi)\|_{L^1_{w_1w_2}L^\infty_\xi}^{1/4}\\
 &\cdot\|\partial_{w_1}^2[\chi_{j_1,j_3}^{k_1,k_2}(w_1, -2w_1-w_2,\xi) m(w_1+\xi_1,-2w_1-w_2+\xi_2,\xi)]\|_{L^1_{w_1w_2}L^\infty_\xi}^{1/2}\\
 &\cdot\|\partial_{w_1}^2\partial_{w_2}^2[\chi_{j_1,j_3}^{k_1,k_2}(w_1, -2w_1-w_2,\xi) m(w_1+\xi_1,-2w_1-w_2+\xi_2,\xi)]\|_{L^1_{w_1w_2}L^\infty_\xi}^{1/4}\\
 \lesssim~&[2^{\alpha j_1+k_1}2^{(2-\alpha)j_1}]^{1/4}[2^{(\alpha-2) j_1 +k_1}2^{(2-\alpha) j_1}]^{1/2}[2^{(\alpha-2) j_1 - k_1}2^{(2-\alpha) j_1}]^{1/4}\\
 =~&2^{(j_1+k_1)/2},
\end{align*}
where we have used the estimates
\begin{align*}
\left|\frac{\chi_{j_1,j_3}^{k_1,k_2}}{|w_1+\frac\xi3|^{1 - \alpha} - |\frac\xi3+w_1+w_2|^{1 - \alpha}}\right| &\lesssim 2^{\alpha j_1-k_1},\\
\left|\chi_{j_1,j_3}^{k_1,k_2} \partial_{w_1}^2\frac{1}{|w_1+\frac\xi3|^{1 - \alpha} - |\frac\xi3+w_1+w_2|^{1 - \alpha}}\right|& \lesssim 2^{3(\alpha j_1-k_1)}2^{2[(-\alpha-1) j_1 + k_1]} =2^{(\alpha-2)j_1-k_1},\\
\left|\chi_{j_1,j_3}^{k_1,k_2}  \partial_{w_1}^2\partial_{w_2}^2\frac{1}{|w_1+\frac\xi3|^{1 - \alpha} - |\frac\xi3+w_1+w_2|^{1 - \alpha}}\right| & \lesssim 2^{5(\alpha j_1-k_1)}2^{-2j_1\alpha} 2^{2[(-\alpha-1)j_1+k_1]} =2^{(\alpha-2)j_1-3k_1}.
\end{align*}
Therefore, using \eqref{psi-L2}, \eqref{LocDis}, and \eqref{resregion}, we get that
\begin{align}
\label{est_V1}
\begin{split}
\|V_1\|_{L^\infty_\xi}&\lesssim 2^{ (j_1+k_1)/2 - j_3}(t+1)^{-1}\| \hat \vp_{j_1}(\upsilon_1+\xi_1)\psi_{k_1}(\upsilon_1)\|_{L^2_{\upsilon_1}L^\infty_\xi} \left\|\hat \vp_{j_2}(\upsilon_2+\xi_2)\psi_{k_2}(\upsilon_2)\right\|_{L^2_{\upsilon_2}L^\infty_\xi} \|\px \vp_{j_3}\|_{L^\infty}\\
&\lesssim  2^{- 0.5 j_1+k_1+0.5k_2}\|\psi_{k_1} \hat\vp_{j_1}\|_{L^\infty_{\xi}}\|\psi_{k_2} \hat\vp_{j_2}\|_{L^\infty_\xi} \Big\{(t+1)^{-1.5}\||\xi|^{1 + \alpha / 2}\hat{h}_{j_3}^{k_1,k_2}\|_{L^\infty_\xi} \\
& \hspace{1in} +(t+1)^{-1.75}\big[\||\px|^{3 \alpha / 4} P_{j_3}^{k_1,k_2}(x\px h)\|_{L^2}+\||\px|^{3 \alpha / 4} h_{j_3}^{k_1,k_2}\|_{L^2}  \big] \Big\}.
\end{split}
\end{align}

\noindent (ii) If $(\xi_1,\xi_2,\xi_3)=(\xi,\xi,-\xi)$, we use \eqref{SymEst} to obtain
\begin{align*}
&\|\chi_{j_1,j_3}^{k_1,k_2}(\upsilon_1,\upsilon_2,\xi) m(\upsilon_1+\xi_1,\upsilon_2+\xi_2,\xi)\|_{S^\infty_{\upsilon_1,\upsilon_2}L^\infty_\xi}\\
 \lesssim~& \|\chi_{j_1,j_3}^{k_1,k_2}(\upsilon_1,\upsilon_2,\xi) m(\upsilon_1+\xi_1,\upsilon_2+\xi_2,\xi)\|_{L^1_{\upsilon_1\upsilon_2}}^{1/4}\|\partial_{\upsilon_1}^2[\chi_{j_1,j_3}^{k_1,k_2}(\upsilon_1,\upsilon_2,\xi) m(\upsilon_1+\xi_1,\upsilon_2+\xi_2,\xi)]\|_{L^1_{\upsilon_1\upsilon_2}}^{1/2}\\
 &\cdot\|\partial_{\upsilon_1}^2\partial_{\upsilon_2}^2[\chi_{j_1,j_3}^{k_1,k_2}(\upsilon_1,\upsilon_2,\xi) m(\upsilon_1+\xi_1,\upsilon_2+\xi_2,\xi)]\|_{L^1_{\upsilon_1\upsilon_2}}^{1/4}
\\
 \lesssim~ &[2^{\alpha j_1+k_1}2^{(2-\alpha)j_1}]^{1/4} [2^{(\alpha-2) j_1+k_1}2^{(2-\alpha)j_1}]^{1/2} [2^{(\alpha-2) j_1-2k_2+k_1}2^{(2-\alpha)j_1}]^{1/4} \\
 =~& 2^{ \frac12j_1+k_1-\frac12k_2},
\end{align*}
where we have used the estimates
\begin{align*}
\left|\frac{\chi_{j_1,j_3}^{k_1,k_2}}{|\upsilon_1+\xi|^{1 - \alpha} - |-\xi-\upsilon_1-\upsilon_2|^{1 - \alpha}}\right| &\lesssim 2^{\alpha j_1-k_2},\\
\left|\chi_{j_1,j_3}^{k_1,k_2}\partial_{\upsilon_1}^2\frac{1}{|\upsilon_1+\xi|^{1 - \alpha} - |-\xi-\upsilon_1-\upsilon_2|^{1 - \alpha}}\right| &\lesssim 2^{3(\alpha j_1-k_2)}2^{2[(-\alpha-1) j_1+k_2]}=2^{(\alpha-2) j_1-k_2},\\
 \left|\chi_{j_1,j_3}^{k_1,k_2}\partial_{\upsilon_1}^2\partial_{\upsilon_2}^2\frac{1}{|\upsilon_1+\xi|^{1 - \alpha} - |-\xi-\upsilon_1-\upsilon_2|^{1 - \alpha}}\right| &\lesssim 2^{5(\alpha j_1-k_2)}2^{-2\alpha j_1}2^{2[(-\alpha-1)j_1+k_2]}=2^{(\alpha-2) j_1-3k_2}.
\end{align*}

Therefore, using \eqref{psi-L2}, \eqref{LocDis}, and \eqref{resregion}
\begin{align}
\begin{split}
\|V_1\|_{L^\infty_\xi}
&\lesssim 2^{0.5 j_1 - j_3 + k_1 - 0.5k_2}(t+1)^{-1}\| \hat \vp_{j_1}(\upsilon_1+\xi_1)\psi_{k_1}(\upsilon_1)\|_{L^2_{\upsilon_1}L^\infty_\xi} \left\|\hat \vp_{j_2}(\upsilon_2+\xi_2)\psi_{k_2}(\upsilon_2)\right\|_{L^2_{\upsilon_2}L^\infty_\xi} \|\px \vp_{j_3}\|_{L^\infty}\\
&\lesssim  2^{- 0.5 j_1 + 1.5k_1}\|\psi_{k_1} \hat\vp_{j_1}\|_{L^\infty_{\xi}} \|\psi_{k_2} \hat\vp_{j_2}\|_{L^\infty_\xi} \Big\{(t+1)^{-1.5}\||\xi|^{1 + \alpha / 2}\hat{h}_{j_3}\|_{L^\infty_\xi} \\
& \hspace{1in} +(t+1)^{-1.75}\big[\||\px|^{3 \alpha / 4} P_{j_3}(x\px h)\|_{L^2}+\||\px|^{3 \alpha / 4} h_{j_3}\|_{L^2}  \big] \Big\}.
\end{split}
\end{align}

{\bf Estimates of $V_2$--$V_4$.}
The estimates for $V_2$--$V_4$ are similar to $V_1$, and we omit the details here. The resulting estimates are as follows.

\noindent (i) If $(\xi_1,\xi_2, \xi_3)=(\xi/3,\xi/3, \xi/3)$, the symbol can be estimated by
\begin{align*}
&\left\|\frac{\Tb'_1(\xi_1+\upsilon_1, \xi_2+\upsilon_2, \xi_3-\upsilon_1-\upsilon_2)}{|\xi_1+\upsilon_1|^{1-\alpha}-|\xi_3-\upsilon_1-\upsilon_2|^{1-\alpha}}
\right\|_{S^\infty_{\upsilon_1\upsilon_2}L^\infty_\xi}
 \lesssim 2^{1.5 j_1+0.5k_1}.
\end{align*}

\noindent (ii) If $(\xi_1,\xi_2, \xi_3)=(\xi,\xi, -\xi)$, the symbol can be estimated by
\begin{align*}
&\left\|\chi_{j_1, j_3}^{k_1, k_2}(\upsilon_1, \upsilon_2, \xi) \frac{\Tb'_1(\xi_1+\upsilon_1, \xi_2+\upsilon_2, \xi_3-\upsilon_1-\upsilon_2)}{|\xi_1+\upsilon_1|^{1-\alpha}-|\xi_3-\upsilon_1-\upsilon_2|^{1-\alpha}}\right\|_{S^\infty_{\upsilon_1\upsilon_2}L^\infty_\xi}\\
\lesssim~&[2^{\alpha j_1+k_1}2^{(3-\alpha)j_1}]^{1/4} [2^{(\alpha-2) j_1+k_1}2^{(3-\alpha)j_1}]^{1/2} [2^{(\alpha-2) j_1-2k_2+k_1}2^{(3-\alpha)j_1}]^{1/4} \\
 =~& 2^{1.5 j_1+k_1-0.5k_2}.
\end{align*}

In either case, we have the following estimates
\begin{align}
\begin{split}
 \|V_2\|_{L^\infty_\xi}
&\lesssim 2^{-0.5 j_1+k_1}\|\eta_1 \partial_{\eta_1}\hat\vp_{j_1}(\eta_1)\|_{L^2_{\eta_1}} \|\psi_{k_2} \hat\vp_{j_2}\|_{L^\infty_\xi}\\*
& \quad \cdot  \Big\{(t+1)^{-1.5}\||\xi|^{1 + \alpha / 2}\hat{h}_{j_3}\|_{L^\infty_\xi} +(t+1)^{-1.75}\big[\||\px|^{3 \alpha / 4} P_{j_3}(x\px h)\|_{L^2}+\||\px|^{3 \alpha / 4} h_{j_3}\|_{L^2}  \big] \Big\},
\end{split}\\
\begin{split}
\|V_3\|_{L^\infty_\xi}
&\lesssim 2^{-0.5 j_1+k_1}\|\eta_3 \partial_{\eta_3}\hat\vp_{j_3}(\eta_3)\|_{L^2_{\eta_3}}\|\psi_{k_2} \hat\vp_{j_2}\|_{L^\infty_\xi}\\*
& \quad \cdot \Big\{(t+1)^{-1.5}\||\xi|^{1 + \alpha / 2}\hat{h}_{j_1}\|_{L^\infty_\xi} +(t+1)^{-1.75}\big[\||\px|^{3 \alpha / 4} P_{j_1}(x\px h)\|_{L^2}+\||\px|^{3 \alpha / 4} h_{j_1}\|_{L^2}  \big] \Big\},
\end{split}\\
\begin{split}
\|V_4\|_{L^\infty_\xi}
&\lesssim  2^{0.5 j_1+0.5k_1} \|\psi_{k_1} \hat\vp_{j_1}\|_{L^\infty_{\xi}}\|\psi_{k_2} \hat\vp_{j_2}\|_{L^\infty_\xi}\\
& \quad \cdot \Big\{(t+1)^{-1.5}\||\xi|^{1 + \alpha / 2}\hat{h}_{j_3}\|_{L^\infty_\xi} +(t+1)^{-1.75}\big[\||\px|^{3 \alpha / 4} P_{j_3}(x\px h)\|_{L^2}+\||\px|^{3 \alpha / 4} h_{j_3}\|_{L^2}  \big] \Big\}.\label{est_V4}
\end{split}
\end{align}

Finally, we take the summation over $\log_2[\varrho(t)] \leq k_1, k_2 \leq \max\{j_1, j_3\} + 1$, and combine the estimates \eqref{est_V1}--\eqref{est_V4} to get
\begin{align*}
& \Bigg\|\xi (|\xi| + |\xi|^{r + 3}) \cutoffxi(\xi, t) \iint_{\R^2} \Tb_1'(\eta_1, \eta_2, \xi - \eta_1 - \eta_2) e^{i A t \Phi(\xi,\eta_1,\eta_2)} \hat h_{j_1}(\eta_1)\hat h_{j_2}(\eta_2) \hat h_{j_3}(\xi-\eta_1-\eta_2) \\
& \hspace{2in} \cdot \bigg[\sum\limits_{k_1= \log_2[\varrho(t)]}^{\max\{j_1,j_3\}+1}\psi_{k_1}(\eta_1-\xi_1) \bigg] \cdot \bigg[\sum\limits_{k_2=\log_2[\varrho(t)]}^{\max\{j_2,j_3\}+1}\psi_{k_2}(\eta_2-\xi_2)\bigg] \diff{\eta_1} \diff{\eta_2}\Bigg\|_{L^\infty_\xi}\\
& \lesssim \left[\max\{j_1, j_3\} - \log_2[\varrho(t)]\right]^2 (t + 1)^{(r + 3) p_1}\\
& \qquad \cdot \left[\||\xi| \psi_{k_1} \hat\vp_{j_1}\|_{L^\infty_{\xi}}\||\xi| \psi_{k_2} \hat\vp_{j_2}\|_{L^\infty_\xi} + \|\eta_1 \partial_{\eta_1} \hat{\vp}_{j_1}(\eta_1)\|_{L^2_{\eta_1}} \||\xi| \psi_{k_2} \hat\vp_{j_2}\|_{L^\infty_\xi} + \||\xi| \psi_{k_1} \hat\vp_{j_1}\|_{L^\infty_\xi} \|\eta_2 \partial_{\eta_2} \hat{\vp}_{j_2}(\eta_2)\|_{L^2_{\eta_2}}\right]\\
& \qquad \cdot \Big\{(t+1)^{-1.5}\||\xi|^{1 + \alpha / 2}\hat{h}_{j_3}\|_{L^\infty_\xi} +(t+1)^{-1.75}\big[\||\px|^{3 \alpha / 4} P_{j_3}(x\px h)\|_{L^2}+\||\px|^{3 \alpha / 4} h_{j_3}\|_{L^2}  \big] \Big\}.
\end{align*}

The right-hand-side is summable with respect to $j_1, j_2, j_3$ under $|j_3-j_2|\leq 1$ and $|j_3-j_1|\leq 1$, since we can write
\begin{align*}
\begin{split}
& \||\xi|^{1 + \alpha / 2} \hat h_j\|_{L^\infty_\xi}\lesssim  (\mathbf1_{j\leq 0}2^{\alpha j / 2}+\mathbf1_{j>0}2^{(-r - 2 + \alpha / 2)j})\|h_j\|_{Z},
\end{split}
\end{align*}
and the resulting sum is integrable for $t\in (0,\infty)$.

\subsection{Resonant frequencies}
\label{8.5}

In this section, we estimate \eqref{resInt1} under the conditions that
\[
|j_1-j_3|\leq 1,\qquad |j_2-j_3|\leq 1,\qquad k_1 < \log_2[\varrho(t)], \qquad k_2 < \log_2[\varrho(t)],
\]
where $\varrho(t)$ is defined in \eqref{def_rho}, and sum the result over $k_1, k_2 < \log_2(\varrho(t))$.

The restriction of the sum to these dyadic blocks leads to a cut-off function in the integrand that is given by
\begin{align}
\label{defofcutoff}
\cutoff(\xi,\eta_1,\eta_2,t):=\psi\bigg(\frac{\eta_1 - \xi_1}{\varrho(t)}\bigg) \cdot \psi\bigg(\frac{\eta_2 - \xi_2}{\varrho(t)}\bigg).
\end{align}
The support of this cut-off function is
\[
\left\{(\eta_1,\eta_2)\in \R^2\mid |\eta_1 - \xi_1|<\frac85 \varrho(t), ~|\eta_2 - \xi_2\big|<\frac85 \varrho(t)\right\},
\]
which can be written as the union of four disjoint sets $A_1\cup A_2\cup A_3\cup A_4$, where
\begin{align*}
A_1&=\bigg\{(\eta_1,\eta_2)~~\bigg{|}~~ \bigg|\eta_1-\frac{\xi}3\bigg|<\frac85 \varrho(t),\quad \bigg|\eta_2-\frac{\xi}3\bigg|<\frac85 \varrho(t)\bigg\},\\
A_2&=\bigg\{(\eta_1,\eta_2)~~\bigg{|}~~ \big|\eta_1-\xi\big|<\frac85 \varrho(t),\quad \big|\eta_2-\xi\big|<\frac85 \varrho(t)\bigg\},\\
A_3&=\bigg\{(\eta_1,\eta_2)~~\bigg{|}~~\big|(\eta_1-\xi)\big|<\frac85 \varrho(t),\quad \big|\eta_2-(-\xi)\big|<\frac85 \varrho(t) \bigg\},\\
A_4&=\bigg\{(\eta_1,\eta_2)~~\bigg{|}~~\big|\eta_1-(-\xi)\big|<\frac85 \varrho(t),\quad \big|\eta_2-\xi\big|<\frac85 \varrho(t) \bigg\}.
\end{align*}
The regions $A_1$, $A_2$, $A_3$, $A_4$ are discs centered at $(\xi/3, \xi/3)$, $(\xi, \xi)$, $(\xi, -\xi)$, and $(-\xi, \xi)$, respectively. The region $A_1$ corresponds to space resonances $\xi = \xi/3 + \xi/3 + \xi/3$, while $A_2$, $A_3$, $A_4$ correspond to space-time resonances $\xi = \xi + \xi -\xi$.

\subsubsection{Space resonances}
For space-resonant frequencies with $(\eta_1,\eta_2)\in A_1$, we consider the time-integral of the cubic dyadic term \eqref{cubdyadic}
over $1\le \tau \le t$, write
\begin{align*}
e^{i A \tau \Phi(\xi, \eta_1,\eta_2)}=\frac{1}{i A \Phi(\xi, \eta_1,\eta_2)} \left[\partial_\tau e^{i\tau\Phi(\xi, \eta_1,\eta_2)}\right],
\end{align*}
and integrate by parts with respect to $\tau$, to get that
\[
\begin{aligned}
&\int_1^t i\xi\iint_{\R^2}  \Tb_1'(\eta_1, \eta_2, \xi - \eta_1 - \eta_2)e^{i A \tau \Phi(\xi,\eta_1,\eta_2)} \hat h_{j_1}(\eta_1, \tau)\hat h_{j_2}(\eta_2, \tau) \hat h_{j_3}(\xi-\eta_1-\eta_2, \tau) \cutoff(\xi,\eta_1,\eta_2,\tau)  \diff{\eta_1} \diff{\eta_2} \diff{\tau}\\
&\qquad =  \frac{1}{A} \bigg(J_1-\int_1^t J_2(\tau)+J_3(\tau) \diff{\tau}\bigg),
\end{aligned}
\]
where
\begin{align*}
J_1 &=  \iint_{\R^2}  \xi \frac{\Tb_1'(\eta_1, \eta_2, \xi - \eta_1 - \eta_2)}{\Phi(\xi, \eta_1,\eta_2)} \hat h_{j_1}(\eta_1, \tau)\hat h_{j_2}(\eta_2, \tau) \hat h_{j_3}(\xi-\eta_1-\eta_2, \tau)  e^{i A \tau \Phi(\xi, \eta_1,\eta_2)}\cutoff(\xi,\eta_1,\eta_2,\tau) \diff{\eta_1} \diff{\eta_2}\Big|_{\tau=1}^{\tau=t},\\[1ex]
J_2(\tau) &= \xi \iint_{\R^2} \frac{\Tb_1'(\eta_1, \eta_2, \xi - \eta_1 - \eta_2)}{\Phi(\xi, \eta_1,\eta_2)}  e^{i A \tau \Phi(\xi, \eta_1,\eta_2)} \partial_\tau \left[\hat h_{j_1}(\eta_1, \tau)\hat h_{j_2}(\eta_2, \tau) \hat h_{j_3}(\xi-\eta_1-\eta_2, \tau)\right]\cutoff(\xi,\eta_1,\eta_2,\tau) \diff{\eta_1} \diff{\eta_2},\\[1ex]
J_3(\tau)& = \xi\iint_{\R^2} \partial_\tau\cutoff(\xi,\eta_1,\eta_2,\tau) \frac{\Tb_1'(\eta_1, \eta_2, \xi - \eta_1 - \eta_2)}{\Phi(\xi, \eta_1,\eta_2)} e^{i A \tau \Phi(\xi, \eta_1,\eta_2)} \hat h_{j_1}(\eta_1, \tau)\hat h_{j_2}(\eta_2, \tau) \hat h_{j_3}(\xi-\eta_1-\eta_2, \tau)  \diff{\eta_1} \diff{\eta_2} .
\end{align*}

When $(\eta_1,\eta_2)\in A_1$, we can Taylor expand $\Tb_1' / \Phi$ around $(\xi, \xi/3, \xi/3)$ as
\begin{equation}\label{T1Phi}
\frac{\Tb_1'(\eta_1, \eta_2, \xi - \eta_1 - \eta_2)}{\Phi(\xi,\eta_1,\eta_2) }= \frac{3^{2 - \alpha} - 2^{3 - \alpha} + 1}{3 - 3^{2 - \alpha}} \xi + O\bigg(\left|\eta_1-\frac\xi3\right|^2+\left|\eta_2-\frac\xi3\right|^2\bigg).
\end{equation}
For $J_1$ and $1 \leq \tau \leq t$, we have from \eqref{T1Phi} that
\[
\begin{aligned}
&\bigg|(|\xi|+|\xi|^{r + 3})\iint_{\R^2} \cutoff(\xi,\eta_1,\eta_2,t)  \xi \frac{\Tb_1'(\eta_1, \eta_2, \xi - \eta_1 - \eta_2)}{\Phi(\xi, \eta_1,\eta_2)} \hat h_{j_1}(\eta_1, \tau)\hat h_{j_2}(\eta_2, \tau) \hat h_{j_3}(\xi-\eta_1-\eta_2, \tau)   e^{i A \tau \Phi(\xi, \eta_1,\eta_2)}\diff{\eta_1} \diff{\eta_2}\bigg|\\
&\lesssim \bigg|(|\xi|+|\xi|^{r + 3})\iint_{\R^2} \cutoff(\xi,\eta_1,\eta_2,t)  \xi^2 \hat h_{j_1}(\eta_1, \tau)\hat h_{j_2}(\eta_2, \tau) \hat h_{j_3}(\xi-\eta_1-\eta_2, \tau) e^{i\tau\Phi(\xi, \eta_1,\eta_2)} \diff{\eta_1} \diff{\eta_2}\bigg|\\
&\quad+ (|\xi|+|\xi|^{r + 3})\iint_{\R^2} \cutoff(\xi,\eta_1,\eta_2,t)   [\varrho(\tau)]^2 \left| \hat h_{j_1}(\eta_1, \tau)\hat h_{j_2}(\eta_2, \tau) \hat h_{j_3}(\xi-\eta_1-\eta_2, \tau) \right|\diff{\eta_1} \diff{\eta_2}\\
&\lesssim (\tau+1)^{2p_0+(r+2)p_1}\||\xi| \hat h_{j_1}\|_{L^\infty_\xi}\||\xi| \hat h_{j_2}\|_{L^\infty_\xi}\||\xi| \hat h_{j_3}\|_{L^\infty_\xi}\left([\varrho(\tau)]^{2}+[\varrho(\tau)]^{4}\right).
\end{aligned}
\]
If $(\eta_1,\eta_2) \in A_1$, then the number of terms in the sum over $j_1$, $j_2$, $j_3$ is of the order $\log(t+1)$, so the sum of right-hand side of this inequality over $A_1$ is bounded for all $\tau \ge 1$, and thus, in particular, taking $\tau = t$, and using the bootstrap assumptions we have
\[
|(|\xi| + |\xi|^{r + 3}) J_1| \lesssim \ve_1^3 \lesssim \ve_0,
\]

After taking the time derivative $\partial_\tau$, the term $J_2$ can be written as a sum of three terms:
\begin{align*}
&\xi \iint_{\R^2} \frac{\Tb_1'(\eta_1, \eta_2, \xi - \eta_1 - \eta_2)}{\Phi(\xi, \eta_1,\eta_2)}  e^{i A \tau \Phi(\xi, \eta_1,\eta_2)}  \left[\partial_\tau\hat h_{j_1}(\eta_1, \tau)\hat h_{j_2}(\eta_2, \tau) \hat h_{j_3}(\xi-\eta_1-\eta_2, \tau)\right]\cutoff(\xi,\eta_1,\eta_2,\tau) \diff{\eta_1} \diff{\eta_2},\\[1ex]
&\xi \iint_{\R^2} \frac{\Tb_1'(\eta_1, \eta_2, \xi - \eta_1 - \eta_2)}{\Phi(\xi, \eta_1,\eta_2)}  e^{i A \tau \Phi(\xi, \eta_1,\eta_2)}  \left[\hat h_{j_1}(\eta_1, \tau) \partial_\tau \hat h_{j_2}(\eta_2, \tau) \hat h_{j_3}(\tau, \xi-\eta_1-\eta_2)\right]\cutoff(\xi,\eta_1,\eta_2,\tau) \diff{\eta_1} \diff{\eta_2},
\\
&\xi \iint_{\R^2} \frac{\Tb_1'(\eta_1, \eta_2, \xi - \eta_1 - \eta_2)}{\Phi(\xi, \eta_1,\eta_2)}  e^{i A \tau \Phi(\xi, \eta_1,\eta_2)}  \left[\hat h_{j_1}(\eta_1, \tau)\hat h_{j_2}(\eta_2, \tau) \partial_\tau\hat h_{j_3}(\xi-\eta_1-\eta_2, \tau)\right]\cutoff(\xi,\eta_1,\eta_2,\tau) \diff{\eta_1} \diff{\eta_2}.
\end{align*}

From \eqref{phihateq}, we find that $\hat h(\xi, \tau) = e^{- i A \tau \xi |\xi|^{1 - \alpha}}\hat \vp(\xi, \tau)$ satisfies
\begin{align*}
\begin{split}
&\hat h_\tau(\xi, \tau) + i A' \xi\iint_{\R^2} \Tb_1'(\eta_1, \eta_2, \xi - \eta_1 - \eta_2) e^{i A \tau \Phi(\xi,\eta_1,\eta_2)} \hat h(\xi-\eta_1-\eta_2, \tau) \hat h(\eta_1, \tau)\hat h(\eta_2, \tau)  \diff{\eta_1} \diff{\eta_2}
\\
&\hskip2in+e^{- i A \tau \xi |\xi|^{1 - \alpha}}\widehat{\Nc_{\geq5}(\vp)}(\xi, \tau)=0,
\end{split}
\end{align*}
where $\Phi$ is given in \eqref{defPhi}.
Using this equation, the bootstrap assumptions, and Lemma \ref{sharp}, we have
\begin{align*}
\|\partial_\tau\hat h\|_{L^\infty_\xi}&\lesssim \|\xi\iint_{\R^2}\Tb_1'(\eta_1,\eta_2,\xi-\eta_1-\eta_2) e^{i A \tau \Phi(\xi,\eta_1,\eta_2)}\hat h(\xi-\eta_1-\eta_2)\hat h(\eta_1)\hat h(\eta_2)\diff\eta_1\diff\eta_2\|_{L^\infty_\xi}+\|\widehat{\mathcal N_{\geq 5}(\vp)}\|_{L^\infty_\xi}\\
&\lesssim\left\|\partial_x \bigg\{\varphi^2 |\partial_x|^{3 - \alpha} \varphi
- \varphi |\partial_x|^{3 - \alpha} (\varphi^2) + \frac 13 |\partial_x|^{3 - \alpha} (\varphi^3)\bigg\}\right \|_{L^1}+\|\mathcal N_{\geq 5}(\vp)\|_{L^1}\\
&\lesssim\|\vp\|_{H^s}^2 \cdot \sum\limits_{j=0}^{\infty} \|\vp_x\|_{W^{2,\infty}}^{2 j + 1}\\
&\lesssim \ve_1^3 (\tau + 1)^{2p_0-\frac12}.
\end{align*}
Therefore, using this estimate in the $J_2$-terms and the fact that $\ve_1^3 \lesssim \ve_0$, we get that
\[
\begin{aligned}
\left| (|\xi|+|\xi|^{r + 3}) J_2(\tau)\right|
\lesssim \sum \| h_{\ell_1}\|_{Z}\|\partial_\tau\hat h_{\ell_2}\|_{L^\infty_\xi}\| h_{\ell_3}\|_{Z}[\varrho(\tau)]^{2}
\lesssim \ve_0 (\tau + 1)^{p_0-\frac12}[\varrho(\tau)]^{2}\sum \| h_{\ell_1}\|_{Z}\| h_{\ell_3}\|_{Z},
\end{aligned}
\]
where the summation is taken over permutations $\ell_1$, $\ell_2$, $\ell_3$ of $j_1$, $j_2$, $j_3$ for $(\eta_1,\eta_2)$ in the space-resonance region $A_1$. Again, since the number of summations is of the order
$\log(\tau + 1)$, the resulting sum is integrable over $\tau \in (1, \infty)$.

As for the term $J_3$, by the definition of the function $\varrho$ in \eqref{def_rho}, we have
\[
\left|\partial_\tau\left[\psi_{\leq \log_2(\varrho(\tau))}(|\eta_1|-|\xi-\eta_1-\eta_2|) \cdot \psi_{\leq
\log_2(\varrho(\tau))}(|\eta_2|-|\xi-\eta_1-\eta_2|)\right] \right| \lesssim \varrho'(\tau)(\varrho(\tau))^{-1}\lesssim \frac{1}{\tau+1},
\]
and the area of the support of $\partial_\tau \cutoff$ is of the order of $[\varrho(\tau)]^2$.
Then, using \eqref{T1Phi}, we get that
\begin{align*}
\left|(|\xi|+|\xi|^{r + 3})J_3(\tau)\right|\lesssim (\tau+1)^{2p_0+(r+2)p_1-1} [\varrho(\tau)]^{2} \sum \|\xi\hat h_{\ell_1}\|_{L^\infty_\xi}\|\xi\hat h_{\ell_2}\|_{L^\infty_\xi}\|\xi\hat h_{\ell_3}\|_{L^\infty_\xi} \lesssim \ve_0 (\tau + 1)^{-1.5},
\end{align*}
where the summation is taken over permutations $\ell_1$, $\ell_2$, $\ell_3$ of $j_1$, $j_2$, $j_3$. This sum converges and is integrable for $\tau \in (1, \infty)$.

\subsubsection{Space-time resonances} \label{sec:spacetime}
We now use modified scattering to control the integral in $U_1$ in \eqref{defU1U2}, which corresponds to the regions of space-time resonances (cutoff by $\cutoff$, see \eqref{defofcutoff}). These terms are
\begin{align*}
A' \iint_{A_2\bigcup A_3\bigcup A_4}  &i \xi \cutoff(\xi,\eta_1,\eta_2,t) \Tb_1'(\eta_1, \eta_2, \xi - \eta_1 - \eta_2) \hat\vp_{j_1}(\eta_1)\hat\vp_{j_2}(\eta_2) \hat\vp_{j_3}(\xi-\eta_1-\eta_2) \diff{\eta_1} \diff{\eta_2}\\
& -i\xi \left[\beta_1(t)\Tb_1'(\xi, \xi, -\xi)+\beta_2(t)\Tb_1'(\xi, -\xi, \xi)+\beta_3(t)\Tb_1'(-\xi,\xi,\xi)\right]|\hat \vp(\tau,\xi)|^2\hat \vp(\tau,\xi).
\end{align*}

In the region $A_2$, we take
\[
\beta_1(t)= A' \iint_{A_2}\cutoff(\xi,\eta_1,\eta_2,t)\diff\eta_1\diff\eta_2.
\]
Then, using a Taylor expansion, we obtain that
\begin{align*}
& A' \bigg|(|\xi|+|\xi|^{r+3}) i\xi \iint_{A_2} \cutoff(\xi,\eta_1,\eta_2,t)\\
&\qquad \cdot \Big[\Tb_1'(\eta_1, \eta_2, \xi - \eta_1 - \eta_2) \hat\vp_{j_1}(\eta_1)\hat\vp_{j_2}(\eta_2) \hat\vp_{j_3}(\xi-\eta_1-\eta_2) - \Tb_1'(\xi, \xi, -\xi)|\hat \vp(\tau,\xi)|^2\hat \vp(\tau,\xi) \Big] \diff{\eta_1} \diff{\eta_2}\bigg|\\
\lesssim~& (|\xi|+|\xi|^{r+3}) |\xi| \iint_{A_2} \left|\partial_{\eta_1}\left[\Tb_1'(\eta_1, \eta_2, \xi - \eta_1 - \eta_2) \hat\vp_{j_1}(\eta_1)\hat\vp_{j_2}(\eta_2) \hat\vp_{j_3}(\xi-\eta_1-\eta_2)\right] \Big|_{\eta_1 = \eta_1'}(\xi-\eta_1)\right|\\
& \hspace {.5in}+\left|\partial_{\eta_2}\left[\Tb_1'(\eta_1, \eta_2, \xi - \eta_1 - \eta_2) \hat\vp_{j_1}(\eta_1)\hat\vp_{j_2}(\eta_2) \hat\vp_{j_3}(\xi-\eta_1-\eta_2)\right] \Big|_{\eta_2 = \eta_2'} (\xi-\eta_2)\right| \diff{\eta_1} \diff{\eta_2}\\
\lesssim~&(t+1)^{(r+2)p_1}  \|\xi\hat\vp_{j_1}\|_{L^\infty_\xi}\|\xi\hat\vp_{j_2}\|_{L^\infty_\xi}\|\xi\hat\vp_{j_3}\|_{L^\infty_\xi} [\varrho(t)]^{3}+ \sum\|\xi\hat\vp_{\ell_1}\|_{L^\infty_\xi}\|\xi\hat\vp_{\ell_2}\|_{L^\infty_\xi}\|\S\vp_{\ell_3}\|_{H^r}[\varrho(t)]^{5/2}\\
\lesssim ~& \ve_0 (t + 1)^{-1.1},
\end{align*}
where $\eta_1'$ (or $\eta_2'$) in the first inequality are some numbers between $\xi$ and $\eta_1$ (or $\eta_2$), and the summation in the last inequality is for permutations $\ell_1$, $\ell_2$, $\ell_3$ of $j_1$, $j_2$, and $j_3$. The estimates for the regions $A_3$ and $A_4$ follow by a similar argument.

Combining all the above estimates and taking the sum over $j_1, j_2, j_3$, we get that
\begin{equation}
\int_0^{\infty} \|(|\xi|+|\xi|^{r+3})U_1(\xi, t)\|_{L^\infty_\xi} \diff t\lesssim \ve_0.
\label{U1Z}
\end{equation}

\subsection{Higher-degree terms}
\label{higherorder}

In this subsection, we prove $\|(|\xi|+|\xi|^{r+3})\widehat{\mathcal{N}_{\geq5}(\vp)}\|_{L^\infty_\xi}$ is integrable in time. We start from proving the estimate for the symbol $\Tb_n$.

\begin{align*}
&\F^{-1}\left[  \Tb_n(\eta_1,\eta_2,\dotsc, \eta_{2n+1}) \psi_{j_1}(\eta_1)\psi_{j_2}(\eta_2)\dotsm\psi_{j_{2n+1}}(\eta_{2n+1})\right]\\
=~&\iiint_{\R^{2n+1}}e^{i(y_1\eta_1+y_2\eta_2+\dotsb+y_{2n+1}\eta_{2n+1})}   \bigg[\int_{\R}\frac{\prod_{j=1}^{2n+1}(1-e^{i\eta_j\zeta})}{|\zeta|^{2n + 2 - \alpha}} \diff \zeta\bigg]\psi_{j_1}(\eta_1)\psi_{j_2}(\eta_2) \dotsm \psi_{j_{2n+1}}(\eta_{2n+1}) \diff{\etab_n}\\
=~&\iiint_{\R^{2n+1}}  \bigg[\int_{\R}\frac{ (e^{iy_1\eta_1}-e^{i\eta_1(\zeta+y_1)})\dotsm(e^{iy_{2n+1}\eta_{2n+1}}-e^{i\eta_{2n+1}(\zeta+y_{2n+1})})}{|\zeta|^{2n + 2 - \alpha}}\diff \zeta\bigg]\psi_{j_1}(\eta_1)\dotsm\psi_{j_{2n+1}}(\eta_{2n+1}) \diff{\etab_n}\\
=~& \int_{\R}\frac{1 }{|\zeta|^{2n + 2 - \alpha}} \left[\F^{-1}[\psi_{j_1}](y_1)-\F^{-1}[\psi_{j_1}](\zeta+y_1)\right]\dotsm \left[\F^{-1}[\psi_{j_{2n+1}}](y_{2n+1})-\F^{-1}[\psi_{j_{2n+1}}](\zeta+y_{2n+1})\right] \diff \zeta.
\end{align*}
Then its $L^1$ norm is
\begin{align*}
&\left\|\F^{-1}\left[  \Tb_n(\eta_1,\eta_2,\dotsc, \eta_{2n+1}) \psi_{j_1}(\eta_1)\psi_{j_2}(\eta_2)\dotsm\psi_{j_{2n+1}}(\eta_{2n+1})\right]\right\|_{L^1}\\
\lesssim~&\int_{\R} 2^{j_1+\dotsb+j_{2n+1}}\frac1{|\zeta|^{2n + 2 - \alpha}}\min\{2^{-j_1},|\zeta|\}\min\{2^{-j_2},|\zeta|\}\dotsm\min\{2^{-j_{2n+1}},|\zeta|\}\diff\zeta\\
\end{align*}
Assume $\ell_1, \ell_2, \dotsc, \ell_{2n+1}$ is a permutation of $j_1, j_2, \dotsc, j_{2n+1}$ satisfying $2^{-\ell_1}\leq 2^{-\ell_2}\leq \dotsb \leq 2^{-\ell_{2n+1}}$, then we have
\begin{align*}
&\left\|\F^{-1}\left[  \Tb_n(\eta_1,\eta_2,\dotsc, \eta_{2n+1}) \psi_{j_1}(\eta_1)\psi_{j_2}(\eta_2)\dotsm\psi_{j_{2n+1}}(\eta_{2n+1})\right]\right\|_{L^1}\\
\lesssim~& \int_{|\zeta|> 2^{-\ell_{2n+1}}}\frac1{|\zeta|^{2n + 2 - \alpha}}\diff\zeta+\int_{2^{-\ell_{2n}}<|\zeta|<2^{-\ell_{2n+1}}}\frac{2^{\ell_1}}{|\zeta|^{2n + 1 - \alpha}}\diff\zeta\\
& \qquad +\dotsb+\int_{2^{-\ell_2}<|\zeta|<2^{-\ell_3}}\frac{2^{\ell_1+\dotsb+\ell_{2n - 1}}}{|\zeta|^{3 - \alpha}}\diff \zeta+\int_{|\zeta|<2^{-\ell_2}}\frac{2^{\ell_1+\dotsb+\ell_{2n}}}{|\zeta|^{2 - \alpha}}\diff \zeta\\
\lesssim~&2^{(2 - \alpha) \ell_2 + \ell_3 + \dotsb+\ell_{2n+1}}.
\end{align*}

Therefore, by Lemma \ref{multilinear}, we have
\begin{align*}
\left\|(|\xi|+|\xi|^{r+3})\widehat{\mathcal{N}_{\geq5}(\vp)}\right\|_{L^\infty_\xi}\lesssim& ~(t+1)^{(r+3)p_1}\|\mathcal{N}_{\geq 5}(\vp)\|_{L^1} \lesssim \|\vp\|_{H^1}^2\sum\limits_{n=2}^\infty \|\vp_x\|_{L^\infty}^{2n-1}.
\end{align*}
Using the dispersive estimate (Lemma~\ref{sharp}), we see the right-hand-side is integrable in $t$, and
\begin{equation}
\int_0^\infty\|(|\xi|+|\xi|^{r+3})\widehat{\mathcal N_{\geq5}(\vp)}\|_{L_\xi^\infty}\diff t\lesssim \ve_0.
\label{N5Z}
\end{equation}
Finally, the use of \eqref{U1Z} and \eqref{N5Z} in \eqref{vpZ} completes the proof of Lemma~\ref{nonlindisp}, and therefore the proof of
Theorem~\ref{main}.

\appendix
\section{Contour dynamics derivation for the GSQG front equation ($1 < \alpha < 2$)}
\label{app:contour}

In this appendix, we derive the GSQG front equation \eqref{GSQGfront0} for $1 < \alpha < 2$ by the method used in \cite{HSZ19pb} for SQG fronts.

The front-solutions considered here have unbounded velocity fields as $|y|\to\infty$, and we interpret \eqref{gsqg2} in a distributional sense.
Let $L_\alpha^1(\R^n)$ denote the space of measurable functions $f \colon \R^n \to \R$ such that
\[
\int_{\R^n} \frac{|f(\x)|}{1 + |\x|^{n + \alpha}} \diff{\x} < \infty.
\]
Then the fractional Laplacian $(-\Delta)^{\alpha / 2} \colon L_\alpha^1(\R^n) \to \mathcal{D}'(\R^n)$ can be defined by \cite{BB99}
\[
\left\langle(- \Delta)^{\alpha / 2} f, \phi\right\rangle = \int_{\R^n} f(\x) \cdot (- \Delta)^{\alpha / 2} \phi(\x) \diff{\x} \qquad \text{for all}\ \phi \in C_c^\infty(\R^n),
\]
where $(- \Delta)^{\alpha / 2} \phi$ is defined as a Fourier multiplier or singular integral \cite{Kwa17}. For $1 < \alpha < 2$, the only $\alpha$-harmonic solutions $f \in L_\alpha^1(\R^n)$ of $(- \Delta)^{\alpha / 2} f = 0$ are affine functions \cite{CDL15, Fall16}, so if we require that $\ub(\x)$ has sublinear growth in $\x$, then \eqref{gsqg2} determines $\ub$ uniquely up to a spatially uniform constant.
This constant can be removed by a transformation into a suitable reference frame, and for definiteness we set it equal to zero.

To start with, we consider the shear flow solutions $\bar{\ub}(y) = (U(y), 0)$
associated with a planar front
\begin{equation}
\label{thetabar}
\bar{\theta}(y) = \begin{cases}
\theta_+ \qquad \text{if $y > 0$},\\
\theta_- \qquad \text{if $y < 0$}.
\end{cases}
\end{equation}
In that case, \eqref{gsqg2} reduces to the equation
\begin{align}
\label{Ueqn}
|\partial_y|^\alpha U(y) = - \frac{\Theta}{g_\alpha}\delta(y),
\end{align}
where $\delta$ is the delta-distribution and $\Theta$, $g_\alpha$ are defined in \eqref{defA}.
Equation \eqref{Ueqn} has the sublinear solution
\begin{align}
\label{shear}
U(y) = \Theta C_\alpha' |y|^{\alpha - 1}, \qquad \bar{\ub}(y) = (U(y), 0),
\end{align}
where
\[
C_\alpha' = - \frac{1}{\sqrt{\pi}}\sin \left(\frac{\pi \alpha}{2}\right) \Gamma\left(\frac{1 - a}{2}\right) \Gamma\left(\frac{a}{2}\right).
\]

We now derive contour dynamics equations for GSQG ($1 < \alpha < 2$) front solutions \eqref{frontsol} whose velocity field has the asymptotic
behavior
\[
\ub(\x, t) = (\Theta C_\alpha' |y|^{\alpha - 1}, 0) + o(1) \quad \text{as}\ |y| \to \infty
\]
by decomposing the solutions into a planar shear flow and a perturbation whose velocity field approaches zero as $|y| \to \infty$.

We denote the front $y=\vp(x,t)$ by $\Gamma(t) = \partial\Omega(t)$, and consider its motion on a
time interval $0\le t \le T$ for some $T>0$. We assume that:
\begin{align*}
\begin{split}
&\text{(i) $\varphi(\cdot,t) \in C^{1}(\R)$ and $\varphi(x,t)$ is bounded
on $\R \times [0,T]$};
\\
&\text{(ii) $\varphi_x(x,t) = O(|x|^{- (\alpha - 1 + \beta)})$ as $|x|\to \infty$ for some $\beta > 0$}.
\end{split}
\end{align*}
In that case, all of the integrals in the following converge.

We choose $h > 0$ such that $- h < \inf \{\vp(x,t) : (x,t) \in \R\times[0,T]\}$, and let
\begin{align}
\label{thetatilde}
\tilde{\theta}(y) = \begin{cases} \theta_+ & \text{if $\ y > -h$},\\ \theta_- & \text{if $y < -h$},\end{cases},
\qquad
\tilde{\ub}(y) = \left(\Theta C_\alpha' |y + h|^{\alpha - 1}, 0\right),
\end{align}
be the planar front solution \eqref{thetabar}, \eqref{shear} translated to $y=-h$.

We decompose the front solution \eqref{frontsol} as
\begin{align*}
\theta(\x,t) = \tilde{\theta}(y) + \theta^*(\x,t),
\end{align*}
where $\tilde{\theta}$ is defined in \eqref{thetatilde}, and
\begin{align*}
 \theta^*(\x,t) = \begin{cases} - {\Theta}/{g_\alpha} &\text{if $-h < y < \vp(x,t)$},\\
 0 &\text{otherwise}.\end{cases}
 \end{align*}
We denote the support of $\theta^*(\cdot,t)$ by $\Omega^*(t)$.
The corresponding decomposition of the velocity field is
\begin{align*}
\ub(\x,t) = \tilde{\ub}(y) + \ub^*(\x,t),
\end{align*}
where $\tilde{\ub}$ is defined in \eqref{thetatilde}. By use of a Riesz potential representation, we find that $\ub^* = \nabla^\perp (- \Delta)^{- \alpha / 2} \theta^*$ is given by
\begin{align*}
\ub^*(\x,t) =
\Theta\, \mathrm{p.v.} \int_{\Omega^*(t)} \nabla_{\x'}^\perp \frac{1}{|\x - \x'|^{2 - \alpha}} \diff{\x'}.
\end{align*}
Writing $\x' = (x', y')$, we see that the integrand is $O\left(|x'|^{-(3 - \alpha)}\right)$ as
$|x'|\to \infty$ and compactly supported in $y'$, so this principal value integral converges absolutely at infinity.

Applying Green's theorem on a truncated region with $|x - x'| < \lambda$, and taking the limit $\lambda \to \infty$ (as in \cite{HSZ19pb}), we get that
\begin{align*}
\ub^*(\x, t) &= \left(u^*(x, y, t), v^*(x, y, t)\right),\\
u^*(x, y, t) &= - \Theta \int_\R \frac{1}{\big[(x - x')^2 + (y - \vp(x', t))^2\big]^{1 - \alpha / 2}} - \frac{1}{\big((x - x')^2 + (y + h)^2\big)^{1 - \alpha / 2}} \diff{x'},\\
v^*(x, y, t) &= - \Theta \int_\R \frac{\vp_{x'}(x', t)}{\big[(x - x')^2 + (y - \vp(x', t))^2\big]^{1 - \alpha / 2}} \diff{x'}.
\end{align*}
The integral for $u^*$ converges since the integrand is $O(|x'|^{3 - \alpha})$ as $|x'| \to \infty$, while the integral for $v^*$ converges since we assume that $\vp_{x'}(x', t) = O(|x'|^{- (\alpha - 1 + \beta)})$ as $|x'| \to \infty$ for some $\beta > 0$.

Let $\x = \left(x,\varphi(x,t)\right)$ be a point on the front and denote by
\begin{align*}
\vec{n}(\x,t) = \frac{1}{\sqrt{1+\varphi_x^2(x,t)}}(-\varphi_x(x,t),1)
\end{align*}
the unit upward normal to $\Gamma(t)$ at $\x$. The motion of the front is determined by the normal
velocity $\ub\cdot \n$, so the front $y = \vp(x, t)$ moves with the upward normal velocity
\[
\ub \cdot \n = \frac{\vp_t}{\sqrt{1 + \vp_x^2}}.
\]

Using the previous expressions for $\ub$, we therefore get that
\begin{align*}
\vp_t(x, t) & = \Theta \int_\R \frac{\vp_x(x, t) - \vp_{x'}(x', t)}{\big[(x - x')^2 + (y - \vp(x', t))^2\big]^{1 - \alpha / 2}} - \frac{\vp_{x}(x, t)}{\big[(x - x')^2 + (\vp(x, t) + h)^2\big]^{1 - \alpha / 2}} \diff{x'}\\*
& \qquad - \Theta C_\alpha' \left|\vp(x, t) + h\right|^{\alpha - 1} \vp_x(x, t)\\
& = \Theta(I_1(x, t) + I_2(x, t) + I_3(x, t)),
\end{align*}
where
\begin{align*}
I_1(x, t) &= \int_\R \frac{\vp_x(x, t) - \vp_{x'}(x', t)}{\big[(x - x')^2 + (y - \vp(x', t))^2\big]^{1 - \alpha / 2}} - \frac{\vp_x(x, t) - \vp_{x'}(x', t)}{|x - x'|^{2 - \alpha}} \diff{x'},\\
I_2(x, t) &= \int_\R \frac{\vp_x(x, t) - \vp_{x'}(x', t)}{|x - x'|^{2 - \alpha}} - \frac{\vp_x(x, t)}{[(x')^2 + 1]^{1 - \alpha / 2}} \diff{x'},\\
I_3(x, t) &= \vp_x(x, t) \bigg\{\int_\R \frac{1}{[(x')^2 + 1]^{1 - \alpha / 2}} - \frac{1}{\big[(x - x')^2 + (\vp(x, t) + h)^2\big]^{1 - \alpha / 2}}\diff{x'} - C_\alpha' \left|\vp(x, t) + h\right|^{\alpha - 1}\bigg\}.
\end{align*}

We can express $I_2$ as a Fourier multiplier. Note that $I_2=0$ if $\vp =1$, and
if $\vp(x) = e^{i\xi x}$ with $\xi\ne 0$, then
\begin{align*}
I_2(x) &= i\xi e^{i\xi x} \int_\R \left[ \frac{ 1- e^{i\xi(x'-x)}}{|x-x'|^{2 - \alpha}} - \frac{1}{((x')^2 + 1)^{\frac{2 - \alpha}{2}}} \right] \diff{x'}
\\
& = 2i\xi e^{i\xi x}\int_0^\infty\left[ \frac{ 1- \cos \xi s}{s^{2 - \alpha}} - \frac{ 1}{(s^2 + 1)^{\frac{2 - \alpha}{2}}} \right] \diff{s}.
\end{align*}
Using the identity
\begin{equation}
\int_0^\infty\left[\frac{ 1}{(s^2 + 1)^{\frac{2 - \alpha}{2}}} - \frac{1}{(s^2 + c^2)^{\frac{2 - \alpha}{2}}} \right] \diff{s} = \frac{\sqrt{\pi} \Gamma\left(\frac{1 - \alpha}{2}\right)}{2 \Gamma\left(1 - \frac{\alpha}{2}\right)} \left(1 - |c|^{\alpha - 1}\right),
\label{scaleid}
\end{equation}
with $c= 1/|\xi|$, the change of variable $s' = |\xi|s$, and identities for generalized hypergeometric functions, we get that
\begin{align*}
I_2(x)
& = - \frac{\sqrt{\pi} \Gamma\left(\frac{1 - \alpha}{2}\right)}{\Gamma\left(1 - \frac{\alpha}{2}\right)} i \xi e^{i \xi x} - i A \xi |\xi|^{1 - \alpha} e^{i \xi x},
\end{align*}
where $A$ is given in \eqref{defA}. It follows that
\[
I_2 =  - \frac{\sqrt{\pi} \Gamma\left(\frac{1 - \alpha}{2}\right)}{2 \Gamma\left(1 - \frac{\alpha}{2}\right)} \vp_x - A |\px|^{1 - \alpha} \vp_x.
\]
For $I_3$, we find after some algebra and the use of\eqref{scaleid} that
\[
I_3 = \frac{\sqrt{\pi} \Gamma\left(\frac{1 - \alpha}{2}\right)}{\Gamma\left(1 - \frac{\alpha}{2}\right)} \vp_x.
\]

Putting everything together, we get the contour dynamics equation for GSQG ($1 < \alpha < 2$) fronts
\begin{align*}
& \vp_t(x, t) + \Theta A |\px|^{1 - \alpha} \vp_x(x, t) - \Theta \int_\R \left\{\frac{\vp_x(x, t) - \vp_{x'}(x', t)}{\big[(x - x')^2 + (y - \vp(x', t))^2\big]^{1 - \alpha / 2}} - \frac{\vp_x(x, t) - \vp_{x'}(x', t)}{|x - x'|^{2 - \alpha}}\right\} \diff{x'} = 0,
\end{align*}
which is equivalent to \eqref{GSQGfront0}.
This equation also agrees with the result obtained in \cite{HS18} by a regularization method.

\end{document}